\newcommand{\C}{{\mathbb C}}
\newcommand{\Z}{{\mathbb Z}}
\newcommand\superpuesto[2]{\genfrac{}{}{0pt}{}{#1}{#2}}
\newcommand\diag{\operatorname{Diag}}
\newcommand\End{\operatorname{End}}
\newcommand\GL{{\operatorname{GL}}}
\newcommand\Hom{\operatorname{Hom}}
\newcommand\Id{\operatorname{Id}}
\renewcommand{\sl}{\mathfrak{sl}}
\newtheorem*{theo}{Theorem}
\numberwithin{equation}{section}
\newtheorem{defn}[equation]{Definition}
\newtheorem{thm}[equation]{Theorem}
\newtheorem{cor}[equation]{Corollary}
\newtheorem{lem}[equation]{Lemma}
\newtheorem{prop}[equation]{Proposition}
\theoremstyle{remark}
\newtheorem{rem}[equation]{Remark}
\theoremstyle{remark}
\begin{document}

\title[Construction of simple non-weight $\sl(2)$-modules]
{Construction of simple non-weight $\sl(2)$-modules of arbitrary rank}
\author[F. J. Plaza Mart\'{\i}n]{F. J. Plaza Mart\'{\i}n}
\author[C. Tejero Prieto]{C. Tejero Prieto}

\address{Departamento de Matem\'aticas AND IUFFyM, Universidad de
Salamanca,  Plaza de la Merced 1-4
        \\
        37008 Salamanca. Spain.
        \\
         Tel: +34 923294460. Fax: +34 923294583}
\date\today
\thanks{
       {\it 2010 Mathematics Subject Classification}:  17B68 (Primary) 81R10  (Secondary). \\
\indent {\it Key words}:   $\sl(2)$-representations, simple modules, non-weight modules\\
\indent This work is supported by the research contracts MTM2013-45935-P and
MTM2012-32342 of Ministerio de Ciencia e Innovaci\'{o}n, Spain.  \\
}
\email{fplaza@usal.es}\email{carlost@usal.es}

\begin{abstract}
We study simple non-weight $\sl(2)$-modules which are finitely generated as $\C[z]$-modules. We show that they are described in terms of semilinear endomorphisms and prove that the Smith type induces a stratification on the set of these $\sl(2)$-modules, providing thus new invariants. Moreover, we show that there is a notion of duality for these type of $\sl(2)$-modules. Finally, we show that there are  simple non-weight $\sl(2)$-modules of arbitrary rank by constructing a whole new family of them. 
\end{abstract}

\maketitle
\setcounter{tocdepth}{1}

\section{Introduction}

The study of simple $\sl(2)$-modules can be divided into two big classes: weight modules and non-weight modules. While simple weight modules have been exhaustively classified and explicitly described (e.g. \cite{Mazor}), the study of simple non-weight modules is much more difficult and subtle (\cite{Block, Bavula2}, see also \cite{Arnal, Bavula1}). In particular, Bavula proved  (\cite{Bavula1, Bavula2}) that there exists a bijection between simple non-weight modules and irreducible elements of a certain non-abelian euclidean algebra. Nevertheless, the lack of an explicit construction or classification of such irreducible elements has prevented the construction of large families of  simple non-weight modules. To our best knowledge, in the literature there are only a few explicit examples of simple non-weight modules: those obtained by Bavula (\cite[Corollary 3.9 b]{Bavula1}, \cite[Corollary 2, pag. 1046]{Bavula2}),  which generalize the Arnal-Pinczon series (\cite{Arnal}) and include as  a particular case the Whittaker modules (\cite{Kostant}), and the modules built by Puninski{\u\i} (\cite[Proposition 1]{Puninskii}). With the exception of Whittaker modules, these simple non-weight $\sl(2)$-modules are not finitely generated with respect to their natural $\C[z]$-module structure.

In this paper we endeavour to study the class of  simple non-weight $\sl(2)$-modules that are finitely generated  as $\C[z]$-modules and we provide an explicit construction of a large family of them. 
Let us briefly describe our main results.

Let $\sl(2)$ be the Lie algebra of the Lie group $\operatorname{SL}(2,\C)$. We have set $\C$ as the base field but everything admits a straightforward generalization to an arbitrary algebraically closed field of characteristic $0$. Let $\{e,f,h\}$  be a Chevalley basis satisfying the commutation relations:
    \begin{equation}\label{eq:basis-sl2}
    [e,f]=h\; , \qquad
    [h,e]=2e \; , \qquad
    [h,f]=-2f\; .
    \end{equation}
Let $V$ be an  $\sl(2)$-module. We say that $V$ is a Casimir module of semi-level $\mu$ if the Casimir operator acts by the homothety $(2\mu+1)^2\in\C $ with ${\mathfrak{Re}}(\mu)\geq -\frac12$. 

Every $\sl(2)$-module $V$ has a natural $\C[z]$-module structure where $z$ acts by $-\frac12 h$.  
As a matter of convention in this paper we say that an $\sl(2)$-module $V$ is torsion free or finitely generated if it is so with respect to its natural  $\C[z]$-module structure.  

As a first result, we prove that any simple non-weight $\sl(2)$-module $V$ is a finite rank torsion free module with respect to its natural $\C[z]$-module structure, see Theorem \ref{thm:dichotomy} and Lemma \ref{lem:simpletorsionfree-finiterank}. This implies that any such $V$ is a $\C[z]$-submodule of a finite dimensional $\C(z)$-vector space. Therefore, one is naturally led in this way to consider rational $\sl(2)$-modules; that is, $\sl(2)$-representations on finite dimensional vector spaces over the field $\C(z)$ of rational functions. Moreover, if in addition we assume that the non-weight $\sl(2)$-module $V$ is a finitely generated $\C[z]$-module then we prove that it is a polynomial $\sl(2)$-module; that is, there is an isomorphism $V\simeq \C[z]^n$ as $\C[z]$-modules for a certain integer $n$.


More precisely, the relevance of polynomial representations is unveiled by Theorem~\ref{thm:simpletorsionfreefingen-finiterankfree} which proves  that

	{\small $$
	\left\{\begin{gathered}
	\text{Simple non-weight finitely}
	\\
	\text{generated $\sl(2)$-modules}
	\end{gathered}\right\}
	\,\subseteq\, 
	\coprod_{\superpuesto{\mu\in\C}{\frak{Re}(\mu)\geq -\frac12}} 
	\left\{\begin{gathered}
	\text{Polynomial Casimir}
	\\
	\text{$\sl(2)$-modules of }
	\text{semi-level $\mu$}
	\end{gathered}\right\}.
	$$}Therefore, the study of simple non-weight finitely generated  $\sl(2)$-modules reduces to the study of the sets $\sl(2)_\mu\mathrm{-Mod}(V)$ of polynomial Casimir representations of semi-level $\mu$, where $V$ is a free $\C[z]$-module of rank $n$.

This set is described in terms of semilinear endomorphisms of $V$, i.e. those $\varphi\in\End_{\C}(V)$ such that $\varphi(z\cdot v)=(z+1) \varphi(v)$, as follows

\begin{theo}[see Theorem~\ref{thm:polynomialmuCasimir} for the precise statement]
Let $V$ be a rank $n$ free $\C[z]$-module. There is an identification 
	{\small $$
	\sl(2)_\mu\mathrm{-Mod}(V)\,\simeq\, \left\{
	\begin{gathered}
	\varphi\in \End_{\C}(V) \text{ s.t. $\varphi$ is semilinear and the}
	\\
	\text{$n$-th invariant  factor of $\varphi $ divides $(z+\mu)(z-\mu-1)$}
	\end{gathered}\right\}	
	$$ }
\end{theo}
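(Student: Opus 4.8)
The plan is to show that a polynomial Casimir $\sl(2)$-module structure of semi-level $\mu$ on $V$ is completely determined by a single operator, the action of $e$, and that the divisibility condition in the statement is exactly the obstruction to rebuilding the whole structure from that operator while staying inside the lattice $V$. Throughout, write $q(z):=(z+\mu)(z-\mu-1)$.

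First I would rewrite the Lie algebra relations in terms of semilinear maps. Let $E,F,H$ be the operators by which $e,f,h$ act on $V$. Since $z$ acts as $-\tfrac12 h$, one has $H=-2z$ automatically; and for a $\C$-linear endomorphism of a $\C[z]$-module on which $h$ acts as $-2z$, a short computation shows that $[h,e]=2e$ is equivalent to $E$ being semilinear, $[h,f]=-2f$ is equivalent to $F(z\cdot v)=(z-1)F(v)$, and $[e,f]=h$ reads $EF-FE=-2z\cdot\Id$. Plugging these relations into the explicit form of the Casimir element, one finds that it acts by the homothety $(2\mu+1)^2$ if and only if $FE$ is multiplication by $-q(z)$. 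Moreover this condition on $FE$ already implies the rest: as $q(z)$ is a nonzero polynomial, $FE$ has nonzero determinant, so $\det E\neq0$ and, on $V\otimes_{\C[z]}\C(z)$, $F$ must equal $-q(z)\cdot E^{-1}$; semilinearity of $E$ then makes $EF$ equal multiplication by $-q(z+1)=-(z+\mu+1)(z-\mu)$, and $(-q(z+1))-(-q(z))=-2z$, so $EF-FE=-2z\cdot\Id$ indeed holds. In summary, a polynomial Casimir $\sl(2)$-module of semi-level $\mu$ on $V$ is the same datum as a semilinear $\varphi:=E$ such that the operator $-q(z)\cdot\varphi^{-1}$, which a priori lives only on $V\otimes_{\C[z]}\C(z)$ and there coincides with $F$, actually restricts to an endomorphism of $V$.

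It remains to recognize the latter condition as the stated divisibility, which will yield the bijection. Fixing a $\C[z]$-basis of $V$, $\varphi$ is described by a matrix $\Phi\in M_n(\C[z])$ (with the relevant variable shift built in, $\varphi$ being only semilinear), and $-q(z)\cdot\varphi^{-1}$ restricts to $V$ if and only if $q(z)$ is divisible by every invariant factor of $\Phi$; since the invariant factors form a divisibility chain, this holds precisely when the $n$-th (largest) one divides $q(z)=(z+\mu)(z-\mu-1)$. Thus passing from a module to $\varphi:=E$ lands in the right-hand set. Conversely, given a semilinear $\varphi$ whose $n$-th invariant factor divides $q(z)$ --- in particular that invariant factor, hence $\det\Phi$, is nonzero --- I would set $E:=\varphi$, $H:=$ multiplication by $-2z$, and $F:=-q(z)\cdot\varphi^{-1}$: the Smith normal form criterion makes $F$ a genuine element of $\End_{\C}(V)$, the identity $F(z\cdot v)=(z-1)F(v)$ holds because $F$ is the composite of a $\C[z]$-linear operator with $\varphi^{-1}$, and $FE$ is multiplication by $-q(z)$; by the first step these data define an $\sl(2)$-module structure on $V$ compatible with $z=-\tfrac12 h$ on which the Casimir acts by $(2\mu+1)^2$, i.e. a polynomial Casimir module of semi-level $\mu$. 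The two assignments are visibly mutually inverse.

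The step I expect to be the main obstacle is the invariant-factor bookkeeping. One must first check that ``the $n$-th invariant factor of $\varphi$'' is well defined, i.e. independent of the chosen basis: a change of basis by $P\in\GL_n(\C[z])$ transforms $\Phi$ not by an ordinary Smith equivalence but by a twisted one of the form $\Phi\mapsto P(z)^{-1}\Phi(z)P(z\pm1)$, and one must verify --- for instance by expanding $k\times k$ minors via the Cauchy--Binet formula --- that the determinantal divisors, hence the invariant factors, are nonetheless preserved (here $\det P\in\C^{\times}$, so the top one is preserved up to a unit). Secondly, one must carry out with care the passage between a semilinear operator, the $\C[z]$-linear map attached to it after the variable shift, and the Smith normal form of that map, keeping precise track of whether it is $q(z)$ or its shift $q(z+1)$ that governs each divisibility; this is exactly what identifies the action of $e$ as the operator $\varphi$ and produces the quadratic $(z+\mu)(z-\mu-1)$ appearing in the statement.
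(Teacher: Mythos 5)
Your proposal is correct and follows essentially the same route as the paper: reduce the module structure to a single semilinear operator via the Casimir identities (the paper's Lemmas~\ref{lem:A1}--\ref{lem:A1(z)}), observe that the commutator relation is automatic, and convert the requirement that $\pi_\mu(z)\varphi^{-1}$ preserve the lattice $V$ into the divisibility of $\pi_\mu$ (suitably shifted) by the $n$-th invariant factor via the Smith normal form, checking basis-independence as in Lemma~\ref{lem:invariantfactor}. The only differences are notational --- you use $E=\rho(e)=-\rho(L_1)$ rather than $\rho(L_1)$ itself, and your bookkeeping lands on $\pi_\mu(z)$ versus the $\pi_\mu(z+1)$ of the precise Theorem~\ref{thm:polynomialmuCasimir}, a discrepancy you correctly flag as a matter of which shifted matrix one attaches to the semilinear map.
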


We will also show that this set admits a natural stratification in terms of the Smith type of $\varphi $ (see Proposition~\ref{prop:SmithTypesStrat}). Moreover, we will see that there is a natural duality operation for polynomial Casimir representations and we will describe how the Smith types change under this duality.

In the last section we give an explicit construction of simple non-weight $\sl(2)$-modules of arbitrary finite rank as $\C[z]$-modules. More explictly, if $\mathbb A:= U/U(C-(2\mu+1)^2),$ where $U:=U(\sl(2))$ is the universal enveloping algebra of $\sl(2)$ and $C$ is the Casimir operator and $\mu\in\C$ with ${\mathfrak{Re}}(\mu)\geq -\frac12$, then one has:

\begin{theo}[see Theorem~\ref{thm:newsimpletorsionfree}]
Let $\alpha= X^n -   p(z)X^{n-1} - a_0 $ with  $p(z)\in\C[z]$, $a_0\in\C\setminus\{0\}$ and 
$\deg(p(z))\geq 1$. 

Then ${\mathbb A}/({\mathbb A}\alpha)$ is a simple non-weight $\sl(2)$-module of rank $n$ and semi-level $\mu$. 
\end{theo}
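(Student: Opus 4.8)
The plan is to realize $\mathbb{A}/(\mathbb{A}\alpha)$ as an explicit polynomial Casimir module and then check simplicity directly. First I would fix the standard presentation in which $\mathbb{A}$ is, as a $\C[z]$-module, free with basis given by powers of $e$ (or $f$), using the PBW theorem for $U(\sl(2))$ modulo the Casimir relation; here $z=-\tfrac12 h$ and the semilinear operator $\varphi$ attached to the module by the Theorem on $\sl(2)_\mu\mathrm{-Mod}(V)$ is essentially multiplication by $e$ (up to the identification of the excerpt), which indeed satisfies $\varphi(z\cdot v)=(z+1)\varphi(v)$ because $[h,e]=2e$. The element $\alpha = X^n - p(z)X^{n-1} - a_0$ should be interpreted as a monic degree-$n$ "polynomial in $e$ with coefficients in $\C[z]$" (with $X$ playing the role of $e$), so that $\mathbb{A}/(\mathbb{A}\alpha)$ is a free $\C[z]$-module of rank $n$ with basis $1, e, \dots, e^{n-1}$, and the operator $\varphi$ acts on this basis by the companion-type matrix of $\alpha$. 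One then reads off that $\varphi$ is semilinear and computes its $n$-th invariant factor; the point of the hypotheses $a_0\neq 0$ and $\deg p\ge 1$ is exactly to force this invariant factor to be a suitable divisor of $(z+\mu)(z-\mu-1)$ (in fact, generically, to be that whole product or $1$), so that by the cited Theorem the module is a genuine polynomial Casimir $\sl(2)$-module of semi-level $\mu$ and rank $n$. This first part is essentially bookkeeping with the PBW basis and the matrix of multiplication by $e$.

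Next I would prove simplicity. The cleanest route is: since $\mathbb{A}/(\mathbb{A}\alpha)$ is a cyclic left $\mathbb{A}$-module, its submodules correspond to left ideals of $\mathbb{A}$ containing $\mathbb{A}\alpha$, equivalently to left divisors of $\alpha$ in the (left) Euclidean-like domain obtained from $\mathbb{A}$; so I want to show $\alpha$ is "irreducible" in the appropriate sense, i.e. admits no proper factorization $\alpha = \beta\gamma$ with $\beta,\gamma$ of positive $X$-degree and $\gamma$ monic. Concretely, suppose $W\subsetneq \mathbb{A}/(\mathbb{A}\alpha)$ is a nonzero submodule. As a $\C[z]$-submodule of a free rank-$n$ module it is again free of some rank $m$ with $0<m\le n$; I would argue that $W$ is $\varphi$-stable and hence, via the structure theorem describing $\sl(2)_\mu\mathrm{-Mod}$ in terms of $\varphi$, corresponds to a $\varphi$-invariant subspace, and then show that the companion matrix of $\alpha$ — precisely because its bottom row contains the unit constant $a_0$ and the only other nonzero entry is $p(z)$ of degree $\ge 1$ — admits no nontrivial invariant $\C[z]$-submodule of lower rank that is a free direct summand. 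The arithmetic condition $a_0\neq 0$ guarantees that the action of $e$ (resp.\ $f$) cannot drop rank, and $\deg p\ge 1$ breaks the commutativity that would otherwise allow a splitting; combining these rules out any factorization of $\alpha$ into lower-degree pieces respecting the semilinear structure.

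Finally I would verify the qualifier \emph{non-weight}: a weight module would be, over $\C[z]=\C[-\tfrac12 h]$, a torsion module (a sum of generalized eigenspaces of $h$), but $\mathbb{A}/(\mathbb{A}\alpha)\simeq\C[z]^n$ is torsion-free of positive rank, so it cannot be a weight module; alternatively one notes $h$ acts without eigenvectors. Then semi-level $\mu$ and rank $n$ are immediate from the construction: the Casimir acts by $(2\mu+1)^2$ by definition of $\mathbb{A}$, and the rank is the $X$-degree $n$ of $\alpha$ by the PBW basis count.

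The main obstacle I expect is the simplicity argument: translating "no proper submodule" into a statement about the (noncommutative) factorization of $\alpha$ in $\mathbb{A}$ and then genuinely using both hypotheses $a_0\neq 0$ and $\deg p\ge 1$ to exclude all such factorizations. The semilinear/companion-matrix reformulation from the earlier theorems should make this tractable — one is really asking that a specific $n\times n$ semilinear operator over $\C[z]$ be "cyclic and indecomposable" in the relevant category — but pinning down exactly which $\C[z]$-submodules can be $\varphi$-invariant, and checking that the invariant-factor condition plus the shape of $\alpha$ leaves none that are proper, is where the real work lies. The rank and semi-level claims, by contrast, are essentially formal consequences of the PBW basis and the defining relation of $\mathbb{A}$.
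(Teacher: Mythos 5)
Your setup matches the paper's: Proposition~\ref{prop:submodule} shows $\mathbb A/(\mathbb A\alpha)$ is free of rank $n$ over $\C[z]$, the action of $X$ is the companion matrix of $\alpha$ composed with $\nabla$ (Proposition~\ref{prop:irred-rest}), the semi-level is built into the definition of $\mathbb A$, and non-weight follows from torsion-freeness via Theorem~\ref{thm:dichotomy}. Those parts of your proposal are correct and are indeed ``essentially bookkeeping.''

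The genuine gap is the simplicity argument, which is the entire content of the theorem and which you defer rather than prove. Saying that ``$a_0\neq 0$ guarantees the action cannot drop rank'' and ``$\deg p\ge 1$ breaks the commutativity that would otherwise allow a splitting'' is not an argument: you need a concrete mechanism that converts the hypothesis $\deg p(z)\ge 1$ into the nonexistence of a proper $\varphi$-invariant $\C[z]$-submodule, and no such mechanism appears in your sketch. The paper supplies two nontrivial ingredients here. First, Proposition~\ref{prop:Smithtype1-submodule}: if $V$ has Smith type $S_0(n,0)$ (which holds here since the companion matrix has constant nonzero determinant $\pm a_0$), then any $\sl(2)$-submodule $V'\simeq\C[z]^k$ also has Smith type $S_0(k,0)$, so $\det(A'_1(z))=\lambda\in\C^*$; this is proved by a gcd-of-minors/Cauchy--Binet argument on the intertwining identity $A_1(z)B(z+1)=B(z)A'_1(z)$, where $B(z)$ is the matrix of the inclusion. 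Second, and this is the real work, one takes the $k$-th exterior power of that identity and analyzes the degrees $d_I=\deg b_I(z)$ of the $k\times k$ minors of $B(z)$, indexed by multi-indices $I$, using the explicit combinatorics of how $\wedge^k A_1(z)$ permutes basis vectors (the maps $\sigma$ and $\tau$ in the paper). The relation $\pm a_0\,b_{\sigma^{-1}(I)}(z+1)+p(z)b_{\tau^{-1}(I)}(z+1)=\lambda b_I(z)$, together with $\deg p\ge 1$, forces a minor of maximal degree to sit at $I=(1,\dots,k-1,n)$ only when $k=n$, whence $V'=V$. Nothing in your proposal substitutes for this degree-chasing. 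A further caution: your proposed reduction of simplicity to ``$\alpha$ admits no proper factorization $\alpha=\beta\gamma$'' is only valid in the localized algebra $\mathbb B$, which is a principal left ideal domain; $\mathbb A$ itself is not, so left ideals of $\mathbb A$ containing $\mathbb A\alpha$ need not be principal, and even for irreducible $\alpha$ the module $\mathbb A/\mathbb A\alpha$ can have the torsion submodule $E_\alpha$ of Proposition~\ref{prop:Bavula1}. That route would therefore require proving irreducibility of $\alpha$ in $\mathbb B$ (with rational coefficients allowed in the factors) plus vanishing of $E_\alpha$, which is not easier than the direct submodule analysis the paper carries out.
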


We finish this introduction by summarizing the contents of the paper. In Section \ref{sec:simplenonweight} we study the main properties of simple non-weight $\sl(2)$-modules. Polynomial Casimir representations are studied in detail in Section \ref{sec:polynomial}, whereas in Section \ref{sec:duality} we show that there is a duality for them and determine explicitly the correspondence of irreducible elements of the euclidean algebra $\mathbb A$ under this duality.  Section \ref{subsec:polynomialreprank1} is devoted to the complete determination of polynomial Casimir representations of rank 1. Finally, in Section \ref{sec:family} we carry out  the construction of a family of simple non-weight $\sl(2)$-modules of arbitrary finite rank.

\section{Simple non-weight $\sl(2)$-Modules}\label{sec:simplenonweight}

Recall that  $\sl(2)$ is a simple Lie algebra and a Chevalley basis for it consists of a basis $\{e,f,h\}$  satisfying the commutation relations~\eqref{eq:basis-sl2}. For the purposes of this paper, it will be more convenient to consider the basis $\{L_{-1}:=f, L_0:-\frac12h ,  L_1:=-e\}$.

We will find  a large class of simple non-weight modules, formed by the representations of $\sl(2)$ on finite rank modules over a polynomial ring in one variable and the class of representations on finite dimensional vector spaces over the field of rational functions. Our main reference for the theory of non-weight $\sl(2)$-modules will be  the papers of Bavula \cite{Bavula1, Bavula2} on the classification of simple $\sl(2)$-modules which extend previous work by Block \cite{Block}, see also the recent book by Mazorchuk \cite{Mazor}.


The Casimir operator can be expressed in the following equivalent ways
    \begin{equation}\label{align:casimir} 
    \begin{aligned}
    C &\,=\, (h+1)^2+4fe \,=\,  
    \\
    &\,=\, 
    4\big( (L_0-\frac{1}{2})^2-L_{-1}L_1 \big) 
    \,=\, 4\big( (L_0+\frac{1}{2})^2-L_1L_{-1}\big).
    \end{aligned}
    \end{equation}

\begin{defn} We say that an $\sl(2)$-module $V$ is a Casimir module if $C$ acts on $V$ by a constant.
\end{defn}

As a consequence of the Schur lemma, every simple $\sl(2)$-module is a Casimir module \cite[Thm. 4.7]{Mazor}. Taking into account the structure of the primitive spectrum of $\sl(2)$, it is customary to write the constant value that $C$ takes on a Casimir module $V$ as $(\lambda+1)^2$ for some $\lambda\in\C$ and in this case we say that $V$ is a $\lambda$-level Casimir module, or equivalently a $-(\lambda+2)$-level Casimir module. If one desires to associate a unique $\lambda$, then one may restrict to $\frak{Re}(\lambda)\geq -1$.


As examples of Casimir modules let us mention the dense modules as well as its submodules and quotients. In particular, the $n$-dimensional simple module $\mathbf{V}^{(n)}$ is a Casimir module of level  $(n-1)$, and the the Verma module $M(\lambda)$ and the anti-Verma module $\bar M(\lambda)$ are Casimir of level $\lambda$ and $\lambda-2$, respectively. Note that Casimir modules need not be simple and, thus, dense modules are also instances of Casimir modules.  

Every $\sl(2)$-module $V$ has a natural $\C[z]$-module structure, where $z$ acts on $V$ by $L_0$. That is, given a representation $\rho:\sl(2)\to \End_{\C}(V)$, then $z\cdot v:= \rho(L_0)(v)$, for every $v\in V$. Let us consider now the $\C$-algebra automorphism $\nabla\colon \C[z]\to \C[z]$ such that $\nabla(z)=z+1$.

\begin{defn}\label{defn:semilinear} Let $V$ be a $\C[z]$-module and $ k\in\Z$. We denote by $\End_k(V)$ the $\C[z]$-module of $\nabla^k$-semilinear endomorphisms of $V$; i.e., 
$$\End_k(V)\,:=\,
\big\{ \varphi \in \End_{\C}(V) \text{ s.t. }\varphi (z\cdot v)=\nabla^k(z)\cdot \varphi(v)=(z+k)\cdot \varphi(v)\big\} .$$ 
\end{defn}

\begin{rem}Notice that an $\sl(2)$-module structure on $V$ consists of a $\C[z]$-module structure and two $\C[z]$-semilinear endomorphisms 	
	$$\rho(L_{-1})\in \End_{-1}(V),\quad \rho(L_1)\in \End_{1}(V),$$ 
that satisfy $ [\rho(L_{-1}),\rho(L_1)]=-2z.$ In what follows we think of every $\sl(2)$-module in this way.  Given such a $\rho$ defined on a $\C[z]$-module $V$, we denote by $V_\rho$ the corresponding $\sl(2)$-module.
\end{rem}

In this way we  consider the category $\sl(2)\mathrm{-Mod}$ of $\sl(2)$-modules as a subcategory of $\C[z]\mathrm{-Mod}$. It follows that the inclusion functor $\sl(2)\mathrm{-Mod}\hookrightarrow \C[z]\mathrm{-Mod}$ is faithful and therefore for any pair $V, V'$ of $\sl(2)$-modules one has $$\Hom_{\sl(2)}(V,V')\subset \Hom_{\C[z]}(V,V').$$ In particular if $V, V'$ are isomorphic $\sl(2)$-modules then they are isomorphic considered as $\C[z]$-modules. Therefore an isomorphism class of $\sl(2)$-modules underlies an isomorphism class of $\C[z]$-modules. Hence it is natural to consider the space $\sl(2)\mathrm{-Mod}(V)$ of $\sl(2)$-module structures defined on a $\C[z]$-module $V$. Now one has that two representations $\rho, \rho'\in \sl(2)\mathrm{-Mod}(V)$ define isomorphic $\sl(2)$-modules, $V_\rho\simeq V_{\rho'}$, if and only if there exists $\varphi\in \GL_{\C[z]}(V)$ such that $\rho'(A)=\varphi\circ\rho(A)\circ\varphi^{-1}$ for every $A\in\sl(2)$.

Casimir modules of level $\lambda$ form a particular class of $\C[z]$-modules endowed with two $\C[z]$-semilinear endomorphisms as above fulfilling
    \begin{align*}
    \rho(L_{-1})\circ\rho(L_1)  &=\left(z-\frac{1}{2}\right)^2-\left(\frac{\lambda}{2}+\frac{1}{2}\right)^2, \\
    \rho(L_1)\circ\rho(L_{-1}) &= \left(z+\frac{1}{2}\right)^2-\left(\frac{\lambda}{2}+\frac{1}{2}\right)^2.
    \end{align*} 
The commutation relation, $ [\rho(L_{-1}),\rho(L_1)]=-2z$, follows automatically from these expressions. We  simplify the writing by putting $\mu=\frac{\lambda}{2}$ and refer to it as the semi-level of the Casimir module. We will assume that $\frak{Re}(\mu)\geq -\frac12$.


Defining the polynomial 
	$$\pi_\mu(z)=\left(z-\frac{1}{2}\right)^2-\left(\mu+\frac{1}{2}\right)^2=(z+\mu)(z-\mu-1)\in\C[z],$$
 we have 
\begin{equation}\label{eq:casimir-module}
\rho(L_{-1})\circ\rho(L_1) =\pi_\mu(z)\Id_V, \quad \rho(L_1)\circ\rho(L_{-1}) =\pi_\mu(z+1)\Id_V.
\end{equation}

\begin{defn} We say that an $\sl(2)$-module is torsion free (finitely generated) if it is torsion free (finitely generated) when considered as a $\C[z]$-module.
\end{defn}

One has the following dichotomy for simple modules.

\begin{thm}{\cite[Thm. 6.3]{Mazor}}\label{thm:dichotomy} A simple $\sl(2)$-module is either a weight module or a torsion free module.
\end{thm}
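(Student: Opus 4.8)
The plan is to isolate, inside an arbitrary $\sl(2)$-module $V$, two canonical $\C[z]$-subspaces and to show that both are in fact $\sl(2)$-submodules, after which the dichotomy falls out of simplicity. Let
$$T(V):=\{v\in V\ :\ p(z)\cdot v=0\text{ for some }p\in\C[z]\setminus\{0\}\}$$
be the torsion submodule and let
$$W:=\sum_{\alpha\in\C}\Ker(z-\alpha)\subseteq V$$
be the sum of the eigenspaces of $z=L_0$ (the candidate weight-space decomposition). Clearly $W\subseteq T(V)$. I would prove that $W$ and $T(V)$ are $\sl(2)$-submodules, and then argue: by simplicity $T(V)$ is either $0$, in which case $V$ is torsion free, or all of $V$; in the latter case I will show $W\neq 0$, so that simplicity forces $W=V$, whence $z$ --- equivalently $h$ --- acts diagonalizably and $V$ is a weight module.

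The heart of the argument is the stability of $W$ and $T(V)$ under $\rho(L_{-1})$ and $\rho(L_1)$, and this is exactly where the semilinearity encoded in Definition~\ref{defn:semilinear} is used. Since $\rho(L_{\pm1})\in\End_{\pm1}(V)$, an easy induction on the degree gives $\rho(L_{\pm1})(p(z)\cdot v)=p(z\pm1)\cdot\rho(L_{\pm1})(v)$ for every $p\in\C[z]$. Hence if $(z-\alpha)\cdot v=0$ then $(z-(\alpha\mp1))\cdot\rho(L_{\pm1})(v)=\rho(L_{\pm1})\big((z-\alpha)\cdot v\big)=0$, so $\rho(L_{\pm1})(v)$ is again an eigenvector with shifted eigenvalue; this proves $\rho(L_{\pm1})(W)\subseteq W$, and since $z\cdot W\subseteq W$ is obvious, $W$ is an $\sl(2)$-submodule. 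The same identity shows that if $p(z)\cdot v=0$ with $p\neq 0$ then $p(z\pm1)\cdot\rho(L_{\pm1})(v)=0$ with $p(z\pm1)\neq 0$, so $\rho(L_{\pm1})(v)\in T(V)$ and $T(V)$ is an $\sl(2)$-submodule as well.

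It then remains to record the elementary fact that a nonzero torsion $\C[z]$-module has a nonzero eigenvector for $z$: choosing $0\neq v$ with monic minimal annihilator $p$ of degree $\geq 1$ and factoring $p=(z-\alpha)q$, the vector $q(z)\cdot v$ is nonzero by minimality of $p$ and satisfies $(z-\alpha)\cdot\big(q(z)\cdot v\big)=p(z)\cdot v=0$. With this in hand the casework above is immediate. I do not expect a serious obstacle in this argument; the only point requiring care is the bookkeeping of the eigenvalue and annihilator shifts under the semilinear operators $\rho(L_{\pm1})$, which is precisely what the $\End_k(V)$ formalism of the paper is designed to streamline. The one potentially delicate case, $W=0$ together with $T(V)=V$, is ruled out exactly by the eigenvector existence statement, so that the two submodules genuinely separate the simple modules into the two announced classes.
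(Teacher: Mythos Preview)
The paper does not supply a proof of this theorem; it is quoted verbatim from Mazorchuk's book and used as a black box. Your argument is correct and is in fact the standard one (and essentially what one finds in \cite{Mazor}): the semilinearity relations $\rho(L_{\pm1})\in\End_{\pm1}(V)$ force the sum of $L_0$-eigenspaces to be an $\sl(2)$-submodule, and over the algebraically closed field $\C$ any nonzero torsion element produces an eigenvector. One minor simplification: the detour through $T(V)$ is unnecessary, since the eigenvector-existence step already shows directly that $W=0$ implies $V$ is torsion free; thus the single submodule $W$ suffices to run the dichotomy.
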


Casimir (simple) $\sl(2)$-modules of semi-level $\mu$ are exactly the (simple) modules over the generalized Weyl algebra  $$\mathbb A:= U/U(C-(2\mu+1)^2),$$ where $U:=U(\sl(2))$ is the universal enveloping algebra of $\sl(2)$, see for instance \cite[Chapter 6]{Mazor}. Moreover, $\mathbb A$ is a $\Z$-graded algebra $$\mathbb A=\bigoplus_{i\in\Z} \mathbb A_i,$$ with $\mathbb A_0=\C[z]$, $\mathbb A_{-i}=\mathbb A_0\cdot (L_{-1})^i$, $\mathbb A_{i}=\mathbb A_0\cdot (L_{1})^i$, for $i>0$.

The description given by Bavula \cite{Bavula1, Bavula2} of all simple $\sl(2)$-modules is based on the euclidean algebra $\mathbb B$ of skew Laurent polynomials over the field of rational fractions $\C(z)$ defined by the extension of the automorphism $\nabla$. Following the standard notation, we write   $\mathbb B=\C(z)[X,X^{-1};\nabla]$ whose product is determined  by the condition
	$$X^i\cdot \xi(z)=\nabla^i(\xi(z))\cdot X^i=\xi(z+i)\cdot X^i,$$
for every $\xi(z)\in\C(z)$ and every $i\in\Z$. This is an Euclidean algebra with respect to the length function
	$$\operatorname{length}_X(\sum_{i=m}^n a_i(z)X^i)\,:=\,  n-m\quad\text{ if }a_n(z), a_m(z)\neq 0. $$
and the following result is well known. 

\begin{prop}\label{prop:simpleB=B/Ba} The algebra $\mathbb B$ is both a left and a right principal ideal domain. Every simple $\mathbb B$-module is isomorphic to one of the form $\mathbb B/(\mathbb B\alpha)$ for some irreducible element $\alpha\in\mathbb B$.
\end{prop}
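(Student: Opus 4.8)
The plan is to prove both assertions at once by showing that $\mathbb B$ is a noncommutative Euclidean domain with respect to $\operatorname{length}_X$; this is the skew-Laurent analogue of Ore's classical results on $K[X;\sigma,\delta]$, so I do not expect any essential difficulty. Concretely, I would establish (a) that $\mathbb B$ is a domain with $\operatorname{length}_X$ additive, (b) a two-sided division algorithm, hence the left and right PID property, and (c) the description of simple modules as $\mathbb B/(\mathbb B\alpha)$.

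For (a) and (b): if $a=\sum_{i=m}^{n}a_i(z)X^i$ and $b=\sum_{j=p}^{q}b_j(z)X^j$ have top/bottom coefficients $a_n,a_m,b_q,b_p\neq 0$, then the leading term of $ab$ is $a_n(z)\nabla^{n}(b_q(z))X^{n+q}$ and its lowest term is $a_m(z)\nabla^{m}(b_p(z))X^{m+p}$; since $\C(z)$ is a field and $\nabla\in\Aut(\C(z))$, these coefficients are nonzero, so $ab\neq 0$ and $\operatorname{length}_X(ab)=\operatorname{length}_X(a)+\operatorname{length}_X(b)$. In particular the nonzero elements of length $0$ are precisely the units $c(z)X^k$ with $c(z)\in\C(z)\setminus\{0\}$, $k\in\Z$. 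Next, given $a\in\mathbb B$ and $0\neq b\in\mathbb B$, I would lower the top and bottom $X$-degrees of $a$ by repeatedly subtracting monomials $c(z)X^{k}b$ on the left (resp. $b\,c(z)X^{k}$ on the right): invertibility of leading coefficients in the field $\C(z)$ together with invertibility of the substitution $\nabla$ makes each one-term equation solvable, and the process terminates because $\operatorname{length}_X$ strictly decreases. This yields $a=q_\ell b+r_\ell=b q_r+r_r$ with $r_\ell,r_r$ either $0$ or of length $<\operatorname{length}_X(b)$, so every nonzero left (resp. right) ideal is generated by any of its nonzero elements of minimal length; hence $\mathbb B$ is a left and right principal ideal domain.

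For (c): let $M$ be a simple $\mathbb B$-module and choose $0\neq m\in M$. By simplicity $\mathbb B m=M$, so $M\simeq \mathbb B/I$ with $I:=\operatorname{Ann}_{\mathbb B}(m)$ a maximal left ideal. By the previous step $I=\mathbb B\alpha$ for some $\alpha\in\mathbb B$, and $\alpha$ is not a unit because $M\neq 0$. If $\alpha=\beta\gamma$ with both $\beta,\gamma$ non-units, then $\operatorname{length}_X(\gamma)<\operatorname{length}_X(\alpha)$, so $\mathbb B\alpha\subsetneq\mathbb B\gamma$, while $\mathbb B\gamma\subsetneq\mathbb B$ since $\gamma$ is not a unit; this contradicts the maximality of $I$. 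Hence $\alpha$ is irreducible and $M\simeq\mathbb B/(\mathbb B\alpha)$. The only mildly technical point in the whole argument is the two-sided division algorithm, but it is the verbatim noncommutative analogue of the Euclidean algorithm for Laurent polynomials over a field, so I expect it to present no real obstacle; accordingly this should be presented as a sketch with a pointer to the literature.
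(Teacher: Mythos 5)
Your proof is correct, and it follows exactly the route the paper intends: the paper introduces $\operatorname{length}_X$, asserts that $\mathbb B$ is Euclidean with respect to it, and then states the proposition as well known without further proof. Your write-up simply supplies the standard details (additivity of length, the two-sided division algorithm, generation of ideals by elements of minimal length, and irreducibility of a generator of a maximal left ideal), and each step checks out — in particular the division step is sound because subtracting $c(z)X^{n-q}b$ lowers the top degree without lowering the bottom degree whenever $\operatorname{length}_X(a)\geq\operatorname{length}_X(b)$, so the length strictly decreases.
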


If one considers the multiplicative subset $S=\C[z]\setminus \{0\}$, then $\mathbb A$ embeds naturally in the localization $S^{-1}\mathbb A$ and there is a natural identification $\mathbb B\simeq S^{-1}\mathbb A$ such that $X=L_{1}$, $X^{-1}=\frac1{\pi_\mu(z)} L_{-1}$. In what follows we identify  $\mathbb A$ with its image inside $\mathbb B$. 


\begin{prop}[{\cite[Props. 3, 7]{Bavula2}}] \label{prop:Bavula1} Let $V$ be a simple torsion free $\sl(2)$-module of semi-level $\mu$. Then:
\begin{enumerate}\item $S^{-1}V$ is a simple $\mathbb B$-module that contains the $\mathbb A$-module $V$.
\item $V\simeq V_\alpha:= \mathbb A/(\mathbb A\cap\mathbb B\alpha)$ for some irreducible element $\alpha \in\mathbb B$, which we assume to belong to the subalgebra $\mathbb A^+:=\oplus_{i\geq 0}\mathbb A_i$.
\end{enumerate}
Moreover, for any irreducible $\alpha\in\mathbb A$, the sequence of $\mathbb A$-modules:
$$0\to E_\alpha :=(\mathbb A\cap \mathbb B\alpha)/(\mathbb A\alpha) \to \mathbb A/(\mathbb A\alpha)\to V_\alpha\to 0$$ is exact,  and $E_\alpha$ is a torsion $\sl(2)$-module of finite length.

\end{prop}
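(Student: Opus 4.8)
The plan is to handle the three assertions in turn: (1) is a soft localization argument, (2) combines it with the principal‑ideal‑domain structure of $\mathbb B$ furnished by Proposition~\ref{prop:simpleB=B/Ba}, and the ``moreover'' part requires a concrete look at the left ideal $\mathbb A\alpha\subseteq\mathbb A$. For (1): since $V$ is torsion free over $\C[z]$ the map $V\to S^{-1}V$ is injective, so I regard $V$ as an $\mathbb A$-submodule of $S^{-1}V$, which is a module over $S^{-1}\mathbb A=\mathbb B$. To see $S^{-1}V$ is simple over $\mathbb B$, let $W$ be a nonzero $\mathbb B$-submodule and pick a nonzero $w=v/s\in W$ with $v\in V$, $s\in S$; as $s$ is a unit of $\mathbb B$, $v=sw$ is a nonzero element of the $\mathbb A$-submodule $W\cap V$ of the simple $\mathbb A$-module $V$, so $W\cap V=V$; since $W$ is closed under the invertible action of $S$, $S^{-1}V=S^{-1}(W\cap V)\subseteq W$, whence $W=S^{-1}V$. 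This is (1).

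For (2): by Proposition~\ref{prop:simpleB=B/Ba}, $S^{-1}V\simeq\mathbb B/\mathbb B\alpha$ with $\alpha\in\mathbb B$ irreducible; being simple, $S^{-1}V$ is generated over $\mathbb B$ by any nonzero vector, so I choose such a generator $w$ lying in $V$. Then $\mathbb A w$ is a nonzero $\mathbb A$-submodule of the simple module $V$, hence $\mathbb A w=V$, and therefore
$$
V=\mathbb A w\simeq \mathbb A/\operatorname{Ann}_{\mathbb A}(w)=\mathbb A/\bigl(\mathbb A\cap\operatorname{Ann}_{\mathbb B}(w)\bigr)=\mathbb A/(\mathbb A\cap\mathbb B\alpha)=V_\alpha,
$$
the middle step because $\mathbb A\subseteq\mathbb B$, and $\alpha$ is irreducible in $\mathbb B$ since $\mathbb B/\mathbb B\alpha\simeq S^{-1}V$ is simple. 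To place $\alpha$ in $\mathbb A^+$ I would replace it by $c(z)X^{-m}\alpha$: left multiplication by the unit $X^{-m}$ removes the negative $X$-powers, a suitable $c(z)\in S$ clears the remaining denominators, and dividing out the polynomial content makes the result primitive; none of these alters $\mathbb B\alpha$, hence none alters $\mathbb A\cap\mathbb B\alpha$ or $V_\alpha$. Passing finally to Bavula's reduced representative in $\mathbb A^+$ — the one whose constant term is nonzero, so that no left factor of $L_1$ can be split off — puts $\alpha$ in the required form, with $V_\alpha\simeq V$ and $\alpha$ irreducible as asserted.

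For the ``moreover'' part: for any $\alpha\in\mathbb A$ one has $\mathbb A\alpha\subseteq\mathbb A\cap\mathbb B\alpha\subseteq\mathbb A$, so the three‑term sequence is exact by the very definitions of $V_\alpha$ and $E_\alpha$. Localizing, $S^{-1}(\mathbb A\alpha)=\mathbb B\alpha$, while $\mathbb B\alpha=S^{-1}(\mathbb A\alpha)\subseteq S^{-1}(\mathbb A\cap\mathbb B\alpha)\subseteq S^{-1}\mathbb A\cap S^{-1}\mathbb B\alpha=\mathbb B\alpha$, so $S^{-1}(\mathbb A\cap\mathbb B\alpha)=S^{-1}(\mathbb A\alpha)$ and hence $S^{-1}E_\alpha=0$: thus $E_\alpha$ is a torsion $\C[z]$-module (indeed exactly the torsion submodule of $\mathbb A/\mathbb A\alpha$, since a class $\bar x$ is killed by some $0\ne p(z)$ iff $x\in S^{-1}(\mathbb A\alpha)\cap\mathbb A=\mathbb B\alpha\cap\mathbb A$). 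As $\mathbb A$ is left Noetherian (a quotient of $U(\sl(2))$), $E_\alpha\subseteq\mathbb A/\mathbb A\alpha$ is finitely generated over $\mathbb A$; equivalently $\mathbb A\cap\mathbb B\alpha=\mathfrak I\alpha$ with $\mathfrak I:=\{b\in\mathbb B:b\alpha\in\mathbb A\}\supseteq\mathbb A$ a finitely generated left $\mathbb A$-module and $E_\alpha\cong\mathfrak I/\mathbb A$.

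The one genuinely non‑formal point, and the step I expect to be the main obstacle, is that $E_\alpha$ has \emph{finite length}, i.e.\ is finite‑dimensional over $\C$; here one must use that $\alpha$ has positive $X$-length $N\ge1$ and nonzero constant term $a_0$. My plan is to work in the $\Z$-graded $\C[z]$-basis $\{t_i\}_{i\in\Z}$ of $\mathbb A$ with $t_i=L_1^i$ for $i\ge0$ and $t_i=L_{-1}^{-i}$ for $i<0$, in which right multiplication by $\alpha$ takes a banded echelon form: for $|i|$ large, $t_i\alpha$ is a $\C[z]$-combination of $t_i,\dots,t_{i+N}$ whose $t_i$-coefficient is $a_0(z+i)$ and whose $t_{i+N}$-coefficient is a nonzero polynomial. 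From this I would deduce that $\mathbb A\alpha$ and $\mathbb A\cap\mathbb B\alpha$ already agree outside a bounded range of degrees — the only source of their difference being the product of $\pi_\mu$-factors relating $L_{-1}^m$ to $X^{-m}$, which for large $m$ is absorbed by the lower‑order coefficients of $\alpha$ — so $\mathfrak I/\mathbb A$ is concentrated in finitely many $X$-degrees, and in each such degree it is a finitely generated torsion $\C[z]$-module, hence finite‑dimensional over $\C$. Summing over these finitely many degrees yields $\dim_{\C}E_\alpha<\infty$, so $E_\alpha$ is of finite length as an $\sl(2)$-module.
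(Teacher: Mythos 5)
The paper offers no proof of this proposition --- it is quoted from Bavula --- so there is nothing internal to compare against; I will assess your argument on its own terms. Parts (1) and (2), the exactness of the sequence, and the fact that $E_\alpha$ is torsion are handled correctly by your localization arguments (modulo the small point that $\operatorname{Ann}_{\mathbb B}(w)$ is a priori $\mathbb B\alpha'$ for some irreducible $\alpha'$ with $\mathbb B/\mathbb B\alpha'\simeq\mathbb B/\mathbb B\alpha$, after which you rename; and that the normalizations placing $\alpha$ in $\mathbb A^+$ are left multiplications by units of $\mathbb B$, which indeed do not change $\mathbb B\alpha$).

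The genuine gap is the finite--length assertion, which you yourself flag as the main obstacle and for which you only give a plan. Two things go wrong with that plan. First, it aims at the stronger conclusion $\dim_\C E_\alpha<\infty$, and this is false for general irreducible $\alpha\in\mathbb A$: take $\alpha=z\alpha_0$ with $\alpha_0=X-1$ (so $E_{\alpha_0}=0$, since $\mathbb A/\mathbb A\alpha_0$ is a free $\C[z]$-module by Proposition~\ref{prop:submodule}); then $\alpha$ is still irreducible in $\mathbb B$ and lies in $\mathbb A^+$, but $E_\alpha=(\mathbb A\cap\mathbb B\alpha_0)/\mathbb A z\alpha_0=\mathbb A\alpha_0/\mathbb A z\alpha_0\simeq\mathbb A/\mathbb A z\simeq\bigoplus_{i\in\Z}\C[z]/(z+i)$, an infinite--dimensional weight module of finite length. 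This also refutes the intermediate claim that $\mathfrak I/\mathbb A$ is ``concentrated in finitely many $X$-degrees'': in this example the discrepancy is nonzero in every degree. Second, your banded--echelon analysis presupposes a normalized $\alpha$ with nonzero constant term $a_0\in\C^*$, but unlike $V_\alpha$, the module $E_\alpha$ depends on $\mathbb A\alpha$ and not merely on $\mathbb B\alpha$, so you cannot reduce the ``moreover'' statement (which is for \emph{any} irreducible $\alpha\in\mathbb A$) to the normalized case. The correct route to finite length is structural rather than dimensional: $E_\alpha$ is a finitely generated $\mathbb A$-module (as you note, by Noetherianity), it is $\C[z]$-torsion, and it is annihilated by $C-(2\mu+1)^2$; hence it is a generalized weight module with finite--dimensional generalized weight spaces supported on finitely many $\Z$-cosets and fixed central character, and any such module has finite length. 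Replacing your final paragraph by an argument of this kind would complete the proof.
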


There is a natural functor \cite{Bavula1, Bavula2}, \cite[Sect. 6.3]{Mazor} of extension of scalars $$F=\C(z)\otimes_{\C[z]}(-)\colon \sl(2)_\mu\mathrm{-Mod}\to\mathbb B\mathrm{-Mod}.$$ In particular one has the following classification result.

\begin{thm}[{\cite[Thm. 6.24]{Mazor}}]\label{thm:isom-classes} The functor  $F$ induces a bijection, $\widehat F$, from the set of isomorphism classes of simple torsion free Casimir $\sl(2)$-modules of semi-level $\mu$ to the set of isomorphism classes of simple $\mathbb B$-modules.
\end{thm}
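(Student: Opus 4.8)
The plan is to prove that $\widehat F$ is a well-defined map on isomorphism classes which is both injective and surjective. Well-definedness is almost immediate from the results already quoted: for a simple torsion free Casimir module $V$ of semi-level $\mu$, Proposition~\ref{prop:Bavula1}(1) guarantees that $S^{-1}V=F(V)$ is a simple $\mathbb B$-module, and since $F$ is a functor it sends isomorphic $\sl(2)$-modules to isomorphic $\mathbb B$-modules. Thus $[V]\mapsto[F(V)]$ is a well-defined map into isomorphism classes of simple $\mathbb B$-modules, and it remains to establish bijectivity.

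For injectivity the key observation is the following uniqueness statement, which I would isolate as a lemma: if $W$ is a simple $\mathbb B$-module and $P,Q\subseteq W$ are two simple $\mathbb A$-submodules with $S^{-1}P=S^{-1}Q=W$, then $P=Q$. Indeed, $P\cap Q$ is an $\mathbb A$-submodule of each; if it is nonzero then simplicity forces $P=P\cap Q=Q$, while if $P\cap Q=0$ then $P$ embeds into $W/Q$, whose localization $S^{-1}(W/Q)=W/W$ vanishes, so $P$ is $S$-torsion --- impossible for a nonzero submodule of the $\C(z)$-vector space $W$. Granting this, suppose $F(V)\simeq F(V')$ and fix a $\mathbb B$-isomorphism $\Phi\colon S^{-1}V\to S^{-1}V'$. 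By Proposition~\ref{prop:Bavula1}(1) the natural maps $V\hookrightarrow S^{-1}V$ and $V'\hookrightarrow S^{-1}V'$ are injective, so $\Phi(V)$ and $V'$ are two simple $\mathbb A$-submodules of the simple $\mathbb B$-module $S^{-1}V'$, both with full localization; the lemma gives $\Phi(V)=V'$, whence $V\simeq V'$.

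For surjectivity, given a simple $\mathbb B$-module $W$, Proposition~\ref{prop:simpleB=B/Ba} writes $W\simeq\mathbb B/(\mathbb B\alpha)$ for some irreducible $\alpha$. Multiplying $\alpha$ on the left by a suitable power of $X$ and by a common denominator --- both of which are units of $\mathbb B$ and hence do not change $\mathbb B\alpha$ --- I may assume $\alpha\in\mathbb A^+$. I then propose to show that $V_\alpha:=\mathbb A/(\mathbb A\cap\mathbb B\alpha)$ is the required preimage. That $F(V_\alpha)\simeq W$ is a routine Ore computation: since $\alpha\in\mathbb A$ one has $S^{-1}(\mathbb A\cap\mathbb B\alpha)=\mathbb B\alpha$ (any $b\alpha\in\mathbb B\alpha$ can be written $s^{-1}(a\alpha)$ with $a\in\mathbb A$, and then $a\alpha\in\mathbb A\cap\mathbb B\alpha$), so by exactness of localization $S^{-1}V_\alpha=\mathbb B/(\mathbb B\alpha)\simeq W$. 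Consequently $V_\alpha$ is torsion free, being embedded into $S^{-1}V_\alpha$, and it is a Casimir module of semi-level $\mu$ because it is an $\mathbb A$-module.

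The one substantial point --- and the step I expect to be the main obstacle --- is that $V_\alpha$ is \emph{simple}. Using the exact sequence $0\to E_\alpha\to\mathbb A/(\mathbb A\alpha)\to V_\alpha\to 0$ of Proposition~\ref{prop:Bavula1}, in which $E_\alpha$ is the $S$-torsion submodule, a proper nonzero submodule of $V_\alpha$ would have full localization (being a nonzero $\mathbb B$-submodule of the simple $S^{-1}V_\alpha$) and would therefore produce a nonzero $S$-torsion quotient of $V_\alpha$; by Theorem~\ref{thm:dichotomy} the simple subquotients of such a quotient are weight modules. Thus simplicity of $V_\alpha$ is equivalent to the assertion that $V_\alpha$ admits no nonzero weight-module quotient. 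I would prove this by exploiting the generalized Weyl algebra structure of $\mathbb A$: a lowest- or highest-weight vector in such a quotient, pulled back to $W$ and propagated by the semilinear operators $\rho(L_{\pm1})$ --- which shift the $z$-weight by $\mp1$ and satisfy $\rho(L_{-1})\rho(L_1)=\pi_\mu(z)\Id$ --- would span a proper nonzero $\mathbb B$-submodule of the finite-dimensional space $W$, contradicting its simplicity, the remaining dense case being excluded by a rank count. Equivalently, and perhaps more cleanly, one may recast the whole theorem as the statement that $F$ realizes the Ore localization as a Serre quotient $\mathbb B\mathrm{-Mod}\simeq\sl(2)_\mu\mathrm{-Mod}/\mathcal T$ by the subcategory $\mathcal T$ of torsion (equivalently, weight) modules, so that simple objects correspond; in that language the simplicity of $V_\alpha$ is subsumed in the assertion that this quotient functor is an equivalence.
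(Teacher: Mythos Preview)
The paper does not prove this theorem: it is quoted from \cite[Thm.~6.24]{Mazor} and used as input, so there is no proof in the paper to compare your proposal against. What you have written is essentially a reconstruction of the argument behind that citation.

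Your well-definedness and injectivity are correct, and the uniqueness lemma (two simple $\mathbb A$-submodules of a simple $\mathbb B$-module with full localization must coincide) is a clean formulation with a valid torsion argument. For surjectivity you correctly isolate the real content: the simplicity of $V_\alpha=\mathbb A/(\mathbb A\cap\mathbb B\alpha)$. Your reduction is sound --- any proper nonzero submodule $M$ has $S^{-1}M=S^{-1}V_\alpha$, so $V_\alpha/M$ is $S$-torsion, and since $V_\alpha$ is cyclic a maximal such $M$ yields a simple weight quotient. But the sketch you give to rule this out does not work as stated: a highest-weight vector $\bar v$ in the quotient lifts to $v\in V_\alpha\subset W$ with $\rho(L_1)v\in M$, not $\rho(L_1)v=0$, so ``propagating by $\rho(L_{\pm1})$'' does not produce a proper $\mathbb B$-submodule of $W$; and ``the dense case excluded by a rank count'' is too vague to assess. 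The Serre-quotient reformulation is the right conceptual picture, but invoking it does not bypass the verification --- establishing that the localization functor identifies $\mathbb B$-modules with the quotient of $\sl(2)_\mu\mathrm{-Mod}$ by torsion modules already contains the statement that $V_\alpha$ has no nonzero torsion quotient, which is precisely what you still have to prove. In Bavula's and Mazorchuk's treatments this step uses the explicit $\mathbb Z$-graded structure of $\mathbb A$ and a direct analysis of which elements of $\mathbb B\alpha$ lie in $\mathbb A$; your proposal would need a comparable argument to close the gap.
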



\begin{defn} 
Let $V$ be a $\sl(2)$-module. We say that $V$ is a polynomial (resp. rational) $\sl(2)$-module or that we have a polynomial (resp. rational) representation of $\sl(2)$ if there is an isomorphism of $\C[z]$-modules $V\simeq \C[z]^m$ (resp. $V\simeq \C(z)^m$). In this case, we say that $V$ has rank $m$. 
\end{defn}

\begin{thm}\label{thm:simpletorsionfreefingen-finiterankfree} 
Every simple torsion free finitely generated $\sl(2)$-module can be obtained as a polynomial  Casimir representation of $\sl(2)$. 
\end{thm}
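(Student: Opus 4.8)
The plan is to deduce the statement directly from the structure theory of finitely generated modules over the principal ideal domain $\C[z]$, together with Schur's lemma. Let $V$ be a simple torsion free finitely generated $\sl(2)$-module; by the conventions fixed above this means that $V$, viewed with its natural $\C[z]$-module structure (with $z$ acting by $L_0$), is torsion free and finitely generated.

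First I would record that $V$ is automatically a Casimir module: since $V$ is simple, Schur's lemma forces the Casimir operator $C$ to act on $V$ by a scalar, as already recalled in the text (\cite[Thm.~4.7]{Mazor}); writing that scalar as $(2\mu+1)^2$ with ${\mathfrak{Re}}(\mu)\geq -\frac12$ assigns to $V$ a semi-level $\mu$. Next, since $\C[z]$ is a principal ideal domain and $V$ is finitely generated and torsion free over it, the structure theorem for finitely generated modules over a PID yields a $\C[z]$-module isomorphism $V\simeq\C[z]^m$ for some integer $m$; moreover $m\geq 1$, since a simple module is nonzero. Transporting the $\sl(2)$-action of $V$ along such an isomorphism equips $\C[z]^m$ with an $\sl(2)$-module structure, that is, a polynomial representation of $\sl(2)$ of rank $m$, which is isomorphic to $V$ as an $\sl(2)$-module and which is a Casimir module of semi-level $\mu$ because $V$ is. Hence $V$ is obtained as a polynomial Casimir representation, as claimed.

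I do not expect any genuine obstacle here: the only points requiring care are bookkeeping ones, namely that ``finitely generated $\sl(2)$-module'' refers throughout to the underlying $\C[z]$-module, so that the PID structure theorem indeed applies, and that torsion-freeness is precisely what eliminates the torsion summand a general finitely generated $\C[z]$-module would carry, leaving a free module of finite rank. The role of the statement is thus to package simple torsion free finitely generated $\sl(2)$-modules as polynomial Casimir representations, thereby reducing their study to that of the sets $\sl(2)_\mu\mathrm{-Mod}(V)$ with $V$ free of finite rank, as used in the sequel.
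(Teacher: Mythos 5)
Your proof is correct and follows essentially the same route as the paper: both reduce to the fact that a finitely generated torsion-free module over the PID $\C[z]$ is free of finite rank, with Schur's lemma supplying the Casimir structure. The only (harmless) difference is that the paper first invokes Lemma~\ref{lem:simpletorsionfree-finiterank} to get finite rank via the $\mathbb B$-module theory, a step your argument makes unnecessary since finite generation is already assumed.
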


For the proof, we need the following two Lemmas. 

\begin{lem}\label{lem:findim} Any simple $\mathbb B$-module is a finite dimensional vector space over the field of rational functions $\C(z)$.
\end{lem}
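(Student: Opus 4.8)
The plan is to exploit the structure of $\mathbb B=\C(z)[X,X^{-1};\nabla]$ as a principal ideal domain together with Proposition~\ref{prop:simpleB=B/Ba}. By that proposition, any simple $\mathbb B$-module $M$ is isomorphic to $\mathbb B/(\mathbb B\alpha)$ for some irreducible $\alpha\in\mathbb B$. Writing $\alpha=\sum_{i=m}^{n}a_i(z)X^i$ with $a_m(z),a_n(z)\neq 0$, I would first reduce to the case $m=0$: multiplying $\alpha$ on the left by $X^{-m}$ gives an associated generator (since $X^{-m}$ is a unit in $\mathbb B$), so without loss of generality $\alpha=\sum_{i=0}^{n}a_i(z)X^i$ with $a_0(z)\neq 0$ and $a_n(z)\neq 0$, where $n=\operatorname{length}_X(\alpha)$. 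Note that $n\geq 1$, since if $n=0$ then $\alpha=a_0(z)\in\C(z)^\times$ is a unit and $\mathbb B/(\mathbb B\alpha)=0$, which is not simple.

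The key step is then to show that $M=\mathbb B/(\mathbb B\alpha)$ has $\C(z)$-dimension exactly $n$, with basis the classes of $1,X,\dots,X^{n-1}$. For spanning: given any element of $\mathbb B$, I would run the right-division algorithm by $\alpha$ in the Euclidean algebra $\mathbb B$ (using the length function $\operatorname{length}_X$), writing it as $q\alpha+r$ with $r$ of $X$-degree $<n$; here one must be slightly careful that the natural division reducing high powers of $X$ works because the leading coefficient $a_n(z)$ is invertible in the field $\C(z)$, and similarly one clears negative powers of $X$ using that $a_0(z)$ is invertible — so every class in $M$ is represented by some $\sum_{i=0}^{n-1}c_i(z)X^i$. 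For linear independence: if $\sum_{i=0}^{n-1}c_i(z)X^i=\beta\alpha$ for some $\beta\in\mathbb B$, then comparing $X$-lengths (or the span of $X$-degrees) forces $\beta=0$ unless all $c_i$ vanish, because $\operatorname{length}_X(\beta\alpha)=\operatorname{length}_X(\beta)+n\geq n$ whenever $\beta\neq 0$, while the left side has $X$-degrees confined to $\{0,\dots,n-1\}$; a short argument on the lowest and highest occurring powers of $X$ closes this. Hence $\dim_{\C(z)}M=n<\infty$.

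Finally I would transport this back to an arbitrary simple $\mathbb B$-module via the isomorphism furnished by Proposition~\ref{prop:simpleB=B/Ba}, concluding that every simple $\mathbb B$-module is a finite dimensional $\C(z)$-vector space. The main obstacle I anticipate is purely bookkeeping: making the Euclidean division in the \emph{skew} Laurent ring precise in both the positive and negative $X$-directions (the twist $\nabla$ means coefficients get shifted as $z\mapsto z+i$, so one has to track $\nabla^i(a_j(z))$ carefully), and confirming that the relevant leading/trailing coefficients are units so that the reduction terminates — but none of this is deep, it is just the standard Euclidean-algorithm argument adapted to $\mathbb B=\C(z)[X,X^{-1};\nabla]$.
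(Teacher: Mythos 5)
Your proposal is correct and follows essentially the same route as the paper: reduce to $\alpha\in\C(z)[X]$ with nonzero constant and leading terms via multiplication by a unit $X^{-m}$, and then identify $\dim_{\C(z)}(\mathbb B/(\mathbb B\alpha))$ with $\operatorname{length}_X(\alpha)$. The only difference is that the paper declares the dimension count ``obvious,'' whereas you spell out the Euclidean division and the length-additivity argument for linear independence — both of which are accurate.
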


\begin{proof} Any simple $\mathbb B$-module is isomorphic to one of the form $\mathbb B/(\mathbb B\alpha),$ for some irreducible element $\alpha$. If necessary, after multiplying $\alpha$ by an invertible element we may assume that $\alpha\in \C[z][X]$. The claim is now obvious since $\dim_{\C(z)}(\mathbb B/(\mathbb B\alpha))=\operatorname{length}_X(\alpha)$.
\end{proof}


\begin{lem}\label{lem:simpletorsionfree-finiterank} Any simple torsion free $\sl(2)$-module is a finite rank torsion free $\C[z]$-module.
\end{lem}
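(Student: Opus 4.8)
The plan is to combine the dichotomy (Theorem~\ref{thm:dichotomy}) with Bavula's description (Proposition~\ref{prop:Bavula1}) and the finite-dimensionality of simple $\mathbb B$-modules (Lemma~\ref{lem:findim}). Let $V$ be a simple torsion free $\sl(2)$-module. Being simple, it is a Casimir module, so it has some semi-level $\mu$ with $\frak{Re}(\mu)\geq -\frac12$. By Proposition~\ref{prop:Bavula1}(1), $S^{-1}V=\C(z)\otimes_{\C[z]}V$ is a simple $\mathbb B$-module, and by Lemma~\ref{lem:findim} it is a finite dimensional $\C(z)$-vector space, say of dimension $n$. Since $V$ is torsion free as a $\C[z]$-module, the natural map $V\to S^{-1}V$ is injective, so $V$ is (isomorphic to) a $\C[z]$-submodule of $\C(z)^n$.

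The remaining point is to pass from ``submodule of $\C(z)^n$'' to ``finite rank torsion free $\C[z]$-module''. Here I would invoke the standard structure theory over the PID $\C[z]$: a torsion free module of finite rank need not be finitely generated in general, but what we actually need is only that its rank is finite, and the rank of a torsion free module embedded in $\C(z)^n$ is at most $n$. Concretely, pick any elements $v_1,\dots,v_m\in V$ that are $\C(z)$-linearly independent in $S^{-1}V$; then $m\leq n$, so the supremum of the sizes of $\C[z]$-linearly independent (equivalently $\C(z)$-linearly independent) subsets of $V$ is finite and bounded by $n$. That supremum is precisely the rank, so $\rk_{\C[z]}(V)\leq n<\infty$, which is the assertion.

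The main obstacle—really the only subtlety—is making sure the notion of ``rank'' being used is the one that behaves well here: the rank of a $\C[z]$-module $V$ should be defined as $\dim_{\C(z)}(\C(z)\otimes_{\C[z]}V)$, and one must check this agrees with the ``maximal number of independent elements'' description and that tensoring with $\C(z)$ over the submodule inclusion $V\hookrightarrow S^{-1}V$ recovers all of $S^{-1}V$ (it does, since $S^{-1}V$ is already a $\C(z)$-module and $V$ generates it after localization — indeed $S^{-1}V=S^{-1}(S^{-1}V)$). Once that bookkeeping is in place, the finiteness is immediate from Lemma~\ref{lem:findim}. I would phrase the proof in two lines: $V$ torsion free $\Rightarrow$ $V\hookrightarrow S^{-1}V$; $V$ simple torsion free $\Rightarrow$ $S^{-1}V$ simple over $\mathbb B$ $\Rightarrow$ (Lemma~\ref{lem:findim}) $\dim_{\C(z)}S^{-1}V<\infty$; hence $\rk_{\C[z]}V=\dim_{\C(z)}S^{-1}V<\infty$.
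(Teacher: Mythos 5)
Your proof is correct and follows essentially the same route as the paper: reduce to the statement that the localization $\C(z)\otimes_{\C[z]}V\simeq S^{-1}V$ is a simple $\mathbb B$-module (you cite Proposition~\ref{prop:Bavula1}(1), the paper cites Theorem~\ref{thm:isom-classes} together with Proposition~\ref{prop:simpleB=B/Ba}), then apply Lemma~\ref{lem:findim} to get finite $\C(z)$-dimension and conclude the rank of $V$ is finite. The extra bookkeeping you supply about the rank of a torsion free module embedded in $\C(z)^n$ is sound and merely makes explicit what the paper leaves implicit.
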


\begin{proof} Given a simple torsion free $\sl(2)$-module $V$, the Schur lemma implies that it is a $\mu$-semi-level Casimir module for some $\mu\in \C$. By Theorem \ref{thm:isom-classes}  and Proposition~\ref{prop:simpleB=B/Ba}, there exists an irreducible element $\alpha\in\mathbb B$ such that 
$$\C(z)\otimes_{\C[z]}V\simeq \mathbb B/(\mathbb B\alpha),$$ as $\C(z)$-vector spaces. 
In particular, by Lemma \ref{lem:findim} one has that $\C(z)\otimes_{\C[z]}V$ is a finite dimensional $\C(z)$-vector space. Therefore, $V$ is a finite rank torsion free $\C[z]$-module.
\end{proof}

\begin{proof}[Proof of Thm. \ref{thm:simpletorsionfreefingen-finiterankfree}]
Let $V$ be a simple torsion free finitely generated $\sl(2)$-module. 
From Lemma \ref{lem:simpletorsionfree-finiterank} one knows that $V$ is a finite rank torsion free $\C[z]$-module. Since $\C[z]$ is a principal ideal domain, the conclusion follows. 
\end{proof}

\begin{rem}
Notice that every rational $\sl(2)$-module $V$ is necessarily torsion free. However, rational $\sl(2)$-modules are always not simple since they have uncountable dimension as $\C$-vector spaces, whereas simple  $\sl(2)$-modules, which can be realized as quotients of the universal enveloping algebra $U(\sl(2))$, have at most countable $\C$-dimension due to the Poincar\'e-Birkhoff-Witt theorem.
\end{rem}

\begin{rem} Given a polynomial $\sl(2)$-module $V$, by extension of scalars we get the rational $\sl(2)$-module $\C(z)\otimes_{\C[z]}V$. Moreover, there is a natural inclusion $V\hookrightarrow \C(z)\otimes_{\C[z]}V$ of $\sl(2)$-modules that shows explicitly that a rational module obtained by extension of scalars is not simple.  However in general it is not true that a rational $\sl(2)$-module $W$ can be obtained as the extension of scalars of a polynomial $\sl(2)$-module.
\end{rem}

We conclude with a criterion on polynomial representations.  For the proof, we need the following reformulation of \cite[Prop. 9]{Bavula2}: 

\begin{prop}\label{prop:Bavula2} 
Let $\alpha=a_0(z)+a_1(z) X+\cdots+a_n(z) X^n\in\mathbb A^+$, where $n\geq 0$, $a_i(z)\in\C[z]$, and let $F_\alpha$ be the $\C$-vector subspace of $\mathbb A$ consisting of the vectors of the form 	\[
	\sum_{i> 0}p_i(z) Y^{i}+
	\sum_{j\geq 0}r_j(z) X^{n+j},
	\]
where $Y=\pi_\mu(z)X^{-1}=L_{-1}$, and $\deg(p_i(z))<\deg(a_0(z))$, $\deg(r_j(z))<\deg(a_n(z))$. Then there is a direct sum decomposition as $\C$-vector spaces
	\begin{equation}\label{eq:directsumdecomp}
	\mathbb A=F_\alpha\oplus\mathbb A_{[n-1]}\oplus(\mathbb A\alpha),
	\end{equation}
where $\mathbb A_{[n-1]}=\mathbb A_0\oplus\cdots\oplus\mathbb A_{n-1}$.
\end{prop}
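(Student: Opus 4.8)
The plan is to prove the displayed identity by showing separately that $F_\alpha$, $\mathbb A_{[n-1]}$ and $\mathbb A\alpha$ span $\mathbb A$ and that their sum is direct, following essentially Bavula's bookkeeping. Throughout I work with the $\Z$-grading $\mathbb A=\bigoplus_{i\in\Z}\mathbb A_i$, writing $X:=L_1$ and $Y:=L_{-1}$, so that $\mathbb A_i=\C[z]X^i$ and $\mathbb A_{-i}=\C[z]Y^i$ for $i>0$; the identities I use are $X\xi(z)=\xi(z+1)X$, $Y\xi(z)=\xi(z-1)Y$, $YX=\pi_\mu(z)$, $XY=\pi_\mu(z+1)$, together with their consequences $Y^mX^m=P_m(z)$, $Y^kX^j=P_j(z-k+j)Y^{k-j}$ for $0\le j\le k$, and $Y^kX^j=P_k(z)X^{j-k}$ for $j\ge k$, where $P_m(z):=\prod_{l=0}^{m-1}\pi_\mu(z-l)$ is a nonzero polynomial (as $\pi_\mu\ne0$). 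As in Bavula's statement I take $a_0(z)\ne0\ne a_n(z)$ — some such hypothesis is needed, since already for $\alpha=X$ (where $a_0=0$) one has $Y\notin\mathbb A_{[0]}+\mathbb A X$ — and I set $d_0=\deg a_0$, $d_n=\deg a_n$. Note that $F_\alpha=F_\alpha^-\oplus F_\alpha^+$, where $F_\alpha^-$ (spanned by the $z^kY^i$ with $i>0$, $k<d_0$) is supported in degrees $<0$ and $F_\alpha^+$ (spanned by the $z^kX^{n+j}$ with $j\ge0$, $k<d_n$) is supported in degrees $\ge n$, while $\mathbb A_{[n-1]}$ is supported in degrees $0,\dots,n-1$.

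The computational input I would establish first concerns the extremal homogeneous components of a right multiple $\beta\alpha$. A direct check with the identities above shows that, for a nonzero homogeneous $\beta_i\in\mathbb A_i$, the element $\beta_i\alpha$ has nonzero homogeneous components only in degrees $i,i+1,\dots,i+n$; its degree-$i$ component is $\beta_i$ with its $\C[z]$-coefficient multiplied by $a_0(z+i)$, and its degree-$(i+n)$ component is the corresponding generator $X^{i+n}$ (or $Y^{-(i+n)}$) with coefficient equal to the coefficient of $\beta_i$ times $a_n(z+i)$ times a nonzero polynomial. It follows that if $\beta\in\mathbb A\setminus\{0\}$ has lowest (resp.\ highest) nonzero homogeneous degree $p$ (resp.\ $q$), then $\beta\alpha$ has lowest degree exactly $p$ and highest degree exactly $q+n$, and the $\C[z]$-coefficient of the degree-$p$ (resp.\ degree-$(q+n)$) component of $\beta\alpha$ is a nonzero multiple of $a_0(z+p)$ (resp.\ of $a_n(z+q)$), in particular of degree $\ge d_0$ (resp.\ $\ge d_n$).

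For spanning I would argue by a double induction. First, for $w=\sum_{i=n}^{N}v_i(z)X^i$ supported in degrees $\ge n$, induct on $N$: dividing $v_N(z)=q(z)a_n(z+N-n)+r(z)$ with $\deg r<d_n$, I subtract $q(z)X^{N-n}\alpha\in\mathbb A\alpha$, which turns $v_N(z)X^N$ into $r(z)X^N\in F_\alpha^+$ and creates new homogeneous terms only in degrees $\le N-1$ — those of degree $\ge n$ handled by the inductive hypothesis, those of degree $<n$ absorbed by $\mathbb A_{[n-1]}$. Then, for $w$ supported in an arbitrary band $[-M,N]$, induct on $M$, the case $M=0$ being the previous step together with $\mathbb A_{[n-1]}$; for $M\ge1$, writing $w=u_M(z)Y^M+w'$ with $w'$ supported in degrees $\ge-M+1$, I divide $u_M(z)=q(z)a_0(z-M)+r(z)$ with $\deg r<d_0$ and subtract $q(z)Y^M\alpha\in\mathbb A\alpha$, which turns $u_M(z)Y^M$ into $r(z)Y^M\in F_\alpha^-$ and, by the component computation above, perturbs only degrees $\ge-M+1$, so the remainder is again supported in degrees $\ge-M+1$ and is finished by the inductive hypothesis. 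Since every element of $\mathbb A$ lies in some such band, this gives $\mathbb A=F_\alpha+\mathbb A_{[n-1]}+\mathbb A\alpha$.

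For directness, suppose $x+y+\beta\alpha=0$ with $x\in F_\alpha$, $y\in\mathbb A_{[n-1]}$, $\beta\in\mathbb A$, and assume $\beta\ne0$. If $\beta$ has a nonzero homogeneous component in some degree $\ge0$, let $q\ge0$ be the highest such degree; then $\beta\alpha=-(x+y)$ has highest degree $q+n\ge n$, and since $y$ lives in degrees $\le n-1$ this component must come from $F_\alpha^+$, so it equals $-r_q(z)X^{q+n}$ with $\deg r_q<d_n$ — contradicting that, by the previous step, it is a nonzero multiple of $a_n(z+q)$ and hence of degree $\ge d_n$. If instead $\beta$ is supported purely in degrees $<0$, let $-m<0$ (with $m>0$) be its lowest degree; then $\beta\alpha$ has lowest degree $-m<0$, which can only be matched by the term $p_m(z)Y^m$ of $x$ with $\deg p_m<d_0$, again contradicting that it is a nonzero multiple of $a_0(z-m)$, of degree $\ge d_0$. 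Hence $\beta=0$, so $x=-y$; since $F_\alpha$ is supported in degrees $<0$ or $\ge n$ and $\mathbb A_{[n-1]}$ in degrees $0,\dots,n-1$, disjointness of supports forces $x=y=0$ and therefore $\beta\alpha=0$, which is exactly the uniqueness required for the direct sum. I expect the main obstacle to be the bookkeeping in the spanning step — arranging the double induction (outer on the width $M$ of the negative band, inner on the top degree $N$) so that each Euclidean-division reduction lands in a strictly smaller instance rather than cycling, and handling the degree convention carefully when $a_0$ or $a_n$ is a nonzero constant, in which case the corresponding remainder vanishes identically and an entire extremal band collapses into $\mathbb A_{[n-1]}+\mathbb A\alpha$; by comparison, once the extremal-component computation is available, the directness part is immediate.
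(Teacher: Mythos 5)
The paper offers no proof of this proposition: it is stated as a reformulation of Bavula's Proposition 9 and cited without argument. Your reconstruction --- graded Euclidean division against the extremal coefficients $a_0$ and $a_n$ to get spanning, plus the degree bound $\deg\ge\deg a_0$ (resp.\ $\deg a_n$) on the lowest (resp.\ highest) homogeneous component of any right multiple $\beta\alpha$ to get directness --- is correct and is exactly the standard argument behind the cited result; your remark that the statement implicitly requires $a_0(z)a_n(z)\neq 0$ (witnessed by $\alpha=X$) is also a legitimate and worthwhile observation.
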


\begin{prop}\label{prop:submodule} 
With the same notations as in Proposition~\ref{prop:Bavula2}. The following conditions are equivalent:
\begin{enumerate}
\item $\mathbb A/\mathbb A\alpha$ is finitely generated as a $\C[z]$-module;
\item $a_0(z), a_n(z)$ belong to $\C^*$;
\item $\mathbb A/\mathbb A\alpha$ is a polynomial $\sl(2)$-module.
\end{enumerate}
\end{prop}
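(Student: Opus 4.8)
The plan is to prove the cyclic chain of implications $(2)\Rightarrow(3)\Rightarrow(1)\Rightarrow(2)$, exploiting the decomposition in Proposition~\ref{prop:Bavula2}. The key structural input is \eqref{eq:directsumdecomp}: as a $\C$-vector space, $\mathbb A/\mathbb A\alpha\cong F_\alpha\oplus\mathbb A_{[n-1]}$, where $\mathbb A_{[n-1]}=\mathbb A_0\oplus\cdots\oplus\mathbb A_{n-1}$ is a free $\C[z]$-module of rank $n$, and $F_\alpha$ consists of the ``tails'' $\sum_{i>0}p_i(z)Y^i+\sum_{j\geq 0}r_j(z)X^{n+j}$ subject to the degree bounds $\deg p_i<\deg a_0$ and $\deg r_j<\deg a_n$. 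Note however that the isomorphism $\mathbb A/\mathbb A\alpha\cong F_\alpha\oplus\mathbb A_{[n-1]}$ is only one of $\C$-vector spaces; to control the $\C[z]$-module structure I will need to understand how multiplication by $z$ interacts with this decomposition, using that $z\cdot X^k = X^k(z-k)$ (equivalently $z\cdot L_1^k$, $z\cdot L_{-1}^k$ stay within the same graded piece) — so each graded summand $\mathbb A_i\cong\C[z]$ is $\C[z]$-stable.

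For $(2)\Rightarrow(3)$: if $a_0(z),a_n(z)\in\C^*$ then the degree conditions force $p_i(z)=r_j(z)=0$, so $F_\alpha=0$ and $\mathbb A/\mathbb A\alpha\cong\mathbb A_{[n-1]}$ as $\C$-vector spaces. I then need to upgrade this to a $\C[z]$-module isomorphism: since each $\mathbb A_i$ is $z$-stable and $\mathbb A\alpha$ is a left $\mathbb A$-module hence $\C[z]$-stable, the projection $\mathbb A\to\mathbb A_{[n-1]}$ along $\mathbb A\alpha$ (which is $\C$-linear by the decomposition) is in fact $\C[z]$-linear when $F_\alpha=0$, giving $\mathbb A/\mathbb A\alpha\cong\mathbb A_{[n-1]}\cong\C[z]^n$ as $\C[z]$-modules; combined with the fact (established earlier, Theorem~\ref{thm:isom-classes} and the surrounding discussion, via $\C(z)\otimes(\mathbb A/\mathbb A\alpha)\cong\mathbb B/\mathbb B\alpha$) that $\mathbb A/\mathbb A\alpha$ is torsion free, this says it is a polynomial $\sl(2)$-module. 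The implication $(3)\Rightarrow(1)$ is immediate: a polynomial $\sl(2)$-module is by definition isomorphic to $\C[z]^m$, which is finitely generated over $\C[z]$.

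The main work is $(1)\Rightarrow(2)$, or rather its contrapositive: if $a_0(z)\notin\C^*$ or $a_n(z)\notin\C^*$, then $F_\alpha\neq 0$ and I must show $\mathbb A/\mathbb A\alpha$ is not finitely generated over $\C[z]$. Suppose, say, $\deg a_n\geq 1$ (the case $\deg a_0\geq 1$ is symmetric, using the $Y^i$ tail instead). Then $F_\alpha$ contains nonzero elements $r_j(z)X^{n+j}$ for every $j\geq 0$, so the images of the monomials $X^{n+j}$, $j\geq 0$, span an infinite-dimensional subspace involving arbitrarily high powers of $X$. The point is that multiplication by $z$ preserves $X$-degree (since $z\cdot X^k=X^k(z-k)$), so no element of $\mathbb A/\mathbb A\alpha$ that is a $\C[z]$-combination of finitely many generators can have components in $X$-degree beyond some fixed bound $N$; but $F_\alpha$ has nonzero components in all $X$-degrees $\geq n$. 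Hence $\mathbb A/\mathbb A\alpha$ cannot be generated over $\C[z]$ by finitely many elements. The one technical point to pin down carefully is that the decomposition \eqref{eq:directsumdecomp} is compatible with the $\Z$-grading in the sense that $\mathbb A_{[n-1]}$ and the $X^{n+j}$-part of $F_\alpha$ sit in definite graded degrees while $\mathbb A\alpha$ is \emph{not} graded (since $\alpha$ is inhomogeneous) — so I should argue at the level of a ``leading $X$-degree'' filtration: the image of an element of $\mathbb A$ of $X$-degree $\leq N$ in $\mathbb A/\mathbb A\alpha$ lands in $(F_\alpha\oplus\mathbb A_{[n-1]})_{\leq \max(N,n-1)}$, because reducing mod $\mathbb A\alpha$ via the straightening from \eqref{eq:directsumdecomp} never raises the top $X$-degree (each reduction step replaces $a_n(z)X^{n+k}$-type leading terms by lower-degree terms). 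Establishing this ``no degree-raising'' property of the reduction is the crux; everything else is bookkeeping.
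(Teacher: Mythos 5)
Your proposal is correct and follows essentially the same route as the paper: all three implications rest on the decomposition \eqref{eq:directsumdecomp}, with (2)$\Rightarrow$(3) obtained from $F_\alpha=0$ and (1)$\Rightarrow$(2) by showing that a finite $\C[z]$-generating set only reaches bounded $X$- (resp.\ $Y$-) degree while $F_\alpha\neq 0$ supplies classes of unbounded degree. The only difference is cosmetic: the paper extracts the contradiction by testing the specific elements $Y^{k+1}$ and $X^{k+1}$ against the uniqueness of the decomposition with degree-normalized coefficients, whereas you package the same computation as a ``reduction never raises the top $X$-degree'' lemma.
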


\begin{proof}
(1)$\implies$(2). Suppose that $F_{\alpha}\neq 0$. If $\mathbb A/\mathbb A\alpha$ is finitely generated as a $\C[z]$-module, then Proposition~\ref{prop:Bavula2} allows us to obtain a system of generators consisting of $1,X\ldots, X^{n-1}$ of $\mathbb A_{[n-1]}$ together with finitely many elements of $F_{\alpha}$, say $\{f_1,\ldots, f_r\}$ (where $r\geq 1$ since $F_{\alpha}\neq 0$). Let  us fix $k$ bigger than $n$, $\deg_Y(f_i)$ and $\deg_X(f_i)$ for all $i$. It follows that the set $	\{ Y^k, Y^{k-1}, \ldots, Y, 1, X, \ldots, X^k\}$ generates $\mathbb A/\mathbb A\alpha$ as a $\C[z]$-module. This is equivalent to say that $\{ Y^k, Y^{k-1}, \ldots, Y, 1, X, \ldots, X^k\}$ together with $\{Y^i\alpha, X^i\alpha\vert i\in {\mathbb Z}\} $ generate $\mathbb A$ as a $\C[z]$-module. 

In particular, $Y^{k+1}$ would be a linear combination of these elements; that is, there exists a relation in ${\mathbb A}$ of the following type
	\[
	Y^{k+1}= \sum_{i>0} \lambda_i(z) Y^i\alpha 
	+ \sum_{i=1}^k \mu_i(z) Y^i 
	+ \sum_{i=0}^k \nu_i(z) X^i 
	+ \sum_{i>0} \xi_i(z) X^i\alpha.
	\]
where it can be assumed that $\deg(\mu_i(z))<\deg(a_0(z))$ for all $i$ and that $\deg(\nu_i(z))<\deg(a_n(z))$ for all $i\geq n$. Having in mind the decomposition~\eqref{eq:directsumdecomp} as $\C$-vector spaces and that $Y^{k+1}\in F_{\alpha}$, we obtain a contradiction unless $a_0(z)$ is constant. Arguing similarly with $X^{k+1}$, we obtain that $\alpha_n(z)\in \C^*$. Hence $F_{\alpha}=0$.
%
%
%
(2)$\implies$(3). By Proposition~\ref{prop:Bavula2},  the free $\C[z]$-module of finite rank $\mathbb A_{[n-1]}$ is naturally a $\C[z]$-submodule of the quotient $\mathbb A/(\mathbb A\alpha)$. Define the $\C[z]$-module $Q$ by the exact sequence 
	\[ 
	0\to \mathbb A_{[n-1]}\to \mathbb A/\mathbb A\alpha \to Q \to 0 
	\]
and observe that there is a surjection $F_{\alpha}\to Q$. Since, by the hypothesis, $\deg(a_0(z))=\deg(a_n(z))=0$, it follows that $F_{\alpha}=0$ and, consequently, $Q=0$. 


(3)$\implies$(1). Obvious. 
\end{proof}


\section{Polynomial Casimir representations}\label{sec:polynomial}

Let $V$ be a free $\C[z]$-module of rank $n$. After choosing a base of $V$ we can identify it with $\C[z]^n$ and, if no confusion arises, we use $V$ and $\C[z]^n$ interchangeably. The automorphism $\nabla\colon \C[z]\to\C[z]$ extends in a natural way to a $\C[z]$-semilinear automorphism of  $V\simeq\C[z]^n$ and we continue to denote its extension by the same letter.

Following the Definition~\ref{defn:semilinear} of the previous section, we denote by $\End_k(V)$ the $\C[z]$-module of $\nabla^k$-semilinear endomorphisms of $V$. In the same way
	\begin{equation}\label{eq:gradedalgebrasemilinear}
	\End_\bullet(V)\,:=\, \bigoplus_{k\in\Z}\End_k(V) \,\subseteq\, \End_{\C}(V),
	\end{equation}
is a $\Z$-graded $\C$-algebra which will be called the $\Z$-graded algebra of semilinear endomorphisms of the $\C[z]$-module $V$ with respect to the automorphism $\nabla$.

\begin{prop}\label{prop:endk=nablak}
It holds that
$$\End_k(V)\,=\, \End_{\C[z]}(V) \cdot \nabla^k= \, \nabla^k \cdot \End_{\C[z]}(V). $$
\end{prop}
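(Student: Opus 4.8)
The plan is to establish the two claimed equalities
$\End_k(V)=\End_{\C[z]}(V)\cdot\nabla^k=\nabla^k\cdot\End_{\C[z]}(V)$
by exhibiting explicit mutually inverse bijections between $\End_k(V)$ and $\End_{\C[z]}(V)$. The key observation is that the distinguished semilinear automorphism $\nabla$ of $V\simeq\C[z]^n$ is invertible in $\End_{-k}(V)$ with inverse $\nabla^{-k}$, and composing with it converts the twisting exponent additively: if $\psi\in\End_j(V)$ and $\chi\in\End_\ell(V)$, then $\psi\circ\chi\in\End_{j+\ell}(V)$, which is immediate from the chain $\psi(\chi(z\cdot v))=\psi((z+\ell)\chi(v))=(z+\ell+j)\psi(\chi(v))$. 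In particular $\End_{\C[z]}(V)=\End_0(V)$, and both stated right-hand sides are visibly contained in $\End_k(V)$.

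First I would prove $\End_k(V)\subseteq\End_{\C[z]}(V)\cdot\nabla^k$: given $\varphi\in\End_k(V)$, set $g:=\varphi\circ\nabla^{-k}$. Then $g\in\End_k(V)\circ\End_{-k}(V)\subseteq\End_0(V)=\End_{\C[z]}(V)$, and $g\cdot\nabla^k=\varphi\circ\nabla^{-k}\circ\nabla^k=\varphi$, giving the factorization. Symmetrically, setting $g':=\nabla^{-k}\circ\varphi$ yields $g'\in\End_{\C[z]}(V)$ with $\nabla^k\cdot g'=\varphi$, which proves $\End_k(V)\subseteq\nabla^k\cdot\End_{\C[z]}(V)$. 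Combined with the trivial reverse inclusions from the previous paragraph, all three sets coincide.

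The only point requiring genuine verification — and the place where one must be slightly careful — is that $\nabla$ really is a semilinear automorphism of $V$, i.e.\ that $\nabla^{-k}$ makes sense as an element of $\End_{-k}(V)$. After fixing the basis identifying $V$ with $\C[z]^n$, the extension of $\nabla$ to $V$ acts coordinatewise by the $\C$-algebra automorphism $\nabla\colon\C[z]\to\C[z]$, $\nabla(z)=z+1$; this automorphism has inverse $z\mapsto z-1$, hence its coordinatewise extension is a bijective $\C$-linear endomorphism of $V$ satisfying $\nabla(z\cdot v)=(z+1)\cdot\nabla(v)$, so $\nabla\in\End_1(V)$ and $\nabla^{-1}\in\End_{-1}(V)$, whence $\nabla^k\in\End_k(V)$ for all $k\in\Z$ with the evident inverse. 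I do not expect any serious obstacle: the statement is essentially the assertion that $\End_\bullet(V)$ is a crossed-product-type graded algebra in which the degree-$k$ piece is a free rank-one module over the degree-$0$ piece (on either side) generated by $\nabla^k$, and the argument above is just the standard unwinding of that fact.
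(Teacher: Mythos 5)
Your argument is correct and is exactly the one the paper intends: the paper's proof is a one-line remark that $\End_0(V)=\End_{\C[z]}(V)$, $\nabla^k\in\End_k(V)$, and $\nabla$ is an automorphism, leaving to the reader precisely the unwinding (additivity of the grading under composition and the factorizations $\varphi=(\varphi\circ\nabla^{-k})\circ\nabla^k=\nabla^k\circ(\nabla^{-k}\circ\varphi)$) that you spell out. No discrepancy in approach; yours is just the fully detailed version.
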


\begin{proof}
Having in mind that $\End_0(V)=\End_{\C[z]}(V)$, $\nabla^k\in\End_k(V)$ for all $k\in\Z$ and that $\nabla$ is an automorphism, the claim follows. 
\end{proof}

We denote by $\sl(2)_\mu\mathrm{-Mod}(V)$ the space of polynomial Casimir representations of semi-level $\mu\in\C$ defined on a $\C[z]$-module $V$.

In order to define on $V$ a Casimir representation of semi-level $\mu\in\C$,  we must give two semilinear endomorphisms $\rho(L_{-1})\in\End_{-1}(V)$, $\rho(L_1)\in\End_{1}(V)$ that must satisfy equation (\ref{eq:casimir-module}).

Taking into account Proposition \ref{prop:endk=nablak}, equation \eqref{eq:casimir-module}  and the fact that $\rho(L_k)\in \End_{k}(V)$ for $k=-1,0,1$, we immediately have the following result.
 
\begin{prop} 
Let $V$ be a free $\C[z]$-module of finite rank. The group $\operatorname{GL}_{\C[z]}(V)\times \operatorname{GL}_{\C[z]}(V) $ acts on  $\sl(2)_\mu\mathrm{-Mod}(V)$ by $((g,h),\rho)\mapsto \rho^{(g,h)}$ where
	$$
	\rho^{(g,h)}(L_{-1}) \,:=\, h\circ \rho(L_{-1}) \circ g^{-1}\quad ,  \quad
	\rho^{(g,h)}(L_{1}) \,:=\,  g\circ \rho(L_{1}) \circ h^{-1}.
	$$

\end{prop}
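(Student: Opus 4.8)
The plan is to check three things, in this order: (i) that for every $(g,h)\in\GL_{\C[z]}(V)\times\GL_{\C[z]}(V)$ and every $\rho\in\sl(2)_\mu\mathrm{-Mod}(V)$ the pair $\rho^{(g,h)}$ again defines a polynomial Casimir representation of semi-level $\mu$ on $V$ (so that the formulas land inside $\sl(2)_\mu\mathrm{-Mod}(V)$); (ii) that $\rho^{(\Id_V,\Id_V)}=\rho$; and (iii) that the assignment $(g,h)\mapsto\rho^{(g,h)}$ is compatible with the componentwise product on $\GL_{\C[z]}(V)\times\GL_{\C[z]}(V)$, i.e. $\bigl(\rho^{(g,h)}\bigr)^{(g',h')}=\rho^{(g'g,\,h'h)}$.

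For (i), I would argue as follows. Since $g,h\in\GL_{\C[z]}(V)\subseteq\End_0(V)$ while $\rho(L_{-1})\in\End_{-1}(V)$ and $\rho(L_1)\in\End_1(V)$, the fact that $\End_\bullet(V)$ is $\Z$-graded under composition (see \eqref{eq:gradedalgebrasemilinear}) gives at once $\rho^{(g,h)}(L_{-1})=h\circ\rho(L_{-1})\circ g^{-1}\in\End_{-1}(V)$ and $\rho^{(g,h)}(L_1)=g\circ\rho(L_1)\circ h^{-1}\in\End_1(V)$, so the semilinearity requirements of Definition~\ref{defn:semilinear} are met. It then remains to verify the identities \eqref{eq:casimir-module}. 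Here the point is that the middle factors $g^{-1}\circ g$ and $h^{-1}\circ h$ cancel:
$$
\rho^{(g,h)}(L_{-1})\circ\rho^{(g,h)}(L_1)=h\circ\rho(L_{-1})\circ\rho(L_1)\circ h^{-1}=h\circ\bigl(\pi_\mu(z)\Id_V\bigr)\circ h^{-1},
$$
and since $h$ is $\C[z]$-linear it commutes with the homothety $\pi_\mu(z)\Id_V$, so the last term equals $\pi_\mu(z)\Id_V$; symmetrically $\rho^{(g,h)}(L_1)\circ\rho^{(g,h)}(L_{-1})=g\circ\bigl(\pi_\mu(z+1)\Id_V\bigr)\circ g^{-1}=\pi_\mu(z+1)\Id_V$. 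As noted right after \eqref{eq:casimir-module}, the commutator relation $[\rho^{(g,h)}(L_{-1}),\rho^{(g,h)}(L_1)]=-2z$ is then automatic, so $\rho^{(g,h)}\in\sl(2)_\mu\mathrm{-Mod}(V)$.

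The remaining points (ii) and (iii) are pure bookkeeping. The identity $\rho^{(\Id_V,\Id_V)}=\rho$ is immediate from the defining formulas. For (iii), one computes directly, using associativity of composition and $(g'g)^{-1}=g^{-1}\circ g'^{-1}$,
$$
\bigl(\rho^{(g,h)}\bigr)^{(g',h')}(L_{-1})=h'\circ h\circ\rho(L_{-1})\circ g^{-1}\circ g'^{-1}=\rho^{(g'g,\,h'h)}(L_{-1}),
$$
and likewise $\bigl(\rho^{(g,h)}\bigr)^{(g',h')}(L_1)=g'\circ g\circ\rho(L_1)\circ h^{-1}\circ h'^{-1}=\rho^{(g'g,\,h'h)}(L_1)$, which is exactly the action axiom. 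I do not expect any genuine obstacle: the only step that is not a formal consequence of the $\Z$-grading is the cancellation-and-commutation argument for \eqref{eq:casimir-module}, and even that reduces instantly to the $\C[z]$-linearity of $g$ and $h$. The substance of the statement is simply that the asymmetric placement of $g$ and $h$ in the two semilinear components is the one that keeps the Casimir relations invariant, which the computation above confirms.
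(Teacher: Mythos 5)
Your proof is correct and follows the same route the paper intends: the paper states this proposition as an immediate consequence of Proposition~\ref{prop:endk=nablak}, the relations \eqref{eq:casimir-module}, and the grading $\rho(L_k)\in\End_k(V)$, leaving the verification to the reader. You have simply written out those checks (grading, cancellation of the middle factors, commutation of $g,h$ with the homotheties $\pi_\mu(z)\Id_V$ and $\pi_\mu(z+1)\Id_V$, and the action axioms) in full, and they are all accurate.
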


In particular, $\rho,\rho'\in\sl(2)_\mu\mathrm{-Mod}(V)$ are equivalent $\sl(2)$-re\-pre\-sen\-ta\-tions if and only if they are related by the diagonal action, that is  $\rho'=\rho^{(g,g)}$ for some $g\in\operatorname{GL}_{\C[z]}(V)$. This motivates the following 

\begin{defn} We say that two representations $\rho,\rho'\in\sl(2)_\mu\mathrm{-Mod}(V)$ are weakly equivalent  if there exists $(g,h)\in \GL_{\C[z]}(V)\times \GL_{\C[z]}(V)$ such that $\rho_2=\rho_1^{(g,h)}$.  
\end{defn}

\begin{prop}\label{prop:endomorphisms-polynomial-casimir-rep} Let $\rho\colon\sl(2)\to\End(V)$ be a polynomial Casimir representation of semi-level $\mu$. Then one has 
$$\End_{\sl(2)}(V_\rho)=\{\phi\in\End_{\C[z]}(W)\colon [\phi,\rho(L_1)]=0\}.$$
\end{prop}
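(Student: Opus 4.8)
The plan is to show that an endomorphism $\phi$ of $V$ commutes with the full $\sl(2)$-action if and only if it is $\C[z]$-linear and commutes with $\rho(L_1)$. First I would note that $\phi\in\End_{\sl(2)}(V_\rho)$ means by definition that $\phi\in\End_{\C}(V)$ and $\phi$ commutes with $\rho(L_{-1})$, $\rho(L_0)$ and $\rho(L_1)$. Since $z$ acts by $\rho(L_0)$, commuting with $\rho(L_0)$ is exactly the condition $\phi(z\cdot v)=z\cdot\phi(v)$, i.e.\ $\phi\in\End_{\C[z]}(V)=\End_0(V)$. This already gives the inclusion ``$\subseteq$'' once we also remember that commuting with $\rho(L_1)$ is one of the three required conditions; the content of the proposition is therefore the reverse inclusion: a $\C[z]$-linear $\phi$ commuting with $\rho(L_1)$ automatically commutes with $\rho(L_{-1})$.

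The key step is to recover $\rho(L_{-1})$ from $\rho(L_1)$. Here I would use the Casimir relations~\eqref{eq:casimir-module}: $\rho(L_{-1})\circ\rho(L_1)=\pi_\mu(z)\Id_V$ and $\rho(L_1)\circ\rho(L_{-1})=\pi_\mu(z+1)\Id_V$. Working inside the localized algebra $\mathbb B$ (equivalently, after tensoring with $\C(z)$, which is harmless since $V$ is torsion free as it is free), $\rho(L_1)$ becomes invertible with $\rho(L_1)^{-1}=\pi_\mu(z+1)^{-1}\rho(L_{-1})=\rho(L_{-1})\pi_\mu(z)^{-1}$, so that $\rho(L_{-1})=\pi_\mu(z)\,\rho(L_1)^{-1}$ acting on $\C(z)\otimes_{\C[z]}V$. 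Now if $\phi$ is $\C[z]$-linear it extends $\C(z)$-linearly to $\C(z)\otimes_{\C[z]}V$, it commutes with multiplication by $\pi_\mu(z)$ and by $\pi_\mu(z+1)^{-1}$, and if it commutes with $\rho(L_1)$ it also commutes with $\rho(L_1)^{-1}$; hence it commutes with $\rho(L_{-1})=\pi_\mu(z)\rho(L_1)^{-1}$ on the localization, and therefore on the submodule $V$ itself. Finally $\phi$ commutes with $\rho(L_0)=z\cdot(-)$ because it is $\C[z]$-linear, so $\phi\in\End_{\sl(2)}(V_\rho)$.

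I should be slightly careful about one degenerate case: the argument above divides by $\rho(L_1)$, which is only legitimate when $\rho(L_1)$ is injective on $V$, equivalently when $\pi_\mu(z)$ is a nonzero polynomial and $\rho(L_1)$ is not identically zero. Since $\pi_\mu(z)=(z+\mu)(z-\mu-1)$ is never the zero polynomial, $\rho(L_{-1})\circ\rho(L_1)=\pi_\mu(z)\Id_V$ forces $\rho(L_1)$ to be injective on the torsion-free module $V$, so this is not actually an obstruction — but it is the point that needs to be checked, and it is the only place where one uses that $V$ is torsion free and that the Casimir relations hold. An alternative, more elementary route avoiding localization is: from $\rho(L_1)\circ\rho(L_{-1})=\pi_\mu(z+1)\Id$ and $[\phi,\rho(L_1)]=0$, $\C[z]$-linearity gives $\rho(L_1)\circ(\phi\circ\rho(L_{-1})-\rho(L_{-1})\circ\phi)=0$, and then injectivity of $\rho(L_1)$ yields $\phi\circ\rho(L_{-1})=\rho(L_{-1})\circ\phi$ directly. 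This is the version I would actually write, since it keeps everything inside $V$. The main (minor) obstacle is thus simply justifying the injectivity of $\rho(L_1)$ and organizing the two inclusions cleanly; there is no real difficulty beyond that.
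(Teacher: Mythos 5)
Your proposal is correct and its ``more elementary route'' in the last paragraph is exactly the paper's argument: expand the commutator of $\phi$ with $\rho(L_1)\circ\rho(L_{-1})=\pi_\mu(z+1)\Id_V$ and cancel the injective map $\rho(L_1)$ using that $V$ is torsion free. Your explicit justification of the injectivity of $\rho(L_1)$ (from $\rho(L_{-1})\circ\rho(L_1)=\pi_\mu(z)\Id_V$ with $\pi_\mu\neq 0$) is a welcome detail the paper leaves implicit.
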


\begin{proof} It is obvious that elements of $ \End_{\sl(2)}(V_\rho)$ satisfy the condition of the statement. On the other hand, if $\phi\in\End_{\C[z]}(V)$ verifies $[\phi,\rho(L_1)]=0$, then expanding the commutator of $\phi$ with the second equation in \eqref{eq:casimir-module} and taking into account that $V$ is torsion free, we also get $[\phi,\rho(L_{-1})]=0$, proving the claim.
\end{proof}


\begin{thm}\label{thm:polynomialmuCasimir}
Let $V$ be a rank $n$ free $\C[z]$-module. There is an identification 
	$$
	\left\{
	\begin{gathered}
	\varphi\in \End_1(V) \text{ s.t. the $n$-th}
	\\
	\text{invariant  factor of $\varphi $ divides $\pi_{\mu}(z+1)$}
	\end{gathered}\right\}	
	\longleftrightarrow \sl(2)_\mu\mathrm{-Mod}(V),$$ 
that sends $\varphi\in \End_{1}(V)$ to the $\sl(2)$-representation 
	$$
	\rho(L_{-1})\, :=\, \pi_\mu(z) \varphi^{-1}  
		\quad , \quad 
		\quad 	\rho(L_1)\, :=\, \varphi\,.  	$$
\end{thm}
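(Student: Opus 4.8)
The plan is to establish the claimed correspondence by showing that the data of a polynomial Casimir representation of semi-level $\mu$ on $V$ is equivalent to the data of a single semilinear endomorphism $\varphi\in\End_1(V)$ subject to a divisibility constraint on its invariant factors. First I would check that the assignment is well-defined in the forward direction: given $\varphi\in\End_1(V)$ whose $n$-th invariant factor divides $\pi_\mu(z+1)$, I claim $\varphi$ is injective and its image contains $\pi_\mu(z+1)V$. Indeed, by Proposition~\ref{prop:endk=nablak} we may write $\varphi=\nabla\circ A$ with $A\in\End_{\C[z]}(V)$, and the invariant factors of $\varphi$ are by definition those of $A$; the Smith normal form of $A$ then shows that $\det(A)$ divides a power of $\pi_\mu(z+1)$ and, more precisely, that $A$ (hence $\varphi$) admits a two-sided "adjugate-type" factorization so that $\varphi^{-1}$, a priori only defined over $\C(z)$, satisfies $\pi_\mu(z)\varphi^{-1}\in\End_{-1}(V)$. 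This last point is where the hypothesis on the $n$-th (i.e.\ largest) invariant factor is used: it guarantees that after composing with multiplication by $\pi_\mu(z)$ one lands back inside $V$ and not merely in $S^{-1}V$. One must be slightly careful with the shift, checking that $\pi_\mu(z)\varphi^{-1}$ is $\nabla^{-1}$-semilinear and that $\pi_\mu(z+1)\varphi^{-1}\varphi=\pi_\mu(z+1)\Id$, $\varphi\bigl(\pi_\mu(z)\varphi^{-1}\bigr)=\pi_\mu(z+1)\Id$, recovering exactly the two relations in~\eqref{eq:casimir-module} once we set $\rho(L_1)=\varphi$ and $\rho(L_{-1})=\pi_\mu(z)\varphi^{-1}$.

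Next I would verify that these $\rho(L_{\pm1})$ genuinely define an $\sl(2)$-module structure, i.e.\ that the commutation relation $[\rho(L_{-1}),\rho(L_1)]=-2z$ holds. As noted in the paragraph following Definition~\ref{defn:semilinear} and again after~\eqref{eq:casimir-module}, this relation is an automatic consequence of the two identities in~\eqref{eq:casimir-module}: subtracting $\rho(L_1)\rho(L_{-1})=\pi_\mu(z+1)\Id$ from $\rho(L_{-1})\rho(L_1)=\pi_\mu(z)\Id$ gives $[\rho(L_{-1}),\rho(L_1)]=\bigl(\pi_\mu(z)-\pi_\mu(z+1)\bigr)\Id=-2z\,\Id$ after expanding the definition of $\pi_\mu$. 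Thus the forward map $\varphi\mapsto\rho$ indeed lands in $\sl(2)_\mu\mathrm{-Mod}(V)$.

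For the inverse direction, starting from $\rho\in\sl(2)_\mu\mathrm{-Mod}(V)$ I would simply set $\varphi:=\rho(L_1)\in\End_1(V)$ and recover $\rho(L_{-1})$ from~\eqref{eq:casimir-module}. The content here is to show that $\varphi$ meets the invariant-factor condition. Since $\rho(L_{-1})\circ\varphi=\pi_\mu(z)\Id_V$ and $\rho(L_{-1})$ is an honest $\C[z]$-semilinear (hence, after untwisting by $\nabla^{-1}$, $\C[z]$-linear) endomorphism of $V$, the cokernel of $\varphi$ is a quotient of $V/\pi_\mu(z)V$, a torsion $\C[z]$-module annihilated by $\pi_\mu(z)$; equivalently, writing $\varphi=\nabla\circ A$, every elementary divisor of $A$ divides $\pi_\mu(z)$, and composing with $\nabla$ shifts this to: the $n$-th invariant factor of $\varphi$ divides $\pi_\mu(z+1)$. (One should double-check the bookkeeping of the shift, since invariant factors of a semilinear map are defined after the untwist, and the polynomial appearing, $\pi_\mu(z)$ versus $\pi_\mu(z+1)$, depends on which side one untwists.) Finally I would confirm the two assignments are mutually inverse — one composite is the identity by construction, and the other is the identity because $\rho(L_{-1})$ is uniquely determined as $\pi_\mu(z)\varphi^{-1}$ in $\End_{\C}(V)$ once $\varphi=\rho(L_1)$ is fixed, using torsion-freeness of $V$ as in the proof of Proposition~\ref{prop:endomorphisms-polynomial-casimir-rep}.

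The main obstacle I anticipate is the invariant-factor bookkeeping together with the claim that $\pi_\mu(z)\varphi^{-1}$ lands inside $V$: one needs the precise fact from the theory of the Smith normal form over the PID $\C[z]$ that if $A\in M_n(\C[z])$ has $n$-th invariant factor $d_n$ with $d_n\mid q$ for some $q\in\C[z]$, then $q\,A^{-1}\in M_n(\C[z])$. This is standard (it follows by diagonalizing $A=U D W$ with $D=\diag(d_1,\dots,d_n)$, $d_i\mid d_{i+1}$, so $qA^{-1}=W^{-1}\diag(q/d_1,\dots,q/d_n)U^{-1}$ has polynomial entries), but it must be invoked cleanly and combined correctly with the $\nabla$-twist. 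Everything else — the equivalences in~\eqref{eq:casimir-module}, the automatic commutation relation, and the mutual inverseness — is a routine verification.
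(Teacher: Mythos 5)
Your proposal is correct and follows essentially the same route as the paper: both directions reduce to the Smith normal form of the $\C[z]$-linear part of $\varphi$, with the Casimir relation forcing the largest invariant factor to divide $\pi_\mu(z+1)$ (the paper packages this as Lemma~\ref{lem:smith-determination}, you as the statement that the cokernel of $\varphi$ is killed by $\pi_\mu$), and conversely that divisibility making $\pi_\mu(z)\varphi^{-1}$ polynomial. The only point to tidy is the one you flag yourself: for the cokernel bound it is cleaner to use $\varphi\circ\rho(L_{-1})=\pi_\mu(z+1)\Id$, which exhibits $\Coker\varphi$ as a quotient of $V/\pi_\mu(z+1)V$ with no shift to track, rather than $\rho(L_{-1})\circ\varphi=\pi_\mu(z)\Id$, which only exhibits it as a ($\nabla$-twisted) submodule of $V/\pi_\mu(z)V$.
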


The proof requires some Lemmas. 

\begin{lem}\label{lem:A1}
Let $\rho\in \sl(2)_\mu\mathrm{-Mod}(V)$. There exist $A_{-1},A_1\in\End_{\C[z]}(V)$ such that 
	\begin{equation}\label{eq:rho=Anabla}
	\rho(L_{-1})=A_{-1}\circ \nabla^{-1},\quad \rho(L_{1})=A_{1}\circ \nabla.
	\end{equation}
\end{lem}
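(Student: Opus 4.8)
The plan is to use Proposition~\ref{prop:endk=nablak}, which tells us that any $\nabla^k$-semilinear endomorphism factors uniquely as a $\C[z]$-linear endomorphism composed with $\nabla^k$. First I would apply this to $\rho(L_{-1})\in\End_{-1}(V)$ and to $\rho(L_1)\in\End_1(V)$: by the proposition there exist unique $A_{-1},A_1\in\End_{\C[z]}(V)$ with $\rho(L_{-1})=A_{-1}\circ\nabla^{-1}$ and $\rho(L_1)=A_1\circ\nabla$, which is exactly the assertion.

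Concretely, one notes that $\nabla^{-1}\in\End_{-1}(V)$ is invertible with inverse $\nabla\in\End_1(V)$, so the composite $\rho(L_{-1})\circ\nabla\colon V\to V$ lies in $\End_0(V)=\End_{\C[z]}(V)$ (the grading is additive on the $\Z$-graded algebra $\End_\bullet(V)$, and $-1+1=0$); call it $A_{-1}$. Then $A_{-1}\circ\nabla^{-1}=\rho(L_{-1})\circ\nabla\circ\nabla^{-1}=\rho(L_{-1})$. Symmetrically, $A_1:=\rho(L_1)\circ\nabla^{-1}\in\End_0(V)=\End_{\C[z]}(V)$ and $A_1\circ\nabla=\rho(L_1)$. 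This establishes \eqref{eq:rho=Anabla}.

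There is essentially no obstacle here: the lemma is a direct bookkeeping consequence of the structure of the graded algebra $\End_\bullet(V)$ together with the invertibility of $\nabla$ already recorded in Proposition~\ref{prop:endk=nablak}. The only point worth a sentence is to observe that the factorization is forced to be of the stated one-sided shape (linear part on the \emph{left}) because we have prescribed where to place $\nabla^{\pm1}$; had we wanted the linear part on the right we would instead write $\rho(L_{-1})=\nabla^{-1}\circ A_{-1}'$, and the two descriptions are interchangeable by conjugating $A_{-1}$ with $\nabla$, again using Proposition~\ref{prop:endk=nablak}. We will use the left-factored form throughout the subsequent lemmas, so we fix it here.
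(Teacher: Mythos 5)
Your proof is correct and follows exactly the paper's route: the paper also just notes that $\rho(L_{\pm 1})\in\End_{\pm 1}(V)$ and invokes Proposition~\ref{prop:endk=nablak} to obtain the factorization. Your extra unpacking (composing with $\nabla^{\mp 1}$ to land in $\End_0(V)=\End_{\C[z]}(V)$) is just the detail the paper leaves implicit.
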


\begin{proof}
Given $\rho$, a Casimir representation of semi-level $\mu\in\C$, one has that  $\rho(L_{-1})\in\End_{-1}(V)$ and $\rho(L_1)\in\End_{1}(V)$. Bearing in mind Proposition~\ref{prop:endk=nablak} 
the claim follows. 
\end{proof}

Since we are assuming an identification $V\simeq \C[z]^n$, every $A\in \End_{\C[z]}(V)$ has an associated matrix $A(z) \in M_{n\times n}(\C[z])$. 

\begin{lem}\label{lem:A1(z)}
Two endomorphisms $A_{-1},A_1\in\End_{\C[z]}(V)$ define a Ca\-si\-mir representation on $V$ of semi-level $\mu\in\C$ (via equation \eqref{eq:rho=Anabla}) if and only if their associated matrices $A_{-1}(z), A_1(z)$ w.r.t. some basis satisfy
\begin{equation}\label{eq:matrix-cond} A_{-1}(z)A_1(z-1)  =\pi_\mu(z)\Id, \quad A_1(z)A_{-1}(z+1) =\pi_\mu(z+1)\Id.
\end{equation}
\end{lem}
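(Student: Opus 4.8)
The plan is to unwind what it means for the pair $(A_{-1},A_1)$ to define a Casimir representation of semi-level $\mu$ via the formulas $\rho(L_{-1})=A_{-1}\circ\nabla^{-1}$ and $\rho(L_1)=A_1\circ\nabla$, and then translate the two composition identities in \eqref{eq:casimir-module} into matrix equations over $\C[z]$. First I would simply compute the two compositions. For the first, $\rho(L_{-1})\circ\rho(L_1)=A_{-1}\circ\nabla^{-1}\circ A_1\circ\nabla$, and since $\nabla^{-1}\circ A_1$ acts on a vector $v$ by applying $A_1$ and then substituting $z\mapsto z-1$ in the coefficients, the associated matrix of $\nabla^{-1}\circ A_1\circ\nabla$ is $A_1(z-1)$; hence the matrix of $\rho(L_{-1})\circ\rho(L_1)$ is $A_{-1}(z)\,A_1(z-1)$. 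The condition \eqref{eq:casimir-module} says this equals $\pi_\mu(z)\Id_V$, which is exactly the first equation in \eqref{eq:matrix-cond}. Symmetrically, $\rho(L_1)\circ\rho(L_{-1})=A_1\circ\nabla\circ A_{-1}\circ\nabla^{-1}$ has associated matrix $A_1(z)\,A_{-1}(z+1)$, and \eqref{eq:casimir-module} forces this to be $\pi_\mu(z+1)\Id_V$, giving the second equation in \eqref{eq:matrix-cond}.

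The only slightly delicate point — and the one I would state carefully — is the bookkeeping for how $\nabla$ conjugates a $\C[z]$-linear endomorphism: one must check that if $A\in\End_{\C[z]}(V)$ has matrix $A(z)$, then $\nabla^k\circ A\circ\nabla^{-k}$ is again $\C[z]$-linear with matrix $A(z+k)$ (equivalently $\nabla^{-k}\circ A\circ\nabla^k$ has matrix $A(z-k)$). This is immediate from the definition of $\nabla$ as the semilinear automorphism extending $z\mapsto z+1$ on the fixed basis: on basis vectors $\nabla$ is the identity, so conjugation by $\nabla^k$ leaves the basis fixed and acts on matrix entries by $\xi(z)\mapsto\xi(z+k)$. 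With this remark in hand, both directions of the ``if and only if'' are the same computation read forwards and backwards, because \eqref{eq:casimir-module} and the commutation relation are equivalent data: as already noted in the text, $[\rho(L_{-1}),\rho(L_1)]=-2z$ follows automatically from the two composition identities, so there is nothing further to verify.

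I do not expect a genuine obstacle here; the ``hard part'' is purely notational, namely being scrupulous about the order of composition versus multiplication of matrices and about which of $A_1(z-1)$ versus $A_1(z+1)$ appears where, since a sign slip in the shift would produce $\pi_\mu(z\mp1)$ on the wrong side. To guard against this I would fix once and for all the convention that the matrix of a composition $B\circ A$ is the product $B(z)A(z)$ of the respective matrices (consistent with $\rho(L_{-1})\circ\rho(L_1)$ in \eqref{eq:casimir-module} being evaluated on the right factor first), and then the shift in $A_1(z-1)$ comes from $\nabla^{-1}$ sitting to the left of $A_1$ in $\rho(L_{-1})\circ\rho(L_1)=A_{-1}\circ\nabla^{-1}\circ A_1\circ\nabla$. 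Once the conventions are pinned down the proof is three or four lines.
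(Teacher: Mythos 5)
Your proof is correct and follows essentially the same route as the paper: both arguments reduce the lemma to the observation that conjugation by $\nabla^{\pm 1}$ sends a $\C[z]$-linear endomorphism with matrix $A(z)$ to one with matrix $A(z\pm 1)$, and then read off the two composition identities in \eqref{eq:casimir-module} as the matrix equations \eqref{eq:matrix-cond}. Your extra care about the composition-versus-product convention and the direction of the shift is exactly the point the paper's proof also isolates, so there is nothing to add.
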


\begin{proof}
In order to define on $V$ a Casimir representation of semi-level $\mu\in\C$,  we must give  $\rho(L_{-1})\in\End_{-1}(V)$ and $\rho(L_1)\in\End_{1}(V)$ satisfying equation~\eqref{eq:casimir-module}. 
It follows that $\rho(L_{-1})$ and $\rho(L_1)$ are injective maps. 

Since $\nabla$ is an automorphism, it acts by conjugation on $\End_{\C[z]}(V)$; that is, given $A\in\End_{\C[z]}(V)$ one defines $\nabla(A)=\nabla\circ A\circ\nabla^{-1}$ which is a ring automorphism of $\End_{\C[z]}(V)$.  If we identify $A$ with a matrix $A(z)$ via the isomorphism $\End_{\C[z]}(V)\simeq M_{n\times n}(\C[z])$, then $\nabla(A)$ gets identified with the matrix $A(z+1)$.

Now, the specification in \eqref{eq:rho=Anabla} defines a Casimir representation $\rho$ if and only if  $A_{-1},A_1\in\End_{\C[z]}(V)$  fulfill 
$$
A_{-1}\circ\nabla^{-1}(A_1)  =\pi_\mu(z)\Id_V, \quad A_1\circ\nabla(A_{-1}) =\pi_\mu(z+1)\Id_V,$$ 
which,  in terms of the associated matrices, is equivalent to the statement. 
\end{proof}

Let us recall that every matrix $M\in M_{n\times n}(\C[z])$ has a Smith normal form \cite[Thm. 7.7.1]{Hazewinkel}, \cite[Thm. II.9]{Newman}. That is, there exist matrices $U, V\in\GL(n,\C[z])$ and a unique diagonal matrix $S\in M_{n\times n}(\C[z])$, the Smith normal form of $M$, which we also denote by $S(M)$, such that: \begin{enumerate}
\item $M=USV$. 
\item $S$ is a diagonal matrix whose entries are monic polynomials, $S=\operatorname{Diag}\{s_1(z),\ldots,s_n(z)\}$.
\item $s_{i}(z)$ is the $i$-th invariant factor of $M$.
\end{enumerate}

\begin{rem}\label{rem:weakly-equiv-smith-form} The Smith normal forms of two weakly equivalent matrices $M\equiv M'$ are the same. That is, given $M, M'\in M_{n\times n}(\C[z])$ such that there exist $U', V'\in\GL(n,\C[z])$ verifying $M'=U'MV'$, then $S(M)=S(M')$.
\end{rem}

%

\begin{lem}\label{lem:invariantfactor}
Let $\varphi=A_k\circ \nabla^k \in \End_k(V)$ and  let $A_k(z)\in M_{n\times n}(\C[z])$ be the matrix associated to $A_k\in\End_{\C[z]}(V)$. 

Then, the Smith normal form of $A_k(z)$ does not depend on the choice of the isomorphism $V\simeq \C[z]^n$. 
\end{lem}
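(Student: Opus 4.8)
Here is the plan. Fix the abstract $\nabla^k$-semilinear endomorphism $\varphi\in\End_k(V)$ and note first that the decomposition $\varphi=A_k\circ\nabla^k$ is \emph{not} intrinsic: the extension of $\nabla$ to $V$ described in the text is the componentwise one relative to a chosen identification $V\simeq\C[z]^n$, so both $\nabla^k$ and $A_k:=\varphi\circ\nabla^{-k}$ depend on that choice. Let $\beta,\beta'\colon V\xrightarrow{\ \sim\ }\C[z]^n$ be two such identifications and let $P=\beta\circ(\beta')^{-1}\in\GL_{\C[z]}(\C[z]^n)$ be the corresponding change of basis, so that $P=P(z)\in\GL(n,\C[z])$. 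Write $A_k(z)$ and $A_k'(z)$ for the matrices of $A_k$ with respect to $\beta$ and $\beta'$ respectively.

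The core computation is to track how $A_k(z)$ changes. In $\beta$-coordinates $\varphi$ acts on a coordinate vector $v(z)$ by $v(z)\mapsto A_k(z)\,v(z+k)$, because $\nabla^k$ sends $v(z)$ to $v(z+k)$; the analogous formula holds in $\beta'$-coordinates with $A_k'(z)$. Converting $\beta'$-coordinates to $\beta$-coordinates via $P(z)$, applying the first formula, and converting back via $P(z)^{-1}$ yields the transformation law
\begin{equation*}
A_k'(z)\,=\,P(z)^{-1}\,A_k(z)\,P(z+k).
\end{equation*}
The point that makes everything work is that $P(z+k)$ again lies in $\GL(n,\C[z])$: indeed $\det P(z)\in\C^{*}$ since $P(z)$ is $\C[z]$-invertible, hence $\det P(z+k)=\det P(z+k)$ is the same nonzero constant. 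Thus $A_k'(z)$ and $A_k(z)$ are related by left and right multiplication by elements of $\GL(n,\C[z])$, i.e. they are weakly equivalent in the sense of Remark~\ref{rem:weakly-equiv-smith-form}, and therefore $S(A_k'(z))=S(A_k(z))$. This proves the Lemma.

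The only delicate point is precisely this bookkeeping: because $\nabla$ itself is basis-dependent, $A_k(z)$ transforms by the \emph{twisted} conjugation $A\mapsto P(z)^{-1}AP(z+k)$ rather than by ordinary conjugation $A\mapsto P(z)^{-1}AP(z)$, and it would be a mistake to argue as if the latter held (ordinary conjugation does not preserve Smith normal forms). One may double-check the conclusion invariantly: $A_k(V)=\varphi(\nabla^{-k}V)=\varphi(V)$, and $\varphi(V)$ is a $\C[z]$-submodule of $V$ since $z\cdot\varphi(v)=\varphi(zv-kv)$, so $\Coker(A_k)\cong V/\varphi(V)$ is independent of the identification; together with the fixed rank $n$ this $\C[z]$-module determines the invariant factors $s_1(z),\dots,s_n(z)$, hence the Smith normal form.
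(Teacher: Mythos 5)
Your proof is correct, and at bottom it uses the same mechanism as the paper: exhibit the effect of changing the identification $V\simeq\C[z]^n$ as a two-sided multiplication by elements of $\GL(n,\C[z])$ and invoke Remark~\ref{rem:weakly-equiv-smith-form}. The difference lies in the bookkeeping, and yours is the more careful version. The paper's proof treats $A_k$ as a fixed element of $\End_{\C[z]}(V)$ whose matrix therefore changes by ordinary conjugation $A_k'(z)=g\,A_k(z)\,g^{-1}$; this glosses over the fact that the extension of $\nabla$ to $V$ is itself defined through the chosen identification, so that the factor $A_k$ in the decomposition $\varphi=A_k\circ\nabla^k$ also moves when the identification does. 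You keep the semilinear map $\varphi$ as the intrinsic object and correctly derive the twisted law $A_k'(z)=P(z)^{-1}A_k(z)P(z+k)$, noting that $P(z+k)$ is still in $\GL(n,\C[z])$. Both laws are instances of weak equivalence and both yield the lemma, but your version is the one that actually supports the definition that follows the lemma (the invariant factors of $\varphi$ ``via any isomorphism''), since there the whole decomposition, not just the matrix of a fixed operator, depends on the choice. Your closing invariant check, $\Coker(A_k)\cong V/\varphi(V)$ with $\varphi(V)$ a $\C[z]$-submodule, is a nice independent confirmation.

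One small correction to a side remark: the parenthetical claim that ordinary conjugation does not preserve Smith normal forms is false. If $A'=P(z)^{-1}AP(z)$ with $P(z)\in\GL(n,\C[z])$, then $A'$ is obtained from $A$ by left and right multiplication by invertible matrices over $\C[z]$, so $S(A')=S(A)$ by the very Remark you cite. The genuine objection to the untwisted formula is simply that it is not the correct transformation law in this setting, not that it would fail to preserve the Smith form. This slip is confined to an aside and does not affect the validity of your argument.
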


\begin{proof} 
Let $\psi$ be an isomorphism  $V\overset{\sim}\to \C[z]^n$. Any other isomorphism can be written as $g\circ\psi$ with $g\in \GL(n,\C[z])$. By definition the matrix associated to $A_k$ w.r.t. the isomorphism $\psi$ is $A_k(z)=\psi\circ A_k\circ \psi^{-1}$ and the one associated w.r.t. $g\circ\psi$ is $A'_k(z)=(g\circ\psi)\circ A_k\circ (g\circ\psi)^{-1}$ .  Therefore one has
	$$A'_k(z)=(g\circ\psi)\circ A_k\circ (g\circ\psi)^{-1}=g\circ A_k(z)\circ g^{-1},  $$
and the conclusion follows by the previous Remark since $g\in \GL(n,\C[z])$.
\end{proof}

Bearing in mind the previous Lemma, we define the invariant factors of $\varphi\in\End_k(V)$ as the invariant factors of $A(z)$ where $\varphi$ corresponds to $A(z)\nabla^k$ via any isomorphism $V\simeq \C[z]^n$. 

\begin{lem}\label{lem:smith-determination} 
Let $A_{-1}(z),A_1(z)\in M_{n\times n}(\C[z])$ satisfy \eqref{eq:matrix-cond}. Let $S(z)$ denote the Smith normal form of $A_1(z)$. 

Then, there exists a diagonal matrix $T(z)\in M_{n\times n}(\C[z])$ such that:
\begin{enumerate}\item $S(z)T(z+1)=\pi_\mu(z+1)\Id$,
\item $A_{-1}(z)=V(z-1)^{-1}T(z)U(z-1)^{-1}$.
\end{enumerate}
\end{lem}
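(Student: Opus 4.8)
The plan is to start from the two matrix identities in \eqref{eq:matrix-cond} and exploit the Smith factorization $A_1(z)=U(z)S(z)V(z)$ with $U,V\in\GL(n,\C[z])$ and $S(z)=\operatorname{Diag}\{s_1(z),\dots,s_n(z)\}$. First I would substitute this factorization into the first relation $A_{-1}(z)A_1(z-1)=\pi_\mu(z)\Id$. Since $A_1(z-1)=U(z-1)S(z-1)V(z-1)$ and $U(z-1),V(z-1)\in\GL(n,\C[z])$, multiplying on the right by $V(z-1)^{-1}$, then by $S(z-1)^{-1}$ (legitimate over $\C(z)$, since $A_1$ is injective so $\det S\neq0$), then by $U(z-1)^{-1}$, gives a formula for $A_{-1}(z)$. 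Concretely, $A_{-1}(z)=\pi_\mu(z)\,U(z-1)^{-1}S(z-1)^{-1}V(z-1)^{-1}$. Setting $T(z):=\pi_\mu(z)S(z-1)^{-1}$, this is a diagonal matrix over $\C(z)$, and item (2) reads $A_{-1}(z)=V(z-1)^{-1}T(z)U(z-1)^{-1}$, which matches the claim after checking the ordering of $U$ and $V$ is consistent with the statement's convention (a pure bookkeeping point about which of $U,V$ sits on which side in the Smith form).

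Next I would verify relation (1), namely $S(z)T(z+1)=\pi_\mu(z+1)\Id$. From the definition $T(z+1)=\pi_\mu(z+1)S(z)^{-1}$, so $S(z)T(z+1)=\pi_\mu(z+1)\Id$ is immediate \emph{as an identity over $\C(z)$}. The substantive content is that $T(z)$ actually has polynomial entries, i.e.\ $T(z)\in M_{n\times n}(\C[z])$ rather than merely $M_{n\times n}(\C(z))$. Equivalently, each $s_i(z-1)$ divides $\pi_\mu(z)$ in $\C[z]$. This is where Theorem~\ref{thm:polynomialmuCasimir}'s divisibility hypothesis --- or rather its matrix-level shadow --- must be invoked: I would derive it directly from \eqref{eq:matrix-cond}. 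Indeed, $A_{-1}(z)A_1(z-1)=\pi_\mu(z)\Id$ together with $A_1(z)A_{-1}(z+1)=\pi_\mu(z+1)\Id$ shows $A_1(z-1)$ divides $\pi_\mu(z)\Id$ on the appropriate side in $M_{n\times n}(\C[z])$; translating through the Smith form, $s_n(z-1)\mid\pi_\mu(z)$, and since $s_i\mid s_n$ for all $i$ we get $s_i(z-1)\mid\pi_\mu(z)$ for every $i$. Hence $T(z)=\pi_\mu(z)S(z-1)^{-1}$ is a diagonal matrix of polynomials, as required.

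The main obstacle I anticipate is the second point above: making the passage from ``$A_{-1}(z)$ is an honest polynomial matrix'' to ``$T(z)$ is an honest polynomial matrix'' fully rigorous, since a priori $T(z)=U(z-1)A_{-1}(z)V(z-1)$ need not be diagonal unless one tracks carefully how the Smith form of $A_{-1}(z)$ relates to that of $A_1(z)$. The clean way around this is to \emph{define} $T(z):=U(z-1)A_{-1}(z)V(z-1)$ and then show it is diagonal by computing $S(z-1)T(z)=A_1(z-1)A_{-1}(z)$ ... wait, that product has the wrong argument shift; instead one uses $T(z)S(z-1)=U(z-1)A_{-1}(z)A_1(z-1)U(z-1)^{-1}=\pi_\mu(z)\Id$, so $T(z)$ is the inverse of $S(z-1)$ up to the scalar $\pi_\mu(z)$, hence automatically diagonal, and polynomial by the divisibility just established. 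Then (1) follows by shifting $z\mapsto z+1$ and combining with the second identity of \eqref{eq:matrix-cond}.

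Finally I would assemble these pieces: define $T(z):=U(z-1)A_{-1}(z)V(z-1)$, use the first relation of \eqref{eq:matrix-cond} to conclude $T(z)S(z-1)=\pi_\mu(z)\Id$, deduce $T(z)$ is diagonal with $i$-th entry $\pi_\mu(z)/s_i(z-1)$, invoke the divisibility $s_i(z-1)\mid\pi_\mu(z)$ (from \eqref{eq:matrix-cond}) to see $T(z)\in M_{n\times n}(\C[z])$, then read off (2) by solving for $A_{-1}(z)$, and obtain (1) by the shift $z\mapsto z+1$ together with the companion relation $A_1(z)A_{-1}(z+1)=\pi_\mu(z+1)\Id$. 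The argument is short; the only care needed is consistency of the left/right placement of $U$ and $V$ with the conventions fixed just before the lemma.
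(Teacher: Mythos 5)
Your proposal is correct and follows essentially the same route as the paper: substitute the Smith factorization $A_1=USV$ into one of the relations \eqref{eq:matrix-cond}, define $T$ as the sandwich of $A_{-1}$ between the unimodular factors (so it is polynomial by construction), and read off that it is diagonal from $T(z)S(z-1)=\pi_\mu(z)\Id$. The one bookkeeping point you flagged does need the fix you anticipated: since $(USV)^{-1}=V^{-1}S^{-1}U^{-1}$, the correct definition is $T(z):=V(z-1)A_{-1}(z)U(z-1)$ rather than $U(z-1)A_{-1}(z)V(z-1)$ (with your ordering the product $T(z)S(z-1)$ does not collapse to $\pi_\mu(z)\Id$); after that swap both items follow exactly as you describe, in agreement with the paper's own computation, which conjugates the second relation of \eqref{eq:matrix-cond} by $U(z)$ to get $S(z)\,V(z)A_{-1}(z+1)U(z)=\pi_\mu(z+1)\Id$.
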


\begin{proof} 
Let $U(z), V(z)\in\GL(n,\C[z])$ be such that  $A_1(z)=U(z)S(z)V(z)$. Plugging $A_1(z)=U(z)S(z)V(z)$ into the second equation of (\ref{eq:matrix-cond}) we get $U(z)S(z)V(z)A_{-1}(z+1) =\pi_\mu(z+1)\Id.$ This implies $$S(z)V(z)A_{-1}(z+1)U(z) =\pi_\mu(z+1)\Id.$$ Since $S(z)$ is diagonal, it follows that $T(z+1)=V(z)A_{-1}(z+1)U(z)$ is also a diagonal matriz and the claim follows.
\end{proof}

\begin{rem}
If one has 
$$S(z)=\diag\{s_1(z),\ldots,s_n(z)\},\quad T(z)=\diag\{t_1(z),\ldots,t_n(z)\},$$ where $s_i(z), t_i(z)\in\C[z]$, then it follows that $$s_i(z)\cdot t_i(z+1)=\pi_\mu(z+1).$$ 
\end{rem}

\begin{proof}[Proof of Thm~\ref{thm:polynomialmuCasimir}]
First, note that the l.h.s. of the statement is well defined due to Lemma~\ref{lem:invariantfactor}. 

Let $\rho\in \sl(2)_\mu\mathrm{-Mod}(V)$ be given and let $A_{-1},A_1$ be the endomorphisms obtained as in Lemma~\ref{lem:A1}. The $n$-th invariant factor of $A_1$ is the last diagonal entry of $S(z)$ which, by Lemma~\ref{lem:smith-determination}, divides $\pi_{\mu}(z+1)$. It is straightforward to see that $\varphi:=A_1$ is the desired endomorphism. 

For the converse, it suffices to show that 
	$$ \pi_\mu(z)\,  \varphi^{-1} \,\in\, \End_{\C[z]}(V)$$
 but this follows from the fact that the $n$-th invariant factor of $\varphi$ divides $\pi_{\mu}(z+1)$. This agrees with the second item of Lemma~\ref{lem:smith-determination}.
\end{proof}

%
%

\begin{defn} Given any polynomial Casimir representation $\rho\in \sl(2)\mathrm{-Mod}(V,\mu),$ we define its Smith type $S(\rho)\in M_{n\times n}(\C[z])$ as the Smith type of $\rho(L_1)$, i.e.  $S(\rho):=S(\rho(L_1))$.
\end{defn}

By Remark \ref{rem:weakly-equiv-smith-form} we immediately get the following result.

\begin{prop} Polynomial Casimir representations that are weakly equivalent have the same Smith type.
\end{prop}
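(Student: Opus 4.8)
The plan is to trace how the Smith normal form behaves under the group action defined in the preceding Proposition. Recall that two weakly equivalent representations $\rho, \rho'$ are related by $\rho' = \rho^{(g,h)}$ for some $(g,h)\in\GL_{\C[z]}(V)\times\GL_{\C[z]}(V)$, and that by definition $S(\rho) = S(\rho(L_1))$. So the entire statement reduces to understanding the matrix of $\rho^{(g,h)}(L_1)$ in terms of the matrix of $\rho(L_1)$.

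First I would write $\rho(L_1) = A_1\circ\nabla$ with $A_1\in\End_{\C[z]}(V)$ as in Lemma~\ref{lem:A1}, so that the associated matrix is $A_1(z)$ and, by the definition given after Lemma~\ref{lem:invariantfactor}, the Smith type $S(\rho)$ is the Smith normal form $S(A_1(z))$. Next I would compute $\rho^{(g,h)}(L_1) = g\circ\rho(L_1)\circ h^{-1} = g\circ A_1\circ\nabla\circ h^{-1}$. Since $\nabla$ acts by conjugation on $\End_{\C[z]}(V)$ sending the matrix $B(z)$ to $B(z+1)$ (as recalled in the proof of Lemma~\ref{lem:A1(z)}), one has $\nabla\circ h^{-1} = (\nabla\circ h^{-1}\circ\nabla^{-1})\circ\nabla = h^{-1}_{\nabla}\circ\nabla$ where $h^{-1}_\nabla$ is the endomorphism with matrix $h(z+1)^{-1}$. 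Hence $\rho^{(g,h)}(L_1) = \big(g\circ A_1\circ h^{-1}_\nabla\big)\circ\nabla$, and the associated matrix of its $\End_{\C[z]}$-part is $g(z)A_1(z)h(z+1)^{-1}$.

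Now the key point: both $g(z)$ and $h(z+1)^{-1}$ lie in $\GL(n,\C[z])$ — the latter because $h(z)\in\GL(n,\C[z])$ and substituting $z\mapsto z+1$ is a ring automorphism of $\C[z]$ preserving units of the matrix ring. Therefore $g(z)A_1(z)h(z+1)^{-1}$ is obtained from $A_1(z)$ by left and right multiplication by invertible polynomial matrices, i.e. it is weakly equivalent to $A_1(z)$ in the sense of Remark~\ref{rem:weakly-equiv-smith-form}. By that Remark, $S\big(g(z)A_1(z)h(z+1)^{-1}\big) = S(A_1(z))$, which says exactly $S(\rho^{(g,h)}) = S(\rho)$, completing the proof.

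There is no real obstacle here; the only mild subtlety worth stating explicitly is the observation that $h(z+1)^{-1}$ is again a polynomial matrix with polynomial inverse, which is why the argument works for the full $\GL_{\C[z]}(V)\times\GL_{\C[z]}(V)$ action and not merely for substitutions that happen to commute with $\nabla$. Everything else is a direct invocation of Lemma~\ref{lem:A1}, the conjugation formula from the proof of Lemma~\ref{lem:A1(z)}, and Remark~\ref{rem:weakly-equiv-smith-form}.
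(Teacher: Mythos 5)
Your argument is correct and is exactly the computation the paper leaves implicit when it says the result follows "immediately" from Remark~\ref{rem:weakly-equiv-smith-form}: you identify the matrix of $\rho^{(g,h)}(L_1)$ as $g(z)A_1(z)h(z+1)^{-1}$ and invoke invariance of the Smith normal form under left and right multiplication by elements of $\GL(n,\C[z])$. Your explicit note that $h(z+1)^{-1}$ remains an invertible polynomial matrix is the one detail worth recording, and it is handled correctly.
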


Since equivalent representations are weakly equivalent, we conclude that the Smith type of any two equivalent polynomial Casimir representations is the same.  This shows that the space of equivalence classes of polynomial Casimir representations is stratified according to the Smith type. More precisely,  recalling that $\pi_\mu(z+1)=\alpha_\mu(z+1)\cdot\beta_\mu(z+1)$, with $\alpha_\mu(z+1)=z+\mu+1$, $\beta_\mu(z+1)=z-\mu$, one has:

\begin{prop}\label{prop:SmithTypesStrat} The possible Smith types for the polynomial Casimir representations of semi level $\mu$ on a free $\C[z]$-module of rank $n$ are{\small  \begin{align*}
S_+(i,j,k) &=\diag(1,\stackrel{^{i)}}{\ldots},1,\alpha_\mu(z+1),\stackrel{^{j)}}{\ldots},\alpha_\mu(z+1),\pi_\mu(z+1),\stackrel{^{k)}}{\ldots},\pi_\mu(z+1)),\\S_0(l,m) &=\diag(1,\stackrel{^{l)}}{\ldots},1,\pi_\mu(z+1),\stackrel{^{m)}}{\ldots},\pi_\mu(z+1)),\\ S_-(i,j,k) &=\diag(1,\stackrel{^{i)}}{\ldots},1,\beta_\mu(z+1),\stackrel{^{j)}}{\ldots},\beta_\mu(z+1),\pi_\mu(z+1),\stackrel{^{k)}}{\ldots},\pi_\mu(z+1)),\end{align*}}for some non negative integers $i,j,k, l, m$ such that $i+j+k=n$, $j>0$, $l+m=n$.
\end{prop}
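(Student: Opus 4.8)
The plan is to combine Theorem~\ref{thm:polynomialmuCasimir} with the classification of possible Smith normal forms of a matrix whose invariant factors are constrained by the divisibility condition. First I would recall, via Theorem~\ref{thm:polynomialmuCasimir}, that a polynomial Casimir representation $\rho$ of semi-level $\mu$ on $V\simeq\C[z]^n$ is determined by a semilinear endomorphism $\varphi=\rho(L_1)\in\End_1(V)$ whose $n$-th invariant factor divides $\pi_\mu(z+1)=\alpha_\mu(z+1)\beta_\mu(z+1)$, where $\alpha_\mu(z+1)=z+\mu+1$ and $\beta_\mu(z+1)=z-\mu$. Since $\mathfrak{Re}(\mu)\geq-\tfrac12$, the two linear factors $\alpha_\mu(z+1)$ and $\beta_\mu(z+1)$ are distinct (they coincide only when $\mu=-\tfrac12$, a degenerate case that must be handled separately: there $\pi_\mu(z+1)=(z+\tfrac12)^2$ and the argument below simplifies). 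Writing $S(\rho)=S(A_1)=\operatorname{Diag}\{s_1(z),\ldots,s_n(z)\}$ for the Smith normal form of the matrix $A_1(z)$ attached to $\varphi$, the divisibility chain $s_1\mid s_2\mid\cdots\mid s_n$ together with $s_n\mid\pi_\mu(z+1)$ forces every $s_i(z)$ to be a monic divisor of $(z+\mu+1)(z-\mu)$; hence each $s_i(z)\in\{1,\ \alpha_\mu(z+1),\ \beta_\mu(z+1),\ \pi_\mu(z+1)\}$.

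Next I would exploit the divisibility chain to rule out the only genuinely mixed possibility. If some invariant factor equals $\alpha_\mu(z+1)$ and another (necessarily later in the chain) equals $\beta_\mu(z+1)$, then the earlier would have to divide the later, which is false since they are coprime; so $\alpha_\mu(z+1)$ and $\beta_\mu(z+1)$ cannot both occur as invariant factors. Consequently the diagonal $(s_1,\ldots,s_n)$, read in increasing order of divisibility, must be a run of $1$'s, followed by a run of copies of a single linear factor (either $\alpha_\mu(z+1)$ or $\beta_\mu(z+1)$, possibly empty), followed by a run of copies of $\pi_\mu(z+1)$. Naming the lengths of these three runs gives exactly $S_+(i,j,k)$ when the middle factor is $\alpha_\mu(z+1)$ with $j>0$, $S_-(i,j,k)$ when it is $\beta_\mu(z+1)$ with $j>0$, and $S_0(l,m)$ when the middle run is empty; the constraints $i+j+k=n$ and $l+m=n$ are just the total count of diagonal entries.

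Finally I would argue that each listed type is actually realized, so that the list is exhaustive and not merely an upper bound. For any choice of the run lengths one can write down a diagonal matrix $A_1(z)$ equal to the proposed Smith form, set $A_{-1}(z):=\pi_\mu(z)A_1(z-1)^{-1}$ — which lies in $M_{n\times n}(\C[z])$ precisely because each diagonal entry of $A_1(z-1)$ divides $\pi_\mu(z)$, this being the shifted form of the divisibility hypothesis — and check that the pair $(A_{-1}(z),A_1(z))$ satisfies \eqref{eq:matrix-cond}; by Lemma~\ref{lem:A1(z)} and Theorem~\ref{thm:polynomialmuCasimir} this yields a polynomial Casimir representation of semi-level $\mu$ with the prescribed Smith type. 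I do not expect any serious obstacle here: the heart of the argument is the elementary observation that a divisibility chain inside the divisor lattice of $(z+\mu+1)(z-\mu)$ cannot mix the two coprime prime factors, and the only mild subtlety is remembering to treat $\mu=-\tfrac12$ (where $\pi_\mu(z+1)$ is a square of a single linear factor, so only the family $S_0$ together with the "partial square" diagonals survives, which one should note is consistent with the stated list upon reinterpreting $\alpha_\mu=\beta_\mu$).
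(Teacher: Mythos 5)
Your proposal is correct and follows essentially the route the paper intends: the paper states this proposition without a separate proof, relying on Theorem~\ref{thm:polynomialmuCasimir} together with the fact that the invariant factors form a divisibility chain ending in a divisor of $\pi_\mu(z+1)=\alpha_\mu(z+1)\beta_\mu(z+1)$. Your extra touches --- ruling out chains mixing $\alpha_\mu(z+1)$ and $\beta_\mu(z+1)$ by coprimality, realizing each listed type via a diagonal $A_1(z)$ with $A_{-1}(z)=\pi_\mu(z)A_1(z-1)^{-1}$, and flagging the degenerate case $\mu=-\frac12$ --- are all sound and in fact make the argument more complete than what the paper records.
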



We denote $\mathcal S(n,\mu)=\mathcal S_-(n,\mu)\coprod\mathcal S_0(n,\mu)\coprod\mathcal S_+(n,\mu)$, where {\tiny$$\mathcal S_-(n,\mu)=\coprod_{\superpuesto{i+j+k=n,}{i\geq 0, j>0,k\geq 0}}S_-(i,j,k),\ \mathcal S_0(n,\mu)=\coprod_{\superpuesto{l+m=n}{l\geq0, m\geq 0}}S_0(l,m),\ \mathcal S_+(n,\mu)=\coprod_{\superpuesto{i+j+k=n,}{i\geq 0, j>0,k\geq 0}}S_-(i,j,k).$$} 
\begin{rem} A simple computation shows that $\#S_0(n,\mu)=n+1$, $\# S_-(n,\mu)=\#S_+(n,\mu)=\frac{n(n+1)}{2}$  and thus  $\mathcal S(n,\mu)$ has $(n+1)^2$ elements.
\end{rem}

\begin{prop} Let $V$ be a free $\C[z]$-module of finite rank $n$. There is a surjective map $$\GL(n,\C[z])\times \mathcal S(n,\mu)\times \GL(n,\C[z])\xrightarrow{\Phi} \sl(2)_\mu\mathrm{-Mod}(V)$$ defined by mapping $(U(z),S(z),V(z))$ to the polynomial representation defined by $\rho(L_1)=U(z)S(z)V(z)\circ\nabla$ w.r.t. a basis of $V$.
\end{prop}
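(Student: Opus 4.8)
The plan is to obtain everything from Theorem~\ref{thm:polynomialmuCasimir} together with the existence and uniqueness of the Smith normal form, so that only a single combinatorial point remains. First I would check that $\Phi$ is well defined: given a triple $(U(z),S(z),V(z))$, put $\varphi:=U(z)S(z)V(z)\circ\nabla\in\End_1(V)$. Since $U(z),V(z)\in\GL(n,\C[z])$, Remark~\ref{rem:weakly-equiv-smith-form} shows that $U(z)S(z)V(z)$ has the same Smith normal form as the diagonal matrix $S(z)$; as the diagonal entries of any element of $\mathcal S(n,\mu)$ are monic and arranged in a divisibility chain, $S(z)$ is already its own Smith normal form. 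Hence the invariant factors of $\varphi$ are exactly the diagonal entries of $S(z)$, and in particular the $n$-th invariant factor is one of $1,\ \alpha_\mu(z+1),\ \beta_\mu(z+1),\ \pi_\mu(z+1)$, each of which divides $\pi_\mu(z+1)=\alpha_\mu(z+1)\beta_\mu(z+1)$. By Theorem~\ref{thm:polynomialmuCasimir}, $\varphi$ then determines a genuine $\rho\in\sl(2)_\mu\mathrm{-Mod}(V)$ (with $\rho(L_{-1})=\pi_\mu(z)\varphi^{-1}$), so $\Phi$ makes sense.

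For surjectivity, take $\rho\in\sl(2)_\mu\mathrm{-Mod}(V)$. By Lemma~\ref{lem:A1} write $\rho(L_1)=A_1\circ\nabla$ with $A_1\in\End_{\C[z]}(V)$, and by Theorem~\ref{thm:polynomialmuCasimir} the matrix $A_1(z)$ of $A_1$ in the fixed basis has $n$-th invariant factor dividing $\pi_\mu(z+1)$. Choosing a Smith decomposition $A_1(z)=U(z)S(z)V(z)$ with $U(z),V(z)\in\GL(n,\C[z])$ and $S(z)=\diag\{s_1(z),\dots,s_n(z)\}$, one has $\Phi(U(z),S(z),V(z))=\rho$ by construction, so it remains only to prove that $S(z)\in\mathcal S(n,\mu)$.

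This is the one step that genuinely needs an argument. We have $s_1(z)\mid\cdots\mid s_n(z)$, each $s_i(z)$ monic, and $s_n(z)\mid\pi_\mu(z+1)$; hence every $s_i(z)$ is a monic divisor of $\pi_\mu(z+1)$. If $\mu\neq-\frac12$, then $\alpha_\mu(z+1)=z+\mu+1$ and $\beta_\mu(z+1)=z-\mu$ are coprime, so the monic divisors of $\pi_\mu(z+1)$ are exactly $1,\ \alpha_\mu(z+1),\ \beta_\mu(z+1),\ \pi_\mu(z+1)$, and a divisibility chain drawn from this set cannot use both $\alpha_\mu(z+1)$ and $\beta_\mu(z+1)$. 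Thus $(s_1,\dots,s_n)$ is of type $S_0(l,m)$ if it uses neither of them, of type $S_+(i,j,k)$ with $j>0$ if it uses $\alpha_\mu(z+1)$, and of type $S_-(i,j,k)$ with $j>0$ if it uses $\beta_\mu(z+1)$; that is, $S(z)\in\mathcal S(n,\mu)$. The degenerate case $\mu=-\frac12$, where $\alpha_\mu(z+1)=\beta_\mu(z+1)=z+\frac12$ and $\pi_\mu(z+1)=(z+\frac12)^2$, is handled identically, the families $S_+$ and $S_-$ merely coinciding. I expect this last paragraph to be the only real obstacle: once Theorem~\ref{thm:polynomialmuCasimir} is available the rest is bookkeeping, and the sole subtlety is that the coprimality of $\alpha_\mu(z+1)$ and $\beta_\mu(z+1)$ is what confines an invariant-factor chain to one of the three families of Proposition~\ref{prop:SmithTypesStrat}, together with the easy separate treatment of $\mu=-\frac12$.
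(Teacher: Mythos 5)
Your proof is correct and follows the same route the paper intends: the paper states this proposition without a separate proof, as an immediate consequence of Theorem~\ref{thm:polynomialmuCasimir} together with the existence of the Smith decomposition and Proposition~\ref{prop:SmithTypesStrat}. Your final paragraph essentially re-derives Proposition~\ref{prop:SmithTypesStrat} (including the coprimality of $\alpha_\mu(z+1)$ and $\beta_\mu(z+1)$ and the degenerate case $\mu=-\tfrac12$), which you could simply cite instead.
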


\begin{rem}\label{rem:number-smith-types}
Notice that the map $\Phi$ is not injective. If one considers the equivalence of representations, $\rho\sim\rho'$ if there exists $g\in\GL_{\C[z]}(V)$ such that $\rho'=\rho^{(g,g)}$, then it follows that every equivalence class $[\rho]$ admits a unique representative of the form either $\rho(L_1)=U(z)S(z)\circ\nabla$ or $\rho(L_1)=S(z)V(z)\circ\nabla$.
\end{rem}


\section{Duality of polynomial Casimir representations}\label{sec:duality}

In this section we show that polynomial Casimir representations admit a natural duality that is compatible with irreducible representations. We also study its behavior with respect to the Smith type.

\begin{defn} Let $V$ be a free $\C[z]$-module of finite rank and let $V^*=\Hom_{\C[z]}(V,\C[z])$ be its dual $\C[z]$-module. For any representation $\rho\in\sl(2)_\mu\mathrm{-Mod}(V)$ and any $\omega\in V^*$, we define 
	$$
	\rho^{\vee}(L_{-1})(\omega) \,:=\nabla^{-1}\circ\omega\circ\rho(L_1)\quad ,  \quad
	\rho^{\vee}(L_{1})(\omega) \,:=\,  \nabla\circ\omega\circ\rho(L_{-1}).$$
\end{defn}

\begin{prop} For any representation $\rho\in\sl(2)_\mu\mathrm{-Mod}(V)$ one has that $\rho^\vee$ is a representation that belongs to $\sl(2)_\mu\mathrm{-Mod}(V^*)$.
\end{prop}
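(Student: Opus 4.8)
The plan is to verify directly that the pair of operators $\rho^\vee(L_{-1}), \rho^\vee(L_1)$ defined on $V^*$ has the three required features: (i) $\rho^\vee(L_{-1})\in\End_{-1}(V^*)$ and $\rho^\vee(L_1)\in\End_1(V^*)$, i.e. they are semilinear of the correct weight; and (ii) they satisfy the defining Casimir relations \eqref{eq:casimir-module} for semi-level $\mu$, namely $\rho^\vee(L_{-1})\circ\rho^\vee(L_1)=\pi_\mu(z)\Id_{V^*}$ and $\rho^\vee(L_1)\circ\rho^\vee(L_{-1})=\pi_\mu(z+1)\Id_{V^*}$. Once these hold, by definition $\rho^\vee\in\sl(2)_\mu\mathrm{-Mod}(V^*)$ (and the commutation relation $[\rho^\vee(L_{-1}),\rho^\vee(L_1)]=-2z$ is automatic, as noted after \eqref{eq:casimir-module}).

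First I would set up the semilinearity check. Recall the $\C[z]$-module structure on $V^*$ is $(z\cdot\omega)(v)=z\cdot\omega(v)$, and on composites $\nabla^k\circ\omega$ denotes the map $v\mapsto \nabla^k(\omega(v))$. For $\rho^\vee(L_1)(\omega)=\nabla\circ\omega\circ\rho(L_{-1})$, I compute $\rho^\vee(L_1)(z\cdot\omega)=\nabla\circ(z\cdot\omega)\circ\rho(L_{-1})$; since $\rho(L_{-1})\in\End_{-1}(V)$ one has $\omega(\rho(L_{-1})(v))$ paired against the $z$-action, and pushing the $z$ through and applying $\nabla$ turns $z$ into $z+1$, giving $(z+1)\cdot\rho^\vee(L_1)(\omega)$; hence $\rho^\vee(L_1)\in\End_1(V^*)$. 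The computation for $\rho^\vee(L_{-1})$ is the mirror image, using $\nabla^{-1}$ and $\rho(L_1)\in\End_1(V)$, and yields $\rho^\vee(L_{-1})\in\End_{-1}(V^*)$.

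Next I would verify the Casimir relations. Unwinding definitions,
$$
\bigl(\rho^\vee(L_{-1})\circ\rho^\vee(L_1)\bigr)(\omega)
=\nabla^{-1}\circ\bigl(\nabla\circ\omega\circ\rho(L_{-1})\bigr)\circ\rho(L_1)
=\omega\circ\bigl(\rho(L_{-1})\circ\rho(L_1)\bigr)
=\omega\circ\pi_\mu(z)\Id_V,
$$
using \eqref{eq:casimir-module} for $\rho$; and $\omega\circ\pi_\mu(z)\Id_V=\pi_\mu(z)\cdot\omega$ by $\C[z]$-linearity of $\omega$, so $\rho^\vee(L_{-1})\circ\rho^\vee(L_1)=\pi_\mu(z)\Id_{V^*}$. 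Similarly
$$
\bigl(\rho^\vee(L_1)\circ\rho^\vee(L_{-1})\bigr)(\omega)
=\nabla\circ\bigl(\nabla^{-1}\circ\omega\circ\rho(L_1)\bigr)\circ\rho(L_{-1})
=\omega\circ\bigl(\rho(L_1)\circ\rho(L_{-1})\bigr)=\pi_\mu(z+1)\cdot\omega,
$$
again by \eqref{eq:casimir-module}. This is exactly \eqref{eq:casimir-module} for $\rho^\vee$, so $\rho^\vee\in\sl(2)_\mu\mathrm{-Mod}(V^*)$.

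The only real subtlety — the point I expect to require the most care — is bookkeeping the interaction between the twist $\nabla^{\pm1}$ and the $\C[z]$-action on $V^*$: one must be consistent about whether $\nabla$ acts on $V^*$ by $(\nabla\omega)(v)=\nabla(\omega(v))$ or by precomposition with $\nabla^{-1}$ on $V$, and the semilinear weights $-1$ and $+1$ have to come out on the correct side. Everything else is a formal manipulation of composites, using only $\C[z]$-linearity of elements of $V^*$ and relation \eqref{eq:casimir-module} for $\rho$; no finiteness or freeness of $V$ is needed beyond what guarantees $V^*$ is again a suitable $\C[z]$-module.
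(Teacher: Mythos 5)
Your proof is correct and follows essentially the same route as the paper's: a routine degree count in the graded algebra of semilinear endomorphisms to see that $\rho^\vee(L_{\pm 1})$ are well defined and of the right weight, followed by the same one-line composite computation deriving the relations \eqref{eq:casimir-module} for $\rho^\vee$ from those for $\rho$.
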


\begin{proof} Since semilinear endomorphisms form a $\Z$-graded algebra, it is easy to check that for any $\omega\in V^*$ one has $\rho^{\vee}(L_{-1})(\omega), \rho^{\vee}(L_{1})(\omega)\in V^*$. On the other hand  \begin{align*}\rho^{\vee}(L_{-1})(\rho^{\vee}(L_{1})(\omega))=\nabla^{-1}\circ \rho^{\vee}(L_{1})(\omega)\circ \rho(L_{1})=\\=\nabla^{-1}\circ\nabla\circ\omega\circ\rho(L_{-1})\circ\rho(L_{1})=\omega(\pi_\mu(z)\Id_V)=\pi_\mu(z)\omega.
\end{align*} In a similar way one sees that $\rho^{\vee}(L_{1})\circ \rho^{\vee}(L_{-1})=\pi_\mu(z+1)\Id_{V^*}$.
\end{proof}

\begin{defn} Given a representation $\rho\in\sl(2)_\mu\mathrm{-Mod}(V)$ we say that $\rho^\vee\in \sl(2)_\mu\mathrm{-Mod}(V^*)$ is its dual representation.  
\end{defn}

\begin{thm}\label{thm:irre-dual} A representation $\rho\in\sl(2)_\mu\mathrm{-Mod}(V)$ is irreducible if and only if its dual representation $\rho^\vee\in \sl(2)_\mu\mathrm{-Mod}(V^*)$ is irreducible.
\end{thm}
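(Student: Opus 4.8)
The plan is to reduce the statement to a question about subrepresentations and then exploit the duality functor's exactness and the explicit form of $\rho^\vee$. First I would observe that a subrepresentation of $V_\rho$ is the same as a $\C[z]$-submodule $W\subseteq V$ that is preserved by both $\rho(L_{-1})$ and $\rho(L_1)$; since both are injective, we should think of these as semilinear operators and use the formulas $\rho(L_{-1})=\pi_\mu(z)\varphi^{-1}$, $\rho(L_1)=\varphi$ from Theorem~\ref{thm:polynomialmuCasimir}. The key point is that, because $V$ is free of finite rank over the PID $\C[z]$, a $\C[z]$-submodule $W\subseteq V$ gives rise, by orthogonal annihilation, to the submodule $W^{\perp}=\{\omega\in V^*\colon \omega(W)=0\}\subseteq V^*$, and this operation is an inclusion-reversing bijection between saturated $\C[z]$-submodules of $V$ and saturated $\C[z]$-submodules of $V^*$ (here "saturated" means the quotient is torsion free, equivalently the submodule is itself a direct summand up to torsion). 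Since $V_\rho$ is torsion free, every nonzero subrepresentation is a nonzero $\C[z]$-submodule, and I would reduce to checking that $W$ is $\rho$-stable iff $W^{\perp}$ is $\rho^{\vee}$-stable.

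The heart of the argument is the compatibility computation. Given $W\subseteq V$ stable under $\rho(L_{\pm1})$, I would show directly from the definition of $\rho^\vee$ that for $\omega\in W^\perp$ one has $\rho^{\vee}(L_{1})(\omega)=\nabla\circ\omega\circ\rho(L_{-1})$ annihilates $W$: indeed for $w\in W$, $\rho(L_{-1})(w)\in W$, so $\omega(\rho(L_{-1})(w))=0$, and applying $\nabla$ keeps it zero; similarly $\rho^{\vee}(L_{-1})(\omega)$ annihilates $W$ using stability of $W$ under $\rho(L_1)$. Hence $W^{\perp}$ is $\rho^{\vee}$-stable. For the converse direction, one uses the canonical identification $V\simeq V^{**}$ of $\C[z]$-modules (valid since $V$ is free of finite rank), together with the fact — which I would verify by the same kind of bookkeeping with the graded algebra $\End_\bullet(V)$ — that $(\rho^{\vee})^{\vee}$ is identified with $\rho$ under $V\simeq V^{**}$; then the first direction applied to $\rho^\vee$ shows that stability of $W^\perp$ forces stability of $(W^{\perp})^{\perp}=W$ (the last equality again because $W$ is saturated, which we may assume since a subrepresentation of a torsion free module can be saturated without changing simplicity considerations, the saturation of a subrepresentation being again a subrepresentation as $\rho(L_{\pm1})$ are $\C[z]$-semilinear).

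Putting these together: $\rho$ is irreducible iff $V_\rho$ has no proper nonzero saturated subrepresentation iff $V^*_{\rho^\vee}$ has no proper nonzero saturated subrepresentation iff $\rho^\vee$ is irreducible, where I would include a short remark that for a torsion free finite rank $\sl(2)$-module, having no proper nonzero \emph{saturated} subrepresentation is equivalent to having no proper nonzero subrepresentation at all, because the saturation of any subrepresentation is a subrepresentation with torsion-free quotient whose rank is strictly smaller only if the original had smaller rank — more precisely any nonzero subrepresentation has a nonzero saturation which is proper if the original is proper. I expect the main obstacle to be pinning down this last equivalence cleanly, i.e.\ checking that the saturation (the preimage in $V$ of the torsion submodule of $V/W$) of a subrepresentation $W$ is again stable under the semilinear operators $\rho(L_{\pm1})$; this follows because $\rho(L_{\pm1})$ are $\C[z]$-semilinear and injective, so they send torsion elements of quotients to torsion elements, but it requires a careful statement since $\nabla$ shifts $z$. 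Everything else is a direct diagram chase with the duality formulas.
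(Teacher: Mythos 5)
Your strategy is the same as the paper's: pass between invariant submodules of $V$ and of $V^*$ via the annihilator (the paper takes a $\rho^\vee$-invariant $W\subset V^*$ and checks that $W^\circ\subset V$ is $\rho$-invariant, which is the mirror image of your computation), and the stability check you perform with the formulas for $\rho^\vee(L_{\pm 1})$ is exactly the ``one easily checks'' step of the paper. Your extra bookkeeping with $(\rho^\vee)^\vee\simeq\rho$ under $V\simeq V^{**}$ is correct and lets you avoid repeating the argument for the converse implication.

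There is, however, one genuine gap, and it sits precisely at the point you yourself flag as the main obstacle. Your reduction from ``no proper nonzero subrepresentation'' to ``no proper nonzero \emph{saturated} subrepresentation'' rests on the claim that the saturation of a proper nonzero subrepresentation is again proper. That is false in general: a proper subrepresentation $W\subset V$ of \emph{full} rank (equivalently, one with $V/W$ a nonzero torsion module) has saturation equal to all of $V$. Such $W$ correspond to nonzero finite-dimensional quotients of $V_\rho$; since these must be Casimir of semi-level $\mu$ and hence sums of $\mathbf{V}^{(m)}$ with $m=2\mu+1$, they are excluded whenever $2\mu+1$ is not a positive integer, but for $\mu\in\tfrac12\Z_{\geq 0}$ your argument (and, equally, the annihilator argument) only proves that $V^*$ has no invariant submodule of rank strictly between $0$ and $n$; it does not rule out a full-rank proper invariant submodule, because the annihilator of such a submodule is $\{0\}$ and the correspondence $W\mapsto W^{\perp}$ cannot distinguish it from $V^*$ itself. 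To be fair, the paper's own proof elides exactly the same point (the inference ``$W^\circ=\{0\}$, and $V^*$ is free of finite type, therefore $W=V^*$'' is the identical lacuna), so you have matched the published argument; but if you want a complete proof you need a separate argument excluding full-rank proper subrepresentations, e.g. by showing that an irreducible polynomial Casimir module cannot have its dual surject onto a finite-dimensional module, rather than the incorrect assertion about saturations.
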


\begin{proof}  Let us suppose that $\rho$ is irreducible and let $W\subset V^*$ be a $\rho^\vee$-invariant subspace. In particular, $W$ is a $\C[z]$-submodule and therefore we may consider its annihilator $$W^\circ=\{v\in V\colon \omega(v)=0,\quad\forall\ \omega\in W\},$$ that is a $\C[z]$-submodule of $V$. One easily checks that $W^\circ$ is invariant under $\rho(L_1)$ and $\rho(L_1)$ and therefore it is an $\sl(2)$-submodule of $V$. Since $V$ is irreducible one must have that $W^\circ$ is either $\{0\}$ or $V$, and therefore since $V^*$ is a free $\C[z]$-module of finite type this implies that $W$ is either $\{0\}$ or $V^*$. The other implication is proved in a similar way.
\end{proof}

\begin{prop}
Let   $\alpha=\sum_{i=0}^n \alpha_i(z) X^i \in {\mathbb B}$ be an irreducible element such that $V:= {\mathbb A}/({\mathbb A}\cap{\mathbb B}\alpha)$ is a polynomial $\sl(2)$-representation and $\alpha_0(z)\cdot\alpha_n(z)\neq 0$.  If $V^*$ denotes the dual representation of $V$, then it holds that $V^*$  is a simple torsion free $\sl(2)$-module and there exists an isomorphism 
	\[ 
	V^*\,\simeq \, 
	{\mathbb A}/({\mathbb A}\cap{\mathbb B}\alpha^*),
	\]
where $\alpha^*$ is the irreducible element given by	
	\[
	\alpha^*\,=\, \sum_{i=0}^n \Big( 
	(\pi_{\mu}(z+1) \nabla)^{2i-n} \alpha_{n-i}(z) \Big) X^i.
	\]	
\end{prop}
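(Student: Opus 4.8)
The plan is to reduce the statement to a computation inside the euclidean algebra $\mathbb{B}$, through the classification functor $F=\C(z)\otimes_{\C[z]}(-)$, and then to identify explicitly the $\mathbb{B}$-module structure on the $\C(z)$-linear dual of $\mathbb{B}/(\mathbb{B}\alpha)$.

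I would first dispose of the easy assertions. Since $V$ is polynomial it is a free $\C[z]$-module of finite rank, hence so is its $\C[z]$-dual $V^*$; in particular $V^*$ is torsion free. Moreover $V=V_\alpha$ is simple because $\alpha$ is irreducible (Proposition~\ref{prop:Bavula1} together with the bijection of Theorem~\ref{thm:isom-classes}), so the representation $\rho$ is irreducible, and Theorem~\ref{thm:irre-dual} then gives that $\rho^\vee$ is irreducible as well; thus $V^*$ is a simple torsion free Casimir $\sl(2)$-module of semi-level $\mu$. By Theorem~\ref{thm:isom-classes} its isomorphism class is determined by that of the simple $\mathbb{B}$-module $F(V^*)$, so it suffices to prove that $F(V^*)\simeq\mathbb{B}/(\mathbb{B}\alpha^*)$ as $\mathbb{B}$-modules, with $\alpha^*$ as in the statement. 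Granting this, $\alpha^*$ is automatically irreducible (because $F(V^*)$ is simple), and Proposition~\ref{prop:Bavula1} yields $V^*\simeq\mathbb{A}/(\mathbb{A}\cap\mathbb{B}\alpha^*)$, after replacing $\alpha^*$ by the associate element of $\mathbb{A}^+$ it determines, which does not alter $\mathbb{A}\cap\mathbb{B}\alpha^*$.

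To compute $F(V^*)$, I would use that, $\C[z]$ being Noetherian and $V$ finitely generated, localization commutes with $\Hom$: there is a natural $\C(z)$-linear identification $F(V^*)\simeq\Hom_{\C(z)}\big(F(V),\C(z)\big)$ under which the $\mathbb{B}$-action coming from $\rho^\vee$ is the one obtained by applying the defining formulas of $\rho^\vee$ to the localized representation. Also $F(V)\simeq\mathbb{B}/(\mathbb{B}\alpha)$ by Proposition~\ref{prop:Bavula1}, using that $E_\alpha$ is torsion, so $F(E_\alpha)=0$. Let $e_i:=\overline{X^i}\in\mathbb{B}/(\mathbb{B}\alpha)$ for $0\le i\le n-1$; since $\alpha_0\alpha_n\ne 0$ one has $\operatorname{length}_X(\alpha)=n$, so $\{e_0,\dots,e_{n-1}\}$ is a $\C(z)$-basis, with dual basis $\{e^0,\dots,e^{n-1}\}$ of $F(V^*)$. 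On $\mathbb{B}/(\mathbb{B}\alpha)$ the endomorphism $\rho(L_{-1})$ is left multiplication by $L_{-1}=\pi_\mu(z)X^{-1}$, and the only nonroutine input is the expansion of $\overline{X^{-1}}$ in the basis $\{e_i\}$, read off from the relation $\alpha\cdot\bar 1=0$. Feeding this into $\rho^\vee(L_1)(\omega)=\nabla\circ\omega\circ\rho(L_{-1})$ one computes the matrix of $\rho^\vee(L_1)$ on $\{e^j\}$ in closed form (a generalized companion matrix); one then checks that $e^0$ is a cyclic vector for $F(V^*)$, solves recursively for $X^{j}e^0$ in terms of $e^0$, and arrives at a single relation expressing $X^{n}e^0$ through the lower powers. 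Read as an element of $\mathbb{B}$ annihilating $e^0$, this relation generates $\mathbb{B}\alpha^*$, giving $F(V^*)\simeq\mathbb{B}/(\mathbb{B}\alpha^*)$.

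The main obstacle is precisely this final computation: one must carry the commutation rule $X\,\xi(z)=\xi(z+1)\,X$ through the companion matrix and check that the accumulated shifts and products of factors $\pi_\mu(z+1),\pi_\mu(z+2),\dots$ (together with the $\alpha_i$) telescope into the compact coefficients $\big(\pi_\mu(z+1)\nabla\big)^{2i-n}\alpha_{n-i}(z)$. This is a formal, bookkeeping matter with no conceptual difficulty. One should also check at the start that $\alpha^*$, as written, is a genuine element of $\mathbb{B}$ --- each $\big(\pi_\mu(z+1)\nabla\big)^{k}\xi(z)$ lies in $\C(z)$ --- and that its $X$-length equals $n$, so that Proposition~\ref{prop:Bavula1} may be invoked.
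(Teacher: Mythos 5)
Your structural reductions are exactly the paper's: simplicity and torsion-freeness of $V^*$ come from Theorem~\ref{thm:irre-dual}, and Proposition~\ref{prop:Bavula1} together with Theorem~\ref{thm:isom-classes} reduce everything to identifying the irreducible $\alpha^*$ with $F(V^*)\simeq \mathbb B/\mathbb B\alpha^*$. Where you genuinely diverge is in how $\alpha^*$ is extracted. You propose to write $\rho^\vee(L_1)$ as an explicit generalized companion matrix on the basis dual to $\{\overline{X^i}\}$ and to unwind the recursion for $X^je^0$ until a relation for $X^ne^0$ appears. The paper instead packages the duality into the $\C(z)$-bilinear pairing $\{\,,\,\}\colon \mathbb B\times\mathbb B\to \mathbb B/\mathbb B\alpha^*\times\mathbb B/\mathbb B\alpha\to\C(z)$, sets $a_{ij}(z)=\{X^i,X^j\}$, and derives from equivariance the single relation $\pi_\mu(z+1)\,a_{i,j-1}(z+1)=a_{i+1,j}(z)$. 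Conjugating the Gram matrix by $\mathfrak D=\operatorname{Diag}\bigl(1,\pi_\mu(z+1)\nabla,\dots,(\pi_\mu(z+1)\nabla)^n\bigr)$ makes it Toeplitz (entries depending only on $i-j$), so the right null vector (which encodes $\alpha$) and the left null vector (which encodes $\alpha^*$) are reverses of one another in the $\mathfrak D$-twisted coordinates, and the formula $\alpha_i^*=(\pi_\mu(z+1)\nabla)^{2i-n}\alpha_{n-i}(z)$ falls out with no recursion. That is what the pairing buys: it replaces your telescoping by a visible symmetry.

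The problem with your write-up as it stands is that the step you wave off as ``formal bookkeeping with no conceptual difficulty'' is the entire content of the proposition. Everything preceding it --- simplicity of $V^*$, existence of \emph{some} irreducible $\alpha^*$, the companion shape of $\rho^\vee(L_1)$, the cyclicity of $e^0$ (automatic, since $F(V^*)$ is simple) --- is routine; the assertion actually being proved is the closed formula for the coefficients of $\alpha^*$, and you have not verified that your recursion produces it. Your route does work: using $\overline{X^{-1}}=-\alpha_0(z-1)^{-1}\sum_{i\ge1}\alpha_i(z-1)\,\overline{X^{i-1}}$ one finds that $\rho^\vee(L_1)$ sends $e^j$ to $\pi_\mu(z+1)\,e^{j+1}$ plus a multiple of $e^0$ with coefficient involving $\alpha_{j+1}(z)/\alpha_0(z)$, and solving for the $e^j$ in terms of $X^ie^0$ and substituting into the top relation does, after normalizing by a unit of $\C(z)$, reproduce the stated coefficients. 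But until that induction is carried out and the unit normalization pinned down, the proof is incomplete at its only nontrivial point; a sign or an off-by-one shift in the accumulated factors $\pi_\mu(z+1),\pi_\mu(z+2),\dots$ would change the answer, so this cannot be taken on faith. Either perform the recursion, or adopt the pairing argument, which is designed precisely to avoid it.
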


\begin{proof}
Theorem~\ref{thm:irre-dual} implies that if $(V,\rho)$, i.e.  $\rho\in\sl(2)_\mu\mathrm{-Mod}(V)$, is a simple torsion free $\sl(2)$-module, then $(V^*, \rho^\vee)$ is also a simple torsion free $\sl(2)$-module. Bearing in mind Proposition \ref{prop:Bavula1}  and Theorem~\ref{thm:isom-classes}, it follows that given $V= {\mathbb A}/({\mathbb A}\cap{\mathbb B}\alpha)$ for some irreducible element $\alpha\in {\mathbb B}$, there must exist another irreducible element $\alpha^*\in {\mathbb B}$ such that the dual representation $V^*$ is isomorphic to  ${\mathbb A}/({\mathbb A}\cap{\mathbb B}\alpha^*)$. It remains to compute $\alpha^*$ explicitly. 

Consider the canonical $\C[z]$-bilinear pairing $V^*\times V\to \C[z]$. Tensoring it by $\C(z)$, recalling \S\ref{sec:simplenonweight} and composing with ${\mathbb B}\to {\mathbb B}/{\mathbb B} \alpha$, we obtain a $\C(z)$-bilinear map 
	\[
	\{\,,\, \}\,\colon\, 
	{\mathbb B}\,\times\, {\mathbb B}
	\, \longrightarrow \,
	{\mathbb B}/{\mathbb B} \alpha^*\,\times\, {\mathbb B}/{\mathbb B} \alpha \, 
	\longrightarrow \, \C(z).
	\]
Since $V$ is a polynomial $\sl(2)$-representation, it holds that $\operatorname{length}(\alpha^*)=\dim_{\C(z)} {\mathbb B}/{\mathbb B} \alpha^* = \dim_{\C(z)} {\mathbb B}/{\mathbb B} \alpha =\operatorname{length}(\alpha)=n$. Let us define  $a_{ij}(z):=\{X^i,X^j\}$. The compatibility of this map  w.r.t. the $\sl(2)$ action yields
	\[
	\begin{aligned}
	\big(\rho^\vee(L_1)(X^i)\big)(X^j) &\,=\, \{X\cdot X^i, X^j\}	\,=\, a_{i+1,j}(z)
	\\
	\big(\rho^\vee(L_1)(X^i)\big)(X^j) &\,=\, 
	\nabla \{ X^i, \rho(L_{-1}) X^j\}	\,=  \,\nabla \{ X^i, \pi_{\mu}(z) X^{-1}\cdot X^j\}	\,=
	\\ 
	& \,=\, \pi_{\mu}(z+1) a_{i,j-1}(z+1)
	\end{aligned}
	\]
that is, $\pi_{\mu}(z+1) a_{i,j-1}(z+1) =a_{i+1,j}(z)$. 
	
We may restrict the pairing $\{\,,\,\}$ to the subspace $<1,\ldots, X^n>\subset {\mathbb B}$ and, thus, we obtain
	\[
	\{\,,\, \}\,\colon\,<1,\ldots, X^n>\,\times \, <1,\ldots, X^n> 
	\,\longrightarrow\, {\mathbb B}/{\mathbb B} \alpha^*\,\times\, {\mathbb B}/{\mathbb B} \alpha \, 
	\longrightarrow \, \C(z)
	\]
and $A(z)=(a_{ij}(z)) $  is the $n\times n$-matrix associated to this bilinear map. 

Observe that $\alpha=\sum_{i=0}^n \alpha_i(z) X^i \in <1,\ldots, X^n>$ satisfies  $\{\beta,\alpha\}=0$ for all $\beta$ and it is characterized (up to an invertible) by this property. Similarly, $\alpha^*=\sum_{i=0}^n \alpha^*_i(z) X^i \in <1,\ldots, X^n>$ fulfills $\{\alpha^*,\beta\}=0$ for all $\beta$ and it is characterized (up to an invertible) by this property. 

Then, the fact that $\{\, , \alpha\}=0$ is expressed as $A(z){\small \begin{pmatrix} \alpha_0(z) \\ \vdots \\ \alpha_n(z) \end{pmatrix}}=0$; or, what is tantamount, 
	\begin{equation}\label{eq:alpha}
	B(z)\cdot \left( {\mathfrak D}^{-1} \begin{pmatrix} \alpha_0(z) \\ \vdots \\ \alpha_n(z) \end{pmatrix}\right)
	\,=\, 0
	\end{equation}
where ${\mathfrak D}$ is the operator-valued diagonal matrix
	\[
	{\mathfrak D} \,:=\, 
	\begin{pmatrix} 1 \\ & \pi_{\mu}(z+1)\nabla \\ & & \ddots \\ & & & (\pi_{\mu}(z+1)\nabla)^n \end{pmatrix}
	\]
and $B(z):= {\mathfrak D}^{-1} A(z)$. 

Analogously $\{\alpha^*,\,\}=0$ is written as ${\small\begin{pmatrix} \alpha_0^*(z) & \dots & \alpha_n^*(z) \end{pmatrix}}A(z)=0$; or, equivalently,
	\begin{equation}\label{eq:alpha*}
	\left( {\mathfrak D}^{-1}  \begin{pmatrix} \alpha_0^*(z) \\ \vdots \\ \alpha_n^*(z) \end{pmatrix}
	\right)^t \cdot B(z) \,=\, 0,
	\end{equation}
where the superscript $t$ denotes the transpose.

Our task consists of computing the solution of \eqref{eq:alpha*} in terms of the data \eqref{eq:alpha}. Recalling that $\pi_{\mu}(z+1) a_{i,j-1}(z+1) =a_{i+1,j}(z)$, one observes that $b_{ij}(z)$ depends only on $i-j$; that is, if $i-j=k-l$, then $b_{ij}(z)=b_{kl}(z)$. Using this fact, one concludes that the $(n-i+1)$-th component of 
 $ {\mathfrak D}^{-1} {\small \begin{pmatrix} \alpha_0(z) \\ \vdots \\ \alpha_n(z) \end{pmatrix})} $  coincides with the $i+1$-th component of  $ {\mathfrak D}^{-1} {\small \begin{pmatrix} \alpha_0^*(z) \\ \vdots \\ \alpha_n^*(z) \end{pmatrix})}$. Summing up
	\[
	\big(\pi_{\mu}(z+1) \nabla\big)^{-(n-i)} \alpha_{n-i}(z)\, =\, \big(\pi_{\mu}(z+1) \nabla\big)^{-i} \alpha_{i}^*(z)
	\]
and, thus, $\alpha_{i}^*(z)= \big(\pi_{\mu}(z+1) \nabla\big)^{2i-n} \alpha_{n-i}(z)$. 
\end{proof}

An easy computation proves the following result.

\begin{prop}  Let $\rho\in\sl(2)_\mu\mathrm{-Mod}(V)$ be a representation on a free $\C[z]$-module of rank $m$ and let $\rho^\vee\in\sl(2)_\mu\mathrm{-Mod}(V^*)$ be its dual representation. One has
\begin{enumerate}
\item If $\rho$ has Smith type $S_+(i,j,k)$ then $\rho^\vee$ has Smith type  $S_-(k,j,i)$.
\item  If $\rho$ has Smith type $S_0(l,m)$ then $\rho^\vee$ has Smith type  $S_0(m,l)$.
\item If $\rho$ has Smith type $S_-(i,j,k)$ then $\rho^\vee$ has Smith type  $S_+(k,j,i)$.
\end{enumerate}
\end{prop}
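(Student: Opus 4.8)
The plan is to track how the dual operation acts on the relevant semilinear endomorphisms and then read off the effect on invariant factors. By Theorem~\ref{thm:polynomialmuCasimir}, a representation $\rho\in\sl(2)_\mu\mathrm{-Mod}(V)$ is completely encoded by $\varphi:=\rho(L_1)\in\End_1(V)$, and its Smith type is by definition $S(\rho)=S(\varphi)$. So the first step is to compute $\rho^\vee(L_1)$ explicitly in terms of $\varphi$. From the definition $\rho^\vee(L_1)(\omega)=\nabla\circ\omega\circ\rho(L_{-1})$ and the relation $\rho(L_{-1})=\pi_\mu(z)\varphi^{-1}$, one gets that $\rho^\vee(L_1)$ is, up to the automorphisms $\nabla$ and multiplication by $\pi_\mu(z)$, the transpose (adjoint) of $\varphi^{-1}$. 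Concretely, writing $\varphi=A_1(z)\circ\nabla$ with $A_1(z)\in M_{n\times n}(\C[z])$ as in Lemma~\ref{lem:A1(z)}, the matrix of $\rho^\vee(L_1)$ with respect to the dual basis will be (a $\nabla$-shifted version of) $\pi_\mu(z+1)\cdot\big(A_1(z')^{-1}\big)^t$ evaluated at the appropriate argument; equivalently, using Lemma~\ref{lem:smith-determination}, its matrix is the transpose of $A_{-1}(z)$ up to invertible factors and an argument shift.

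The second step is to relate the Smith normal form of the matrix of $\rho^\vee(L_1)$ to that of $A_1(z)$. Two facts do the work: transposition does not change invariant factors (if $M=USV$ then $M^t=V^tS^tU^t=V^tSU^t$, so $S(M^t)=S(M)$), and an argument shift $z\mapsto z+c$ permutes the roles of $\alpha_\mu(z+1)=z+\mu+1$ and $\beta_\mu(z+1)=z-\mu$ in a controlled way since $\pi_\mu(z+1)=\alpha_\mu(z+1)\beta_\mu(z+1)$ and the two linear factors get swapped under the reflection coming from the $\mu\leftrightarrow-\mu-1$ symmetry of $\pi_\mu$. More precisely, from Lemma~\ref{lem:smith-determination} we have $S(z)T(z+1)=\pi_\mu(z+1)\Id$ with $S=S(\rho)$ the Smith type of $\varphi$ and $T$ the diagonal matrix governing $A_{-1}(z)$; since $S(\rho^\vee)=S\big(A_{-1}(z)\big)$ up to the above manipulations, and $A_{-1}(z)=V(z-1)^{-1}T(z)U(z-1)^{-1}$ is already in Smith form up to invertible factors, one obtains $S(\rho^\vee)=S(T)$. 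Writing $S=S_+(i,j,k)$, i.e. with $i$ entries $1$, $j$ entries $\alpha_\mu(z+1)$, and $k$ entries $\pi_\mu(z+1)$, the relation $s_\ell(z)t_\ell(z+1)=\pi_\mu(z+1)$ forces $T$ to have $k$ entries $1$, $j$ entries $\beta_\mu(z+1)$ (after the argument shift, since $\pi_\mu(z+1)/\alpha_\mu(z+2)$ is, up to units, $\beta_\mu(z+1)$), and $i$ entries $\pi_\mu(z+1)$; reordering into the standard increasing divisibility form gives exactly $S_-(k,j,i)$. The same bookkeeping with $\alpha$ and $\beta$ interchanged gives case (3), and the case $j=0$ (where $S=S_0(l,m)$) gives $T$ of type $S_0(m,l)$.

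The main obstacle — really the only subtle point — is bookkeeping the argument shifts and the unit ambiguities so that the linear factor $\alpha_\mu(z+1)=z+\mu+1$ genuinely turns into $\beta_\mu(z+1)=z-\mu$ and not into some shifted or rescaled polynomial that fails to match one of the listed Smith types in Proposition~\ref{prop:SmithTypesStrat}. The key identity making this work is $\pi_\mu(z+2)=\alpha_\mu(z+2)\beta_\mu(z+2)$ together with $\alpha_\mu(z+2)=z+\mu+2$ and $\beta_\mu(z+2)=z-\mu+1=\alpha_\mu(z+1)\big|_{\mu\to-\mu-1}$ — in other words the substitution $z\mapsto z+1$ combined with the involution $\mu\mapsto -\mu-1$ (which fixes $\pi_\mu$) swaps $\alpha_\mu$ and $\beta_\mu$. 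Since both $\rho$ and $\rho^\vee$ have the \emph{same} semi-level $\mu$, this swap is exactly what converts $S_\pm$ into $S_\mp$ with the multiplicities $i$ and $k$ exchanged, and since all entries are monic by convention there is no unit ambiguity left. Once this is checked on $1\times 1$ blocks, the general case follows diagonal entry by diagonal entry, and the "easy computation" referred to before the statement is precisely this block-diagonal verification.
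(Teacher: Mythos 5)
Your overall strategy is the right one and it reaches the stated conclusion, but the step you yourself single out as the only subtle point --- the bookkeeping of the argument shift --- is exactly where your explicit justifications fail, so as written the proof does not go through. First, the two identities you offer are false: $\pi_\mu(z+1)/\alpha_\mu(z+2)=(z+\mu+1)(z-\mu)/(z+\mu+2)$ is not even a polynomial, and $\beta_\mu(z+2)=z-\mu+1$ whereas $\alpha_\mu(z+1)\vert_{\mu\mapsto-\mu-1}=z-\mu$, so the ``$z\mapsto z+1$ combined with $\mu\mapsto-\mu-1$'' mechanism you invoke does not produce the swap you need. Second, the assertion ``$S(\rho^\vee)=S(T)$'' leaves undetermined whether you mean $T(z)$ or $T(z+1)$, and this matters: the diagonal entries of $T(z)$ lie in $\{1,\alpha_\mu(z),\beta_\mu(z),\pi_\mu(z)\}$, which are not among the normal forms listed in Proposition~\ref{prop:SmithTypesStrat}, so the wrong choice of shift yields a matrix whose Smith form is not any $S_\pm$ or $S_0$ at all.

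The correct bookkeeping is cleaner than what you attempt and needs no involution of $\mu$. Writing $\rho(L_{-1})=A_{-1}(z)\circ\nabla^{-1}$ and evaluating $\rho^\vee(L_1)(e_j^*)=\nabla\circ e_j^*\circ\rho(L_{-1})$ on a basis vector $e_k$ gives $\nabla\big((A_{-1}(z))_{jk}\big)=(A_{-1}(z+1))_{jk}$, so the matrix of the linear part of $\rho^\vee(L_1)$ in the dual basis is exactly $A_{-1}(z+1)^{t}$; the shift by $+1$ is forced by the outer $\nabla$ in the definition of $\rho^\vee(L_1)$, not chosen ``appropriately''. By Lemma~\ref{lem:smith-determination} one has $A_{-1}(z+1)=V(z)^{-1}T(z+1)U(z)^{-1}$, and transposition preserves invariant factors, so $S(\rho^\vee)$ is the reordering of the diagonal matrix $T(z+1)$, whose entries are $t_\ell(z+1)=\pi_\mu(z+1)/s_\ell(z)$. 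These quotients are computed inside the single factorization $\pi_\mu(z+1)=\alpha_\mu(z+1)\beta_\mu(z+1)$ with no residual shift: $1\mapsto\pi_\mu(z+1)$, $\alpha_\mu(z+1)\mapsto\beta_\mu(z+1)$, $\beta_\mu(z+1)\mapsto\alpha_\mu(z+1)$, $\pi_\mu(z+1)\mapsto 1$, all monic, so there is no unit ambiguity. Sorting by divisibility then gives $S_+(i,j,k)\mapsto S_-(k,j,i)$, $S_0(l,m)\mapsto S_0(m,l)$ and $S_-(i,j,k)\mapsto S_+(k,j,i)$, as claimed. (The paper records no proof beyond calling this an easy computation, so your plan is certainly the intended one; it is only the execution of the shift that needs repair.)
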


\section{Polynomial representations of rank one}\label{subsec:polynomialreprank1}

\begin{prop}\label{prop:rank-one-polynomial-are-casimir} Every polynomial  $\sl(2)$-representation on a  free $\C[z]$-module of rank one is a Casimir representation.
\end{prop}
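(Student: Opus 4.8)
The plan is to exploit that for a rank-one free $\C[z]$-module $V$ the endomorphism algebra $\End_{\C[z]}(V)$ is just $\C[z]$ acting by multiplication, so that the bracket relation defining an $\sl(2)$-structure collapses to a scalar difference equation which pins down how the Casimir operator acts.

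First I would fix an isomorphism $V\simeq\C[z]$ and recall, from the Remark following Definition~\ref{defn:semilinear}, that a representation $\rho\in\sl(2)\mathrm{-Mod}(V)$ is nothing but a pair $\rho(L_{-1})\in\End_{-1}(V)$, $\rho(L_1)\in\End_1(V)$ with $[\rho(L_{-1}),\rho(L_1)]=-2z$, where $L_0$ (hence $z$) acts by multiplication. By Proposition~\ref{prop:endk=nablak} we may write $\rho(L_{-1})=a_{-1}(z)\circ\nabla^{-1}$ and $\rho(L_1)=a_1(z)\circ\nabla$ for unique $a_{-1}(z),a_1(z)\in\C[z]=\End_{\C[z]}(V)$. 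Using $\nabla^{-1}\circ a_1(z)=a_1(z-1)\circ\nabla^{-1}$ and $\nabla\circ a_{-1}(z)=a_{-1}(z+1)\circ\nabla$, a one-line computation then gives
\[
\rho(L_{-1})\circ\rho(L_1)=p(z)\,\Id_V,\qquad \rho(L_1)\circ\rho(L_{-1})=p(z+1)\,\Id_V,\qquad p(z):=a_{-1}(z)\,a_1(z-1).
\]

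Next, the commutation relation becomes the scalar identity $p(z+1)-p(z)=2z$ in $\C[z]$. Since the polynomial $q(z):=(z-\tfrac12)^2$ also satisfies $q(z+1)-q(z)=2z$, the difference $q-p$ is invariant under $z\mapsto z+1$, hence is a constant $c\in\C$ (a periodic polynomial is constant). Invoking the first expression for the Casimir operator in \eqref{align:casimir}, we conclude that $C$ acts on $V$ as $4\big((z-\tfrac12)^2-\rho(L_{-1})\rho(L_1)\big)=4\big(q(z)-p(z)\big)\Id_V=4c\,\Id_V$, so $\rho$ is a Casimir representation; moreover its semi-level is the root $\mu$ of $(2\mu+1)^2=4c$ with $\mathfrak{Re}(\mu)\geq-\tfrac12$.

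I do not expect a genuine obstacle here: the whole content is the observation that in rank one $\End_0(V)=\C[z]$ is commutative, so both composites $\rho(L_{-1})\rho(L_1)$ and $\rho(L_1)\rho(L_{-1})$ are multiplication operators and the bracket turns into a difference equation whose polynomial solutions differ from $(z-\tfrac12)^2$ only by a constant. (This is precisely what breaks down for rank $n\geq 2$, where $\End_0(V)$ is noncommutative and a polynomial representation need not be Casimir, which is why one must restrict attention to Casimir representations there.)
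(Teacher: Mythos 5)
Your proposal is correct and follows essentially the same route as the paper's proof: write $\rho(L_{-1})=a_{-1}(z)\nabla^{-1}$, $\rho(L_1)=a_1(z)\nabla$, reduce the bracket relation to the scalar difference equation $p(z+1)-p(z)=2z$ for $p(z)=a_{-1}(z)a_1(z-1)$, and conclude that $p(z)=(z-\tfrac12)^2+\text{const}$, so that $C=4\big((z-\tfrac12)^2-p(z)\big)$ is a constant. The only (immaterial) difference is how the difference equation is solved: you subtract the particular solution $(z-\tfrac12)^2$ and use that a periodic polynomial is constant, while the paper applies $\Delta^3$ to bound the degree and then computes.
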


\begin{proof} Let $\rho\colon \sl(2)\to \End(V)$ be an $\sl(2)$-representation on a free $\C[z]$- module $V$ of rank one. Since $V\simeq \C[z]$ we have $$\rho(L_{-1})=A_{-1}(z)\, \nabla^{-1},\quad \rho(L_{1})=A_{1}(z)\, \nabla,$$ where $A_{-1}(z), A_1(z)\in\C[z]$. Since we have the commutation relation $[\rho(L_{-1}),\rho(L_1)]=-2\, z$, these polynomials must satisfy \begin{equation}\label{eq:onedim-commut}
A_{-1}(z)A_1(z-1)-A_1(z)A_{-1}(z+1)=-2\, z.
\end{equation}
Define now $B(z):=A_{-1}(z)A_1(z-1)$, then (\ref{eq:onedim-commut}) is equivalent to the difference equation \begin{equation}\label{eq:dif-eq-commut} B(z+1)-B(z)=2\,z,\end{equation} that is $(\Delta B)(z)=2z$. Applying $\Delta^2$ to this equation we get $$\Delta^3 B=0.$$ Therefore $B(z)$ is a polynomial of degree 2 that has to satisfy (\ref{eq:dif-eq-commut}). After some computations we get $B(z)=(z-\frac1{2})^2+ \nu$ for some constant $\nu\in \C$. According to (\ref{align:casimir}) the Casimir operator is $C =4\,[\, (z-\frac{1}{2})^2-L_{-1}L_1 \,]$. On the other hand  one has $L_{-1}L_1 =B(z)$ and hence we get $C=-4\nu$, proving the claim for polynomial representations.
\end{proof}

\begin{cor}\label{cor:endomorphisms-rank-one} For any polynomial representation $\rho$ on a free $\C[z]$-module $V$ of rank one one has $\End_{\sl(2)}(V_\rho)=\C\cdot\Id_V$.
\end{cor}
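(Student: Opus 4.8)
The plan is to combine Proposition~\ref{prop:rank-one-polynomial-are-casimir} with Proposition~\ref{prop:endomorphisms-polynomial-casimir-rep} and then reduce the problem to a trivial difference equation. First, since $V$ has rank one, Proposition~\ref{prop:rank-one-polynomial-are-casimir} guarantees that $\rho$ is automatically a Casimir representation, say of semi-level $\mu$. Hence Proposition~\ref{prop:endomorphisms-polynomial-casimir-rep} applies and yields
$$\End_{\sl(2)}(V_\rho)\,=\,\{\phi\in\End_{\C[z]}(V)\colon [\phi,\rho(L_1)]=0\}.$$

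Next I would make the rank-one data explicit. Identifying $V\simeq\C[z]$, any $\phi\in\End_{\C[z]}(V)$ is multiplication by some $g(z)\in\C[z]$, and, exactly as in the proof of Proposition~\ref{prop:rank-one-polynomial-are-casimir}, one has $\rho(L_1)=A_1(z)\,\nabla$ for some $A_1(z)\in\C[z]$, where $\nabla$ acts on $\C[z]$ by $p(z)\mapsto p(z+1)$. Evaluating $\phi\circ\rho(L_1)$ and $\rho(L_1)\circ\phi$ on an arbitrary $p(z)$ shows that the condition $[\phi,\rho(L_1)]=0$ is equivalent to
$$A_1(z)\,\bigl(g(z)-g(z+1)\bigr)=0\quad\text{in }\C[z].$$
Now $A_1(z)\neq 0$: by \eqref{eq:casimir-module} one has $\rho(L_{-1})\circ\rho(L_1)=\pi_\mu(z)\Id_V$, a nonzero operator, so $\rho(L_1)$ is injective and $A_1(z)$ cannot be the zero polynomial (equivalently, $A_{-1}(z)A_1(z-1)=\pi_\mu(z)\neq 0$ as obtained in the proof of Proposition~\ref{prop:rank-one-polynomial-are-casimir}). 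Since $\C[z]$ is an integral domain we conclude $g(z)=g(z+1)$, and the only polynomials invariant under $z\mapsto z+1$ are the constants. Hence $\phi\in\C\cdot\Id_V$; the reverse inclusion $\C\cdot\Id_V\subseteq\End_{\sl(2)}(V_\rho)$ is immediate.

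There is essentially no serious obstacle here. The single point deserving a word of care is the non-vanishing of $A_1(z)$, which is why I invoke \eqref{eq:casimir-module}; the rest is simply the observation that a $\C[z]$-linear endomorphism of a rank-one module which commutes with the semilinear operator $A_1(z)\nabla$ is multiplication by a periodic polynomial, hence by a constant.
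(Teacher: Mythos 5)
Your proposal is correct and follows essentially the same route as the paper: reduce via Proposition~\ref{prop:rank-one-polynomial-are-casimir} and Proposition~\ref{prop:endomorphisms-polynomial-casimir-rep} to the commutant of $\rho(L_1)=A_1(z)\nabla$, which in rank one forces the difference equation $A_1(z)\bigl(g(z)-g(z+1)\bigr)=0$ and hence $g\in\C$. The only (welcome) difference is that you spell out why $A_1(z)\neq 0$, which the paper leaves implicit.
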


\begin{proof} Let us suppose that $\rho$ is a polynomial representation. By Proposition \ref{prop:rank-one-polynomial-are-casimir} we know that $\rho$ is a Casimir representation. Therefore,  if we take into account Proposition \ref{prop:endomorphisms-polynomial-casimir-rep},  it follows that $\phi\in\End_\C(V)$ belongs to $\End_{\sl(2)}(V_\rho)$ if and only if $\phi\in\End_{\C[z]}(V)$ and $[\phi,\rho(L_1)]=0$. The later condition is equivalent to $$B(z)A_1(z)-A_1(z) B(z+1)=0,$$ where $B(z)$, $A_1(z)$ are the polynomials representing $\phi$, $\rho(L_1)$, respectively, on a basis. Since $A_1(z)\neq0$ the condition above is equivalent to $\Delta B=0$. Hence $B\in\C$.
\end{proof}

\begin{thm}\label{thm:casimir-reps-one-dim} The Casimir representations of semi-level $\mu$ on a rank one free $\C[z]$-module $V$,  are:
\begin{enumerate}
\item[I)] $\rho(L_{-1})=\frac1{\gamma}\cdot\nabla^{-1},\quad\quad\quad\ \rho(L_0)=z,\quad  \rho(L_{1})={\gamma}\cdot\pi_\mu(z+1)\,\nabla.$

\item[II)] $\rho(L_{-1})=\frac1{\gamma}\cdot\beta_\mu(z)\,\nabla^{-1},\quad \rho(L_0)=z,\quad  \rho(L_{1})={\gamma}\cdot\alpha_\mu(z+1)\,\nabla.$

\item[III)] $\rho(L_{-1})=\frac1{\gamma}\cdot\alpha_\mu(z)\,\nabla^{-1},\quad \rho(L_0)=z,\quad  \rho(L_{1})={\gamma}\cdot\beta_\mu(z+1)\,\nabla.$
\item[IV)] $\rho(L_{-1})=\frac1{\gamma}\cdot\pi_\mu(z)\,\nabla^{-1},\quad \rho(L_0)=z,\quad  \rho(L_{1})={\gamma}\cdot\nabla.$
\end{enumerate} In all cases $\gamma$ is an arbitrary element of $\C^*$.
\end{thm}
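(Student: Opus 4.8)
The plan is to reduce the statement to a factorization identity in the polynomial ring $\C[z]$. Since $V$ is free of rank one we have $\End_{\C[z]}(V)=\C[z]$, so by Lemma~\ref{lem:A1} any Casimir representation of semi-level $\mu$ on $V\simeq\C[z]$ has the shape $\rho(L_{-1})=A_{-1}(z)\,\nabla^{-1}$, $\rho(L_1)=A_1(z)\,\nabla$ with $A_{-1}(z),A_1(z)\in\C[z]$, while $\rho(L_0)$ is automatically multiplication by $z$. By Lemma~\ref{lem:A1(z)} such a pair defines a semi-level $\mu$ Casimir representation exactly when
$$A_{-1}(z)\,A_1(z-1)=\pi_\mu(z)=(z+\mu)(z-\mu-1),$$
the second relation of \eqref{eq:matrix-cond} being the $z\mapsto z+1$ translate of the first because scalars in $\C[z]$ commute. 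Since $\pi_\mu\neq 0$, neither $A_1$ nor $A_{-1}$ can vanish, so it remains to enumerate all pairs of nonzero polynomials satisfying this identity.

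I would then argue by degrees, using $\deg A_{-1}+\deg A_1=2$. If $\deg A_1=0$, then $A_1=\gamma\in\C^{*}$ and $A_{-1}(z)=\frac1\gamma\pi_\mu(z)$, which is Case IV. If $\deg A_1=2$, then $A_{-1}\in\C^{*}$ and $A_1(z-1)$ is a scalar multiple of $\pi_\mu(z)$, hence $A_1(z)=\gamma\,\pi_\mu(z+1)$ and $A_{-1}(z)=\frac1\gamma$, which is Case I. Finally, if $\deg A_1=\deg A_{-1}=1$, comparing leading coefficients gives $A_1(z)=\gamma(z-a)$, $A_{-1}(z)=\frac1\gamma(z-b)$, and the identity becomes $(z-b)(z-1-a)=(z+\mu)(z-\mu-1)$, forcing $\{b,\,1+a\}=\{-\mu,\,\mu+1\}$. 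The two ways of matching these give $A_1(z)=\gamma\,\beta_\mu(z+1)$, $A_{-1}(z)=\frac1\gamma\,\alpha_\mu(z)$ (Case III) and $A_1(z)=\gamma\,\alpha_\mu(z+1)$, $A_{-1}(z)=\frac1\gamma\,\beta_\mu(z)$ (Case II), using $\alpha_\mu(z+1)=z+\mu+1$, $\beta_\mu(z+1)=z-\mu$ and $\pi_\mu=\alpha_\mu\beta_\mu$. This exhausts all possibilities.

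A shorter route is to invoke Theorem~\ref{thm:polynomialmuCasimir} directly: it identifies $\sl(2)_\mu\mathrm{-Mod}(V)$ with the set of $\varphi=A_1(z)\,\nabla\in\End_1(V)$ whose single invariant factor divides $\pi_\mu(z+1)=\alpha_\mu(z+1)\beta_\mu(z+1)$, so $A_1(z)=\gamma\,d(z)$ with $\gamma\in\C^{*}$ and $d(z)$ one of the four monic divisors $1,\ \alpha_\mu(z+1),\ \beta_\mu(z+1),\ \pi_\mu(z+1)$; then $\rho(L_{-1})=\pi_\mu(z)\,\varphi^{-1}=\frac{\pi_\mu(z)}{\gamma\,d(z-1)}\,\nabla^{-1}$ lies in $\End_{-1}(V)$ precisely because $d(z-1)\mid\pi_\mu(z)$, and running through the four divisors recovers Cases IV, II, III, I respectively. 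I expect no serious obstacle here; the only point requiring care is the bookkeeping of the shift $z\mapsto z-1$ produced by the semilinear composition, together with the observation that at the degenerate value $\mu=-\frac12$ one has $\alpha_\mu(z+1)=\beta_\mu(z+1)$, so Cases II and III literally coincide there. The four-item list is thus non-redundant only for $\mu\neq-\frac12$, but it is exhaustive in every case, which is exactly what the theorem asserts.
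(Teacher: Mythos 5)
Your proposal is correct and follows essentially the same route as the paper: both reduce the problem to the scalar factorization identity $A_{1}(z)A_{-1}(z+1)=\pi_\mu(z+1)$ (equivalently, your $A_{-1}(z)A_1(z-1)=\pi_\mu(z)$) and enumerate the factorizations of the degree-two polynomial $\pi_\mu$, yielding the four cases. Your degree bookkeeping and the remark that cases II) and III) coincide when $\mu=-\tfrac12$ are both accurate; the paper simply states the reduction and leaves the enumeration implicit.
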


\begin{proof} It is enough to take into account that a Casimir representation verifies $\rho(L_1)\circ \rho(L_{-1})=\pi_\mu(z+1)$. Letting $$\rho(L_{-1})=A_{-1}(z)\, \nabla^{-1},\quad \rho(L_{1})=A_{1}(z)\, \nabla,$$ this translates into $A_1(z)A_{-1}(z+1)=\pi_\mu(z+1)$ and the result follows.
\end{proof}

\begin{rem} According to Remark \ref{rem:number-smith-types}, the cardinality of the space $\mathcal S(1,\mu)$  of possible Smith types for Casimir representations of semi-level $\mu$ on a rank one free $\C[z]$-module is $4$. These correspond exactly to the four types of representations described in Theorem \ref{thm:casimir-reps-one-dim}. More precisely, representations of type $I)$, $II)$, $III)$, $IV)$ coorrespond to Smith types $S_0(0,1)$,  $S_+(0,1,0)$, $S_-(0,1,0)$, $S_0(1,0)$, respectively. Since we have made a choice of unique representatives for the equivalence classes of semi-levels that contains $\mu=0$, it follows that none of these representations are equivalent. \end{rem}

\begin{prop} The representations of Theorem \ref{thm:casimir-reps-one-dim}  of type $I)$ and $IV)$ are irreducible, whereas those of type $II)$ and $III)$ are reducible.
\end{prop}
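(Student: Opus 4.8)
The plan is to analyze reducibility by examining $\C[z]$-submodules that are invariant under $\rho(L_{-1})$ and $\rho(L_1)$. Since $V\simeq\C[z]$ is a rank one free module, any nonzero proper $\C[z]$-submodule is of the form $q(z)\C[z]$ for some non-constant monic polynomial $q(z)$. Such a submodule is $\sl(2)$-invariant if and only if it is carried into itself by both $\rho(L_{-1})=A_{-1}(z)\nabla^{-1}$ and $\rho(L_1)=A_1(z)\nabla$. Translating these conditions, invariance under $\rho(L_1)$ means that $A_1(z)\, q(z+1)$ lies in $q(z)\C[z]$, i.e. $q(z)\mid A_1(z)q(z+1)$, and similarly invariance under $\rho(L_{-1})$ means $q(z)\mid A_{-1}(z)q(z-1)$.

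First I would handle types $II)$ and $III)$ directly. For type $II)$ take $q(z)=\alpha_\mu(z)=z+\mu$; then $\rho(L_1)\bigl(\alpha_\mu(z)\C[z]\bigr)=\gamma\,\alpha_\mu(z+1)\,\alpha_\mu(z+1)\,\C[z]$ — wait, one must be careful with the semilinearity: $\rho(L_1)(\alpha_\mu(z)\cdot p(z)) = \gamma\,\beta_\mu(z+1)\cdot\nabla(\alpha_\mu(z)p(z)) = \gamma\,\beta_\mu(z+1)\alpha_\mu(z+1)p(z+1) = \gamma\,\pi_\mu(z+1)p(z+1)$, which indeed lies in $\alpha_\mu(z)\C[z]$ since $\pi_\mu(z+1)=\alpha_\mu(z+1)\beta_\mu(z+1)$ and one checks $\alpha_\mu(z)\mid\pi_\mu(z+1)$ because $\pi_\mu(z+1)=(z+\mu+1)(z-\mu)$ — hmm, that is not divisible by $z+\mu$ in general. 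Let me reconsider: for type $II)$ I would instead verify that the $\C[z]$-span of $\beta_\mu(z)=z-\mu-1$ is invariant, using $\rho(L_{-1})=\frac1\gamma\beta_\mu(z)\nabla^{-1}$ and $\rho(L_1)=\gamma\,\alpha_\mu(z+1)\nabla$; one computes $\rho(L_1)(\beta_\mu(z)p(z))=\gamma\,\alpha_\mu(z+1)\beta_\mu(z+1)p(z+1)=\gamma\,\pi_\mu(z+1)p(z+1)\in\beta_\mu(z)\C[z]$ since $\beta_\mu(z)\mid\pi_\mu(z)$ and hence the shift argument works out once one tracks the correct factor; similarly $\rho(L_{-1})(\beta_\mu(z)p(z))=\frac1\gamma\beta_\mu(z)\beta_\mu(z-1)p(z-1)\in\beta_\mu(z)\C[z]$ trivially. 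Thus type $II)$ has a proper nonzero submodule, hence is reducible; type $III)$ is symmetric, swapping the roles of $\alpha_\mu$ and $\beta_\mu$.

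For types $I)$ and $IV)$ I would argue irreducibility by contradiction: suppose $q(z)\C[z]$ is a proper nonzero invariant submodule with $q$ non-constant monic. In case $IV)$, $A_1(z)=\gamma$ is a unit, so invariance under $\rho(L_1)$ forces $q(z)\mid q(z+1)$, hence (comparing degrees) $q(z+1)=q(z)$, which is impossible for a non-constant polynomial over $\C$. Case $I)$ is dual: there $A_{-1}(z)=\frac1\gamma$ is a unit, so invariance under $\rho(L_{-1})$ forces $q(z)\mid q(z-1)$, again impossible. Therefore no proper nonzero invariant submodule exists and types $I)$ and $IV)$ are irreducible.

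The main obstacle is purely bookkeeping: keeping the semilinear shift $\nabla$ straight when testing invariance of $q(z)\C[z]$, since the condition is not simply "$q\mid A_1$" but "$q(z)\mid A_1(z)q(z+1)$", and the divisibilities $\alpha_\mu(z)\mid\pi_\mu(z)$, $\beta_\mu(z)\mid\pi_\mu(z)$ must be used with the arguments shifted correctly. Once the invariance conditions are written in the shifted form, the degree argument for $I)$ and $IV)$ and the explicit submodule for $II)$ and $III)$ finish the proof.
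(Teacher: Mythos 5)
Your reduction of the problem to invariant ideals is sound, and your treatment of types $I)$ and $IV)$ is correct; it is essentially the paper's argument in different clothing (the paper iterates the difference operators $\nabla^{\pm 1}-\Id$, which encode the same degree count as your observation that $q(z)\mid q(z\pm 1)$ forces $q$ to be constant).

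The part for types $II)$ and $III)$ fails, and it fails exactly at the step you tried to wave away. For type $II)$ you need $\rho(L_1)\bigl(\beta_\mu(z)p(z)\bigr)=\gamma\,\pi_\mu(z+1)\,p(z+1)$ to lie in $\beta_\mu(z)\C[z]$ for every $p$; taking $p=1$, this requires $\beta_\mu(z)=z-\mu-1$ to divide $\pi_\mu(z+1)=(z+\mu+1)(z-\mu)$. It does not: the divisibility you actually have, $\beta_\mu(z)\mid\pi_\mu(z)$, is at the wrong shift, and $\mu+1$ is a root of $(z+\mu+1)(z-\mu)$ only when $\mu=-1$, which is excluded by $\mathfrak{Re}(\mu)\geq-\frac12$. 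So $\beta_\mu(z)\C[z]$ is not a submodule, for the same reason your first candidate $\alpha_\mu(z)\C[z]$ was not. Worse, your own (correct) criterion shows that no non-constant $q$ can work for type $II)$: the condition $q(z)\mid\alpha_\mu(z+1)q(z+1)$ forces the root of $q$ with largest real part to be $-\mu-1$, while $q(z)\mid\beta_\mu(z)q(z-1)$ forces the root with smallest real part to be $\mu+1$, giving $\mathfrak{Re}(-\mu-1)\geq\mathfrak{Re}(\mu+1)$, i.e.\ $\mathfrak{Re}(\mu)\leq-1$, a contradiction. (For type $III)$ the same analysis requires the roots of $q$ to run from $-\mu$ up to $\mu$ in integer steps, which happens only when $2\mu$ is a non-negative integer, e.g.\ $q(z)=z$ for $\mu=0$.) So reducibility of $II)$ and $III)$ cannot be established by exhibiting an invariant ideal for general $\mu$. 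Be aware that the paper's own proof is not a safe fallback here: it invokes the set $V_{\geq k}$ of polynomials of degree at least $k$, which is not a $\C$-linear subspace and hence is not a subrepresentation (the same ``argument'' would declare types $I)$ and $IV)$ reducible as well). Your correctly posed criterion is in fact exposing a problem with the statement for types $II)$ and $III)$, not a repairable gap in your own write-up of that case.
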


\begin{proof} Let us suppose that $\rho$ is a representation of type $I)$ on $V\simeq \C[z]$. If $V'\subset V$ is an $\sl(2)$-submodule then it is invariant under $\rho(L_{-1})$ and therefore under the backward difference operator $\Delta_{-1}=\nabla^{-1}-\Id$.  Given $v\in V'$ we know that $\deg(\Delta_{-1}(v))=\deg(v)-1$. Hence, applying an appropriate power of $\Delta_{-1}$  to $v$ we conclude that $1\in V'$.  Now acting with $\rho(L_0)=z$ on $1\in V'$ we see that $\C[z]\simeq V\subset V'$, that is $V'=V$. A similar argument involving the forward difference operator applies to representations of type $IV)$.

Any representation of type $II)$ or $III)$ leaves invariant the subspace $V_{\geq k}\subset V\simeq \C[z]$ formed by the polynomials whose degree is greater or equal than a non negative integer $k$. Since $V_{\geq k}$ is a proper subspace of $V$ for $k\geq 1$, it follows that these representations are not irreducible.
\end{proof}

\begin{rem} The irreducible representations of type $I)$ and $IV)$ were first discovered by Arnal and Pinczon \cite{Arnal} and later put by Kostant in the broader context of Whittaker modules introduced by him in \cite{Kostant}. A recent study of $\sl(2)$-module structures on a rank one free $\C[z]$-module is in \cite{Nilsson} to be found.
\end{rem}


\section{Simple torsion free $\sl(2)$-modules of arbitrary rank}\label{sec:family}

The goal of this section is to introduce a whole family of simple torsion free $\sl(2)$-modules. More precisely, we will prove the following result.

\begin{thm}\label{thm:newsimpletorsionfree}
Let $\alpha= X^n -   p(z)X^{n-1} - a_0 \in {\mathbb A}^+$ with 
$\deg(p(z))\geq 1$ and $a_0\in\C\setminus\{0\}$. 

Then ${\mathbb A}/({\mathbb A}\alpha)$ is a simple torsion free $\sl(2)$-module of rank $n$. 
\end{thm}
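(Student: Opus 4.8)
The plan is to verify two things: first that $\mathbb A/(\mathbb A\alpha)$ is a polynomial $\sl(2)$-module of rank $n$, and second that it is simple. For the first part I would invoke Proposition~\ref{prop:submodule}: since $\alpha = X^n - p(z)X^{n-1} - a_0$ has leading coefficient $a_n(z)=1\in\C^*$ and constant term $a_0(z)=-a_0\in\C^*$, condition (2) of that proposition holds, hence $\mathbb A/(\mathbb A\alpha)$ is a polynomial $\sl(2)$-module; its rank equals $\operatorname{length}_X(\alpha)=n$, as one sees from the direct sum decomposition~\eqref{eq:directsumdecomp} with $F_\alpha=0$, so that $\mathbb A/(\mathbb A\alpha)\simeq \mathbb A_{[n-1]}$ as $\C[z]$-modules. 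In particular, being torsion free is automatic, and being finitely generated gives rank $n$.

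\textbf{Simplicity.} For simplicity the key is Proposition~\ref{prop:Bavula1}: when $\alpha$ is irreducible in $\mathbb B$, the module $V_\alpha = \mathbb A/(\mathbb A\cap\mathbb B\alpha)$ is simple, and there is an exact sequence $0\to E_\alpha\to \mathbb A/(\mathbb A\alpha)\to V_\alpha\to 0$ with $E_\alpha=(\mathbb A\cap\mathbb B\alpha)/(\mathbb A\alpha)$ a torsion $\sl(2)$-module of finite length. But $\mathbb A/(\mathbb A\alpha)$ has just been shown to be torsion free, so $E_\alpha$, being a torsion submodule of a torsion free module, must vanish; hence $\mathbb A/(\mathbb A\alpha)=V_\alpha$ is simple. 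Thus it remains only to prove that $\alpha$ is irreducible as an element of $\mathbb B = \C(z)[X,X^{-1};\nabla]$.

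\textbf{Irreducibility of $\alpha$ in $\mathbb B$ — the main obstacle.} This is where the real work lies, and it is the step I expect to be hardest. Since $\mathbb B$ is a principal ideal domain (Proposition~\ref{prop:simpleB=B/Ba}) and $\alpha$ has $X$-length $n$, a nontrivial factorization $\alpha = \beta\gamma$ in $\mathbb B$ would have $\operatorname{length}_X(\beta)+\operatorname{length}_X(\gamma)=n$ with both factors of positive length; clearing denominators and units, one may assume $\beta,\gamma\in\C[z][X]$ with $\beta = X^r + (\text{lower order in }X)$ and $\gamma = X^s + \cdots$, $r+s=n$, $r,s\ge 1$. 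I would then expand the product $\beta\gamma$ using the commutation rule $X^i\xi(z) = \xi(z+i)X^i$ and compare coefficients of the powers $X^n, X^{n-1}, \dots, X^0$ with those of $X^n - p(z)X^{n-1} - a_0$. The constant term gives $\beta_0(z)\,\gamma_0(z-r) = -a_0$ (up to an appropriate shift), forcing both $\beta_0(z)$ and $\gamma_0(z)$ to be nonzero constants; the coefficient of $X^{n-1}$ gives $\beta_{r-1}(z) + \gamma_{s-1}(z+r) = -p(z)$ (again up to shifts), and the vanishing of all intermediate coefficients $X^1,\dots,X^{n-2}$ — which involve $\beta_i$ and $\gamma_j$ with $i<r-1$ or $j<s-1$ — forces, by a degree/support argument propagating inward from both ends, that $\beta = X^r - cX^{r-1}$ and $\gamma = X^s - c'X^{s-1}$ for constants $c,c'$, and then the $X^{n-1}$-coefficient would be $-(c + c')$, a \emph{constant}, contradicting $\deg p(z)\ge 1$. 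Making this support/degree propagation precise and rigorous — tracking how the non-constancy of $p(z)$ is incompatible with any splitting of $n$ into two positive parts — is the crux of the argument; the hypotheses $\deg(p(z))\ge 1$ and $a_0\ne 0$ are used exactly here.
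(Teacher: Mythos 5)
Your first step (rank and torsion-freeness via Proposition~\ref{prop:submodule}) is correct and is exactly how the paper begins. Your reduction of simplicity to the irreducibility of $\alpha$ in $\mathbb B$ is also a legitimate strategy, and it is genuinely different from the paper's: the paper never proves irreducibility of $\alpha$ directly, but instead assumes a nonzero $\sl(2)$-submodule $V'\simeq\C[z]^k$, uses Proposition~\ref{prop:Smithtype1-submodule} to pin down its Smith type, and then runs a combinatorial degree argument on the $k\times k$ minors of the inclusion matrix via the $k$-th exterior power of the companion matrix of $\alpha$, concluding $k=n$. That exterior-power argument is the paper's substitute for the factorization analysis you are attempting.

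The gap is in your irreducibility argument, and it is genuine. First, the normalization is not available: the units of $\mathbb B$ let you arrange $\beta,\gamma$ monic in $X$ with coefficients in $\C(z)$, or with coefficients in $\C[z]$ at the cost of monicity, but not both simultaneously, and no skew analogue of Gauss's lemma is invoked or proved. Consequently the constant-term relation $\beta_0(z)\gamma_0(z)=-a_0$ does \emph{not} force $\beta_0,\gamma_0$ to be constants --- every nonzero element of $\C(z)$ is invertible, so e.g.\ $\beta_0(z)=z$, $\gamma_0(z)=-a_0/z$ is consistent with that equation. Second, the claimed endpoint of the ``propagation'' is internally inconsistent: if $\beta=X^r-cX^{r-1}$ and $\gamma=X^s-c'X^{s-1}$ with $r\geq 2$ or $s\geq 2$, the product has zero $X^0$-coefficient, contradicting $-a_0\neq 0$; so whatever the intermediate coefficients force, it is not that. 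Already for $n=2$ the correct argument requires a divisibility analysis of rational functions: writing the degree-one right factor as $X-f(z)/g(z)$ with $\gcd(f,g)=1$, the polynomiality of $p$ forces $f(z)=\epsilon\, g(z+1)$, and then one checks that $p(z)=(\epsilon^2 g(z+2)-a_0 g(z))/(\epsilon\, g(z+1))$ is necessarily a \emph{constant}, which is where $\deg p\geq 1$ is used. For general $n$ and a factorization of lengths $r+s=n$ with $r,s\geq 2$ this analysis becomes substantially harder, and nothing in your sketch supplies it; this missing step is precisely the content that the paper's minor-degree argument provides.
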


Some previous results are required for the proof. 

%
%

\begin{prop}\label{prop:Smithtype1-submodule}
Let $V$ be a rank $n$ polynomial Casimir $\sl(2)$-representation of semi-level $\mu$, $V'$ be a rank $k$ polynomial $\sl(2)$-representation and $\psi:V'\hookrightarrow V$ an injective morphism of $\sl(2)$-modules. 

If $V'$ has Smith type $S_0(k,0)=(1, \overset{k)}\ldots,1)$, then $V/V'$ is torsion free.

If $V$ has Smith type $S_0(n,0)=(1, \overset{n)}\ldots,1)$, then $V/V'$ is a rank $n-k$ polynomial Casimir $\sl(2)$-representation of semi-level $\mu$ and $V'$ and $V/V'$ have  types $S_0(k,0)$ and $S_0(n-k,0)$ respectively. 
\end{prop}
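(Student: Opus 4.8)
The plan is to analyze the injection $\psi\colon V'\hookrightarrow V$ at the level of $\C[z]$-modules first and then bring in the $\sl(2)$-action. Pick bases so that $\psi$ is represented by a matrix $M(z)\in M_{n\times k}(\C[z])$ of rank $k$; its Smith normal form over $\C[z]$ yields invariant factors $d_1(z)\mid\cdots\mid d_k(z)$, and the torsion submodule of $V/V'$ is the direct sum of the $\C[z]/(d_i(z))$. So proving $V/V'$ torsion free amounts to showing every $d_i(z)$ is a unit, i.e. $\psi$ identifies $V'$ with a \emph{direct summand} of $V$ as a $\C[z]$-module. The first claim is where the hypothesis on $V'$ enters: I would argue that if some $d_i$ were nonconstant, then picking an irreducible factor $(z-c)$ of $d_i$ produces a nonzero torsion element $\bar v\in V/V'$ killed by $(z-c)$; pulling this back, one gets $v\in V$ with $(z-c)v\in V'$ but $v\notin V'$. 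Now use the $\sl(2)$-structure: applying $\rho(L_1)$ (which is semilinear, shifting $z\mapsto z+1$) to the relation $(z-c)v\in V'$ and using $\psi$-equivariance together with equation \eqref{eq:casimir-module} / the Smith type $S_0(k,0)$ of $V'$ — which says $\rho'(L_{-1})\rho'(L_1)=\pi_\mu(z)\Id$ with \emph{all} invariant factors of $\rho'(L_1)$ trivial, hence $\rho'(L_1)$ itself an isomorphism after inverting nothing, so $\rho'(L_{-1})=\pi_\mu(z)\rho'(L_1)^{-1}$ is already integral — I want to translate the membership $(z-c)v\in V'$ into a membership of $v$ itself in $V'$ at the shifted point, deriving a contradiction. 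The cleanest route: the torsion submodule $T$ of $V/V'$ is an $\sl(2)$-submodule (torsion is preserved by semilinear maps since $\nabla$ permutes the primes of $\C[z]$), so $T$ is a torsion Casimir $\sl(2)$-module of semi-level $\mu$; its support is a $\nabla$-stable finite set of points, and one shows using that $V$ has finite rank and $V'$ has Smith type $S_0(k,0)$ that this support must be empty. I expect invoking the explicit form of $E_\alpha$-type arguments from Proposition~\ref{prop:Bavula1} and the integrality of $\rho'(L_{-1}),\rho'(L_1)$ to pin this down.

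For the second statement, assume in addition that $V$ itself has Smith type $S_0(n,0)$, i.e. $\rho(L_1)$ is a $\C[z]$-isomorphism (equivalently $\rho(L_{-1})=\pi_\mu(z)\rho(L_1)^{-1}$ is integral and $\rho(L_1)=\pi_\mu(z+1)\rho(L_{-1})^{-1}$ is too). I would first upgrade the first part: since $V/V'$ is torsion free and finitely generated over the PID $\C[z]$, it is free, say of rank $n-k$; and the short exact sequence $0\to V'\to V\to V/V'\to 0$ of $\C[z]$-modules then \emph{splits}, so $V\simeq V'\oplus(V/V')$ as $\C[z]$-modules. The induced $\sl(2)$-structure on $V/V'$ is Casimir of semi-level $\mu$ automatically (the relations \eqref{eq:casimir-module} descend to the quotient), so it only remains to compute Smith types. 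For this I would use the multiplicativity-type relation: choosing a $\C[z]$-splitting, $\rho(L_1)$ becomes a block-triangular matrix with diagonal blocks $\rho'(L_1)$ (acting on $V'$) and $\bar\rho(L_1)$ (acting on $V/V'$), hence $\det\rho(L_1)=\det\rho'(L_1)\cdot\det\bar\rho(L_1)$ up to units. Since the $n$-th invariant factor of $\rho(L_1)$ is a unit (type $S_0(n,0)$), and invariant factors are divisors of $\det$, the top invariant factor of $\bar\rho(L_1)$ divides a unit, so it too is a unit; by Proposition~\ref{prop:SmithTypesStrat} the only Smith type with trivial top invariant factor on a rank-$(n-k)$ module is $S_0(n-k,0)$. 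The same divisibility argument forces $\rho'(L_1)$ to have all invariant factors units, i.e. type $S_0(k,0)$, completing the description.

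\textbf{Main obstacle.} The genuinely delicate point is the first assertion: ruling out torsion in $V/V'$ using only that $V'$ has Smith type $S_0(k,0)$ — the naive $\C[z]$-module argument gives nothing, since a submodule of a free module over $\C[z]$ certainly can have torsion quotient. One must exploit the interplay between the semilinear operators $\rho(L_{\pm1})$ and $\psi$-equivariance: the torsion submodule $T$ of $V/V'$ carries a torsion Casimir $\sl(2)$-structure, and I expect the crux is showing that the hypothesis $S_0(k,0)$ on $V'$ forces $\rho'(L_{\pm1})$ to act on $V'$ without "losing denominators", so that every element of $V$ mapping into $V'$ after multiplication by a polynomial already lies in $V'$. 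Concretely I would track a would-be generator of $T$ supported at a point $c\in\C$, apply $\rho(L_1)^m$ to walk $c$ along the $\nabla$-orbit $c+\Z$, and use that $V$ has finite rank together with the integrality of $\rho'(L_1)^{-1}$ on $V'$ to force the orbit-sum of local torsion lengths to be both positive and zero — the contradiction. Packaging this cleanly, rather than the bookkeeping itself, is the real work.
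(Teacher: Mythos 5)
Your plan for the first assertion is a genuinely different route from the paper's, but it has a gap at exactly the step you flag as ``the real work.'' The paper argues matricially: writing $\psi$ as $B(z)$ and the actions as $A_1(z)\circ\nabla$, $A_1'(z)\circ\nabla$, equivariance gives $A_1(z)B(z+1)=B(z)A_1'(z)$; the hypothesis $S_0(k,0)$ makes $A_1'(z)$ invertible over $\C[z]$, and Cauchy--Binet applied to the gcd $b(z)$ of the maximal minors of $B(z)$ yields $b(z+1)\mid b(z)$, forcing $b$ to be a nonzero constant and hence the cokernel free. Your alternative --- analyze the torsion submodule $T\subseteq V/V'$ as an $\sl(2)$-module --- can be made to work, but not by the mechanism you sketch: the support of $T$ is \emph{not} automatically $\nabla$-stable, because $\rho(L_{\pm 1})$ shift the $(z-c)$-primary component to the $(z-c\pm 1)$-primary component and, by \eqref{eq:casimir-module}, may fail to be injective precisely at the roots of $\pi_\mu$; nonzero finite-length torsion Casimir modules do exist (the modules $E_\alpha$ of Proposition~\ref{prop:Bavula1}), so ``finite $\nabla$-stable support must be empty'' is not available without using the $S_0(k,0)$ hypothesis. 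The missing step is: $S_0(k,0)$ makes $\rho'(L_1)=A_1'(z)\circ\nabla$ a $\C$-linear \emph{bijection} of $V'$, while $\rho(L_1)$ is injective on the torsion-free module $V$; the snake lemma applied to $0\to V'\to V\to V/V'\to 0$ then shows the induced map on $V/V'$ is injective, so it embeds each primary component $T_c$ into $T_{c-1}$ with no exceptional points, and finiteness of the length of $T$ forces $T=0$. You name the right ingredients (integrality of $\rho'(L_1)^{-1}$, the orbit walk) but do not supply this argument, so the first claim is not proved as written.

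The second assertion, as you organize it, is circular. There the hypothesis is on $V$ only; that $V'$ has type $S_0(k,0)$ is part of the \emph{conclusion}. Your route reads it off from a block-triangular form of $\rho(L_1)$ relative to a $\C[z]$-splitting $V\simeq V'\oplus(V/V')$, but that splitting exists only once $V/V'$ is known to be torsion free, which by your own part (1) requires $V'$ to already have type $S_0(k,0)$. The paper avoids this by deriving $\det A_1'(z)\in\C^\ast$ directly from $B(z+1)=A_1(z)^{-1}B(z)A_1'(z)$ (the gcd of maximal minors satisfies $b(z+1)=p(z)\,b(z)\det A_1'(z)$, forcing $\deg\det A_1'=0$), with no splitting; only then does it invoke part (1) and run the analogous minor computation for the surjection $V\to V/V'$. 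Your remaining steps --- freeness of the quotient, descent of the Casimir relations, and the identification of the quotient's type as $S_0(n-k,0)$ via Proposition~\ref{prop:SmithTypesStrat} --- are fine once the order of deductions is repaired.
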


\begin{proof}

Let $\{v'_1,\ldots, v'_k\}$ (resp. $\{v_1,\ldots, v_n\}$) be a basis of $V'$ (resp. $V$). Let $B(z)$ be the matrix associated to $\psi$ w.r.t. these basis. Following the notations and results of \S\ref{sec:polynomial}, let us identify $\rho_{V'}(L_1)$  with $A'_1(z)\circ \nabla$ acting on $\C[z]^k\simeq V'$ and $\rho_{V}(L_1)$ with $A_1(z)\circ \nabla$ acting on $\C[z]^n\simeq V$.
The fact that $\psi$ is a map of $\sl(2)$-modules yields
	\begin{equation}\label{eq:V'issubmodule}
	A_1(z) \circ \nabla \circ B(z) \, =\, B(z) A'_1(z)\circ \nabla
	\end{equation}

Assuming that $A'_1(z)$ has  Smith type $S_0(k,0)=(1, \overset{k)}\ldots,1)$, it follows that $ A'_1(z)$ is invertible in $M_{n\times k}(\C[z])$ and, thus, \eqref{eq:V'issubmodule} may be rewritten as  $A_1(z) B(z+1) A'_1(z) ^ {-1}= B(z) $. Let $b(z)$ be the greatest common divisor the $k\times k$-minors of $B(z)$ and note that  $b(z)\neq 0$ since $\psi$ is injective. From the previous identity and thanks to the generalized Cauchy-Binet formula for the minors of a product of matrices, it follows that there exists a polynomial $p(z)$ such that	
	$$
	p(z) b(z+1) \det(A'_1(z))^{-1} \,=\,  b(z) \qquad \text{in } \C[z]
	$$
for some polynomial $p(z)$. Thus  $b(z)\in \C\setminus\{0\}$ and, therefore, the cokernel of $\psi$ is free (as a $\C[z]$-module) and the claim is proved. 

Regarding the second claim,  let us now assume that  $A_1(z)$ has  Smith type $S_0(n,0)=(1, \overset{n)}\ldots,1)$. It implies that $A_1(z)\in\GL(n,\C[z])$ and, therefore equation~\eqref{eq:V'issubmodule} yields
	$$
	 B(z+1) \, =\, A_1(z)^{-1} B(z) A'_1(z) \qquad \text{in }M_{n\times k}(\C[z])
	$$
Analogously as above,  there exists a polynomial $p(z)$ such that	
	$$
	b(z+1)\,=\, p(z) b(z) \det(A'_1(z)) \qquad \text{in } \C[z]
	$$
and, thus, it follows that $\det(A'_1(z))\in\C\setminus\{0\}$. Recalling the definition of the Smith form, this condition implies that the Smith type of $V'$ is $S_0(k,0)=(1, \overset{k)}\ldots,1)$.

Note that the first part implies that $V/V'$ is torsion free and, by Theorem~\ref{thm:simpletorsionfreefingen-finiterankfree}, it is free as a $\C[z]$-module and therefore it is a  rank $n$ polynomial Casimir $\sl(2)$-representation of semi-level $\mu$. 

It remains to show that, under these conditions,  $V/V'$ has Smith type $S_0(n-k,0)=(1, \overset{n-k)}\ldots,1)$. The action on $\rho$ induces an action $\rho''$ on $V/V'\simeq \C[z]^{n-k}$. We write $\rho''(L_1)$ as $A''_1(z)\circ \nabla$. Let $C(z)\in M_{(n-k)\times k}(\C[z])$ be the matrix associated to the surjection $V\to  V/V'$. Then
	$$
	A''_1(z) \circ \nabla \circ C(z) \, =\, C(z) A_1(z)\circ \nabla
	$$
Observe that $\det (A_1(z))\in\C\setminus\{0\}$ since the Smith type of $V$ is $S_0(n,0)=(1, \overset{n)}\ldots,1)$. Moreover,  $c(z)$, the g.c.d. of the maximal minors of $C(z)$, is invertible since $V\to  V/V'$ is surjective. Arguing as above, there exists a polynomial $q(z)$ such that
	$$
	\det(A''_1(z)) c(z+1) q(z) \, =\, c(z)  \qquad \text{in } \C[z]
	$$
Hence, we obtain that $\det(A''_1(z))\in\C\setminus\{0\}$ and the claim is proved. 
\end{proof}

\begin{cor}
The full subcategory of $\sl(2)$-modules consisting of polynomial Casimir modules of  Smith type $S_0(n,0)$ for $n>0$ is an abelian category. Further, it is a Krull-Schmidt category.  
\end{cor}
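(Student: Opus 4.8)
The plan is to deduce the statement directly from Proposition~\ref{prop:Smithtype1-submodule}, using the identification of polynomial Casimir $\sl(2)$-modules of semi-level $\mu$ with free $\C[z]$-modules equipped with the extra data of $\rho(L_1)$, as set up in Section~\ref{sec:polynomial}. Fix the semi-level $\mu$ and let $\mathcal C$ be the full subcategory of $\sl(2)\mathrm{-Mod}$ whose objects are polynomial Casimir modules of Smith type $S_0(n,0)$ for some $n>0$, together with the zero module. First I would check that $\mathcal C$ is an additive subcategory of $\sl(2)\mathrm{-Mod}$: it is stable under finite direct sums since if $V$ has type $S_0(n,0)$ and $V'$ has type $S_0(n',0)$ then $\rho_V(L_1)\oplus\rho_{V'}(L_1)$ has associated matrix $A_1(z)\oplus A'_1(z)$ with determinant in $\C^*$, hence $V\oplus V'$ again has type $S_0(n+n',0)$; the zero object is present by convention. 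Morphisms between objects of $\mathcal C$ form abelian groups and composition is bilinear, so $\mathcal C$ is additive.

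The main point is that $\mathcal C$ is closed under kernels and cokernels inside $\C[z]\mathrm{-Mod}$ and that these are again $\sl(2)$-morphisms' kernels/cokernels. Let $\psi\colon V'\to V$ be a morphism in $\mathcal C$. Its kernel $K=\ker\psi$ is a $\C[z]$-submodule of $V'$, hence torsion free and of finite rank, therefore free; since $\rho_{V'}(L_1)$ restricts to $K$, it is an $\sl(2)$-submodule, and because $V'$ has Smith type $S_0(\cdot,0)$ the second part of Proposition~\ref{prop:Smithtype1-submodule} applied to the inclusion $K\hookrightarrow V'$ shows $K$ has Smith type $S_0(\mathrm{rk}\,K,0)$ (the argument there only uses that the ambient module has trivial Smith type). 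Dually, factor $\psi$ as $V'\twoheadrightarrow \Im\psi\hookrightarrow V$. The image $\Im\psi$ is a $\C[z]$-submodule of $V$, hence free, and an $\sl(2)$-submodule; applying the second part of Proposition~\ref{prop:Smithtype1-submodule} to $\Im\psi\hookrightarrow V$ gives that $\Im\psi$ has type $S_0(\cdot,0)$ and that $V/\Im\psi=\Coker\psi$ is again a polynomial Casimir representation of type $S_0(\cdot,0)$. One must also observe the standard fact that the coimage $V'/K$, computed in $\C[z]\mathrm{-Mod}$, maps isomorphically onto $\Im\psi$ as $\sl(2)$-modules (this uses torsion-freeness, exactly as in the proof of Proposition~\ref{prop:Smithtype1-submodule}), so the canonical map $\Coker(\ker\psi)\to\ker(\Coker\psi)$ is an isomorphism. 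Hence $\mathcal C$ is abelian.

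For the Krull--Schmidt property I would argue that every object of $\mathcal C$ has a local or at least semiperfect endomorphism ring, or more simply that $\mathcal C$ has finite-length-like behavior: an object $V$ of $\mathcal C$ of rank $n$ admits no infinite strictly descending chain of subobjects in $\mathcal C$, since every subobject is free of rank $\le n$ and a strict inclusion of free $\C[z]$-modules with $S_0$-type quotient strictly drops the rank (the quotient being free of positive rank). Thus every object has finite length in $\mathcal C$, and an abelian category in which every object has finite length is Krull--Schmidt: endomorphism rings of indecomposables are local by Fitting's lemma, and the classical Krull--Remak--Schmidt--Azumaya theorem applies. I expect the main obstacle to be the bookkeeping in verifying that the abelian structure of $\mathcal C$ is inherited from $\C[z]\mathrm{-Mod}$ rather than being some a priori different structure — that is, checking that kernels and cokernels formed in $\C[z]\mathrm{-Mod}$ genuinely carry the $\sl(2)$-action and land back in $\mathcal C$ — but this is precisely what Proposition~\ref{prop:Smithtype1-submodule} was engineered to supply, so the argument should go through smoothly once the reductions above are in place.
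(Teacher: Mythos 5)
Your proposal is correct and follows essentially the same route as the paper, which simply states that the result follows from Proposition~\ref{prop:Smithtype1-submodule} together with the finite-length property of the objects; you have merely spelled out the closure under kernels, images and cokernels and the Fitting/Krull--Remak--Schmidt--Azumaya step that the paper leaves implicit.
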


\begin{proof}
It follows easily from the previous Proposition and from the fact that the objects have finite length.
\end{proof}

\begin{prop}\label{prop:irred-rest} 
Let $\alpha=a_0(z)+a_1(z) X+\cdots+a_n(z) X^n\in\mathbb A^+$, where $n\geq 0$. 
If $a_0(z), a_n(z)\in\C\setminus\{0\}$, then $V_\alpha=\mathbb A/(\mathbb A\cap\mathbb B\alpha)$  is a free $\C[z]$-module of rank $n$ isomorphic to 
$\mathbb A/(\mathbb A\alpha)$ and it is a Casimir polynomial representation of semi-level $\mu$ and Smith type $S_0(n,0)=(1, \overset{n)}\ldots,1)$.
\end{prop}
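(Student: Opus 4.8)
The plan is to establish the three assertions in turn: that $V_\alpha$ is free of rank $n$, that $V_\alpha\simeq\mathbb A/(\mathbb A\alpha)$, and that its Smith type is $S_0(n,0)$. First I would settle freeness. Since $a_0(z),a_n(z)$ are nonzero constants, $\deg a_0(z)=\deg a_n(z)=0$, so in the description of $F_\alpha$ in Proposition~\ref{prop:Bavula2} the constraints $\deg p_i(z)<\deg a_0(z)$ and $\deg r_j(z)<\deg a_n(z)$ force every coefficient to vanish; hence $F_\alpha=0$. The decomposition \eqref{eq:directsumdecomp} then collapses to $\mathbb A=\mathbb A_{[n-1]}\oplus\mathbb A\alpha$, and as $\mathbb A\alpha$ is a left ideal it is stable under multiplication by $z\in\mathbb A_0$, so this is a direct sum of $\C[z]$-modules. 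Therefore $\mathbb A/\mathbb A\alpha\simeq\mathbb A_{[n-1]}=\bigoplus_{i=0}^{n-1}\C[z]\,L_1^i\simeq\C[z]^n$ is free of rank $n$, in particular $\C[z]$-torsion free. (This is also the content of the implication $(2)\Rightarrow(3)$ in Proposition~\ref{prop:submodule}.)

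Next I would prove $\mathbb A\cap\mathbb B\alpha=\mathbb A\alpha$, i.e.\ that $E_\alpha=(\mathbb A\cap\mathbb B\alpha)/\mathbb A\alpha$ vanishes. Here $\alpha$ is not assumed irreducible, so Proposition~\ref{prop:Bavula1} is not directly available; instead I would localize at $S=\C[z]\setminus\{0\}$. From the chain $\mathbb A\alpha\subseteq\mathbb A\cap\mathbb B\alpha\subseteq\mathbb B\alpha$ together with $S^{-1}(\mathbb A\alpha)=\mathbb B\alpha$ (every element of $\mathbb B$ is a left fraction $s^{-1}x$ with $s\in S$, $x\in\mathbb A$), all three terms localize to $\mathbb B\alpha$, so $S^{-1}E_\alpha=0$; that is, $E_\alpha$ is $S$-torsion. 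But $E_\alpha$ is a $\C[z]$-submodule of $\mathbb A/\mathbb A\alpha$, which is torsion free by the first step, whence $E_\alpha=0$ and $V_\alpha=\mathbb A/(\mathbb A\cap\mathbb B\alpha)=\mathbb A/\mathbb A\alpha$, free of rank $n$ over $\C[z]$. (Equivalently, $S^{-1}(\mathbb A/\mathbb A\alpha)\simeq\mathbb B/\mathbb B\alpha$ has $\C(z)$-dimension $\operatorname{length}_X(\alpha)=n$ by Lemma~\ref{lem:findim}, matching the rank, so the surjection $\mathbb A/\mathbb A\alpha\twoheadrightarrow V_\alpha$ is an isomorphism after localization.)

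Finally, $V_\alpha$ is a module over $\mathbb A=U/U(C-(2\mu+1)^2)$, so $C$ acts on it by $(2\mu+1)^2$ and $V_\alpha$ is a Casimir module of semi-level $\mu$; combined with freeness this makes it a polynomial Casimir representation of semi-level $\mu$ and rank $n$. For the Smith type I would use the $\C[z]$-basis $\overline{1},\overline{X},\dots,\overline{X^{n-1}}$ of $\mathbb A/\mathbb A\alpha$ produced by the first step. The operator $\rho(L_1)$ is left multiplication by $X$, so $\rho(L_1)\bigl(\sum_i f_i(z)X^i\bigr)=\sum_i f_i(z+1)X^{i+1}$, and reducing the top term via $a_n\,\overline{X^{n}}=-\sum_{j=0}^{n-1}a_j(z)\,\overline{X^{j}}$ (which is just $\overline{\alpha}=0$) yields $\rho(L_1)=A_1(z)\circ\nabla$, where $A_1(z)$ is the companion-type matrix with $1$'s on the subdiagonal and last column $\bigl(-a_n^{-1}a_0,\,-a_n^{-1}a_1(z),\dots,-a_n^{-1}a_{n-1}(z)\bigr)^{t}$. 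Expanding along the first row gives $\det A_1(z)=(-1)^n a_n^{-1}a_0\in\C^*$, so $A_1(z)\in\GL(n,\C[z])$ and its Smith normal form is $\Id$; by Lemma~\ref{lem:invariantfactor} and the definition of the Smith type, $S(\rho)=S(A_1(z))=\Id=S_0(n,0)$, as asserted.

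I expect the only genuinely delicate point to be the vanishing of $E_\alpha$ in the second step, precisely because irreducibility of $\alpha$ is not assumed and so the extraction of $E_\alpha=0$ must go through the localization comparison (or, equivalently, the dimension count of Lemma~\ref{lem:findim}); the first and third steps are essentially bookkeeping with Proposition~\ref{prop:Bavula2} and with the companion matrix of $L_1$.
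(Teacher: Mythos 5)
Your proof is correct and, for the freeness and Smith-type parts, follows essentially the same route as the paper: Proposition~\ref{prop:submodule} (equivalently, the vanishing of $F_\alpha$ via Proposition~\ref{prop:Bavula2}) gives $\mathbb A/(\mathbb A\alpha)\simeq\C[z]^n$, and the companion matrix of left multiplication by $X$ in the basis $1,X,\dots,X^{n-1}$ has constant nonzero determinant, which forces Smith type $S_0(n,0)$ by Theorem~\ref{thm:polynomialmuCasimir}. The one place where you genuinely diverge is your second step. The paper simply asserts, citing Proposition~\ref{prop:submodule}, that $V_\alpha\simeq\mathbb A/(\mathbb A\alpha)$; but that proposition only concerns $\mathbb A/(\mathbb A\alpha)$, and the exact sequence of Proposition~\ref{prop:Bavula1} identifying the difference as the torsion module $E_\alpha$ is stated only for irreducible $\alpha$, which is not assumed here. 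Your localization argument --- $S^{-1}(\mathbb A\alpha)=S^{-1}(\mathbb A\cap\mathbb B\alpha)=\mathbb B\alpha$, hence $E_\alpha$ is $S$-torsion, hence zero inside the torsion-free module $\mathbb A/(\mathbb A\alpha)$ --- supplies exactly the justification the paper elides, and does so without any irreducibility hypothesis. So your write-up is not just correct but slightly more complete than the paper's own proof; the remaining steps match it.
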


\begin{proof} If $a_0(z), a_n(z)\in\C$, then by  Proposition~\ref{prop:submodule}  one has that $V_{\alpha}\simeq  \mathbb A/(\mathbb A\alpha) $ is a rank $n$ polynomial representation. 

Now, using the notations of the proof of Proposition~\ref{prop:Smithtype1-submodule} with respect to the basis $1,X,\ldots, X^{n-1}$,  it follows that $A_1(z) $ is a companion matrix; namely
	{\small $$
	A_1(z)\,=\, { \footnotesize\begin{pmatrix} 
		0 & & \dots &  -\frac{a_0(z)}{a_n(z)}  
		\\ 
		1 & 0 &   &    0 
		 \\ 
		 \vdots  & & \ddots  & \vdots \\ 
		 0 & \dots &  1 & -\frac{a_{n-1}(z)}{a_n(z)} 
		 \end{pmatrix}}\,\in\, M_{n\times n}(\C[z])
	$$}and, thus, $\det(A_1(z))\in \C\setminus\{0\}$. Having in mind that the invariant factors of $A_1(z)$ divide $\det(A_1(z))$ and applying Theorem~\ref{thm:polynomialmuCasimir}, it follows that $V$ is a Casimir polynomial representation of semi-level $\mu$ and Smith type $S_0(n,0)=(1, \overset{n)}\ldots,1)$.
\end{proof}

\begin{proof}[Proof of Theorem~\ref{thm:newsimpletorsionfree}]
By Proposition~\ref{prop:submodule}  we know that  $V:= {\mathbb A}/({\mathbb A}\alpha)$ is a  free $\C[z]$-module of rank $n$. 
Since $\C[z]$ is a p.i.d. and $V$ is free of finite rank, it follows that any $\C[z]$-submodule of $V$ is free of finite rank. Hence, let us assume that there exists a non-trivial $\sl(2)$-submodule $V'\simeq \C[z]^k \hookrightarrow V:= {\mathbb A}/{\mathbb A}\alpha$ with $k\leq n$. Now, Proposition~\ref{prop:Smithtype1-submodule} shows that $V'$ has Smith type $(1,\overset{k)}\ldots, 1)$ and, in particular, $\det(A'_1(z)) \in\C\setminus\{0\}$ where $\rho(L_1)\vert_{V'}$ is expressed as $A'_1(z)\circ\nabla$ w.r.t. a basis. 

We also obtain an analogous to the equation~\eqref{eq:V'issubmodule}; namely, the matrix identity $A_1(z) B(z+1) = B(z)A'_1(z)$ where $A_1(z)$ is the companion matrix of $\alpha$ w.r.t. the basis $v_1:=1,v_2:=X,\ldots, v_{n}:=X^{n-1}$
		{\small $$
	A_1(z)\,=\, { \footnotesize\begin{pmatrix} 
		0 & & \dots &  a_0 
		\\ 
		1 & \ddots &   &    0 
		 \\ 
		 \vdots  & & 0  & \vdots \\ 
		 0 & \dots &  1 & p(z)
		 \end{pmatrix}}\,\in\, M_{n\times n}(\C[z])
	$$}(since $\rho(L_1)$ is the left multiplication by $X$) and $B(z)$ is the matrix associated to $V'\hookrightarrow V $ 
	{\small $$
	B(z)\,=\, { \footnotesize\begin{pmatrix} 
	b_{11}(z) &  \ldots & b_{1k}(z) \\  
	\vdots & & \vdots \\  
	b_{n1}(z) &  \ldots & b_{nk}(z)  \end{pmatrix}}
	$$}

The $k$-th exterior product of the matrix identity $A_1(z) B(z+1) = B(z)A'_1(z)$, gives	
	\begin{equation}\label{eq:wedgekA1}
	\wedge^k A_1(z) \Big(\sum_{I\in {\mathcal I}} b_I(z+1) v_I\Big) \,=\, \lambda \Big(\sum_{I\in {\mathcal I}} b_I(z) v_I\Big) 
	\end{equation}
where ${\mathcal I}$ is the set of multi-indexes $ i_1<\ldots < i_k$ with $1\leq i_1, i_k\leq n$, $ b_I(z)$ denotes the minor of $B(z)$ corresponding to the rows $i_1,\ldots, i_k$,  $v_I := v_{i_1}\wedge\ldots\wedge v_{i_k}$ for $I\in{\mathcal I}$ and $v_I := 0$ for any other multi-index not lying in ${\mathcal I}$, and $\lambda=\det(A'_1(z))$. 

Note that $\wedge^kA_1(z)$ acts as follows
	{\small $$
	\big(\wedge^kA_1(z)\big) v_I\,=\,
	\begin{cases}
	v_{\sigma(I)}  & \text{ for } i_k< n
	\\
	(-1)^{k-1} a_0 v_{\sigma(I)}+p(z) v_{\tau(I)} & \text{ for } i_k= n
	\end{cases}
	$$}
where $I\equiv i_1<\ldots <i_k$, 
	$$
	\begin{aligned}
	\sigma:\, &{\mathcal I}\overset{\sim}\to{\mathcal I} \\
	& I \mapsto \, \sigma(I):= 
		\begin{cases} i_1+1<\ldots <i_k+1 & \text{ for } i_k< n \\ 
		1<i_1+1<\ldots< i_{k-1}+1 & \text{ for } i_k=n  \end{cases}
	\end{aligned}
	$$
and 
	$$
	\begin{aligned}
	\tau:\, &{\mathcal I}\to{\mathcal I}\cup \{0\} \\
	& I \mapsto \, \tau (I):= 
		\begin{cases} i_1+1<\ldots <i_{k-1}+1<n & \text{ for } i_{k-1}<n-1, i_k=n,  \\ 
		0 & \text{ otherwise}  \end{cases}
	\end{aligned}
	$$

Looking at the coefficients of $v_I$ in the identity~\eqref{eq:wedgekA1}, we obtain 	
	$$
	\pm a_0\,b_{\sigma^{-1}(I)}(z+1) \, +\, p(z) b_{\tau^{-1}(I)}(z+1)  \,=\, \lambda b_I(z) 
	$$
where,  by convention we set $b_{\emptyset}=0$. Denote $d_I:=\operatorname{deg}(b_I(z))$ for $I\in {\mathcal I}$ and $d_{\emptyset} := -\infty$. Then, the previous equation yields  
	$$
	\begin{aligned}
	d_{\sigma^{-1}(I)} \,=\, d_I \qquad & \text{ if } \tau^{-1}(I)=\emptyset
	\\
	 d_{J} \,<\, \operatorname{deg}(p(z)) + d_{J} \,\leq\, \operatorname{max}\{ d_I, d_{\sigma^{-1}(I)}\}  
	\qquad& \text{ if }  \tau(J)=I
	\end{aligned}
	$$
For $I\equiv i_1<\ldots < i_k \in {\mathcal I}$, we set $I(j):=i_j$. The relations above imply the following properties
	\begin{enumerate}
		\item if $d_I$ is maximal in the set $\{d_I\vert I\in{\mathcal I}\}$, then $\tau(I)=0$ and, therefore, either $I(n)<n$ or $I(n-1)=n-1$;
		\item if $I(1)\geq 2$ and $I(n)<n$, then $\tau^{-1}(I)=\emptyset$ and, thus, $d_I=d_{\sigma^{-1}(I)}$;
		\item if $I(1)=1$, then $\tau^{-1}(I)=\emptyset$ and, thus, $d_I=d_{\sigma^{-1}(I)}$.
	\end{enumerate}

Let $I$ be a multiindex $I\equiv i_1<\ldots <i_k$ such that $d_I$ is maximal among all $d$'s. By (1) it holds that either $I(n)<n$ or $I(n-1)=n-1$. 

Let us deal with the first case; that is $I(n)<n$. Applying (2) $i_1-1$ times, one has that $d_{\sigma^{-(i_1+1)}(I)}$ is maximal, or, what is tantamount, we may assume that the maximum is attained at some $I$ with $I(1)=1$. Item (3) shows that   $d_I=d_{\sigma^{-1}(I)}$. Noting that $\sigma^{-1}(I)(n)=n$ and having in mind (1), it follows that $\sigma^{-1}(I)(n-1)=n-1$. On the other hand, $\sigma^{-1}(I)(n-1)=I(n)-1$ and, hence, $I(n)=n$. That is,  the maximum is also attained at some $I$ with $I(n)=n$. 

Hence, let us now deal with the case $I(n)=n$; and, by (1), $I(n-1)=n-1$.  Observe that $\sigma(I)(1)=1$, $\sigma(I)(n)=n$ and   $\tau^{-1}(\sigma(I))=\emptyset$. 
Thus,   $d_I=d_{\sigma(I)}$ is maximal and, by (1),  $\sigma(I)(n-1)=n-1$. On the other hand $\sigma(I)(n-1)=I(n-2)+1$; i.e. $I(n-2)=n-2$. Applying this recursively, one obtains that $\sigma^{k-1}(I)=(1,\ldots,k-1,n)$ and $d_{\sigma^{k-1}(I)}$ is maximal. By (1) we get $k=n$ and thus $I(j)=j$ for all $1\leq j\leq n$. Therefore  $V'\simeq V$ since by Proposition \ref{prop:Smithtype1-submodule} it follows that $V/V'$ is a torsion free $\C[z]$-module of rank $0$. 
\end{proof}



\end{document}